\documentclass[11pt,reqno,a4paper]{amsart}
\usepackage{a4wide,verbatim}

\title[Equivariant minimal surfaces]{Equivariant minimal surfaces in $\CH^2$ and their Higgs bundles.}
\author{John Loftin}
\address{Department of Mathematics and Computer Science\\ Rutgers-Newark\\
Newark, NJ 07102, USA}
\email{loftin@rutgers.edu}

\author{Ian McIntosh}
\address{Department of Mathematics\\ University of York\\ 
York YO10 5DD, UK}
\email{ian.mcintosh@york.ac.uk}
\subjclass{20H10, 53C43, 58E20}
\date{March 12, 2018}


\newcommand{\Z}{\mathbb{Z}}

\newcommand{\C}{\mathbb{C}}

\newcommand{\R}{\mathbb{R}}

\newcommand{\CP}{\mathbb{CP}}
\newcommand{\CH}{\mathbb{CH}}
\newcommand{\RH}{\mathbb{RH}}
\newcommand{\RP}{\mathbb{RP}}
\newcommand{\CdS}{\mathbb{C}\mathrm{dS}}
\renewcommand{\P}{\mathbb{P}}

\newcommand{\caC}{\mathcal{C}}
\newcommand{\caD}{\mathcal{D}}
\newcommand{\caE}{\mathcal{E}}
\newcommand{\caF}{\mathcal{F}}

\newcommand{\caH}{\mathcal{H}}

\newcommand{\caL}{\mathcal{L}}

\newcommand{\caN}{\mathcal{N}}
\newcommand{\caO}{\mathcal{O}}
\newcommand{\caP}{\mathcal{P}}
\newcommand{\caQ}{\mathcal{Q}}
\newcommand{\caR}{\mathcal{R}}

\newcommand{\caT}{\mathcal{T}}
\newcommand{\caU}{\mathcal{U}}
\newcommand{\caV}{\mathcal{V}}
\newcommand{\caW}{\mathcal{W}}

\newcommand{\fF}{\mathfrak{F}}

\newcommand{\su}{\mathfrak{su}}

\newcommand{\fg}{\mathfrak{g}}

\newcommand{\fh}{\mathfrak{h}}
\newcommand{\fm}{\mathfrak{m}}

\newcommand{\fu}{\mathfrak{u}}
\newcommand{\fz}{\mathfrak{z}}

\newcommand{\End}{\operatorname{End}}

\newcommand{\Hom}{\operatorname{Hom}}

\newcommand{\Jac}{\operatorname{Jac}}
\newcommand{\Pic}{\operatorname{Pic}}
\newcommand{\Isom}{\operatorname{Isom}}

\renewcommand{\Re}{\operatorname{Re}}
\renewcommand{\Im}{\operatorname{Im}}

\newcommand{\Kah}{K\" ahler\ }

\newcommand{\Ad}{\operatorname{Ad}}

\newcommand{\tr}{\operatorname{tr}}
\newcommand{\rk}{\operatorname{rk}}

\newcommand{\g}[2]{\langle{#1},{#2}\rangle}

\newcommand{\II}{\mathrm{I\! I}}

\newcommand{\Ric}{\operatorname{Ric}}

\newcommand{\grad}{\operatorname{grad}}
\newcommand{\Area}{\operatorname{Area}}
\newcommand{\vol}{v}
\newcommand{\0}{\mathbf{0}}
\newcommand{\Hess}{\operatorname{Hess}}

\newtheorem{thm}{Theorem}[section]
\newtheorem{prop}[thm]{Proposition}
\newtheorem{lem}[thm]{Lemma}
\newtheorem{cor}[thm]{Corollary}

\newtheorem{defn}[thm]{Definition}
\theoremstyle{remark}

\newtheorem{rem}{Remark}[section]

\numberwithin{equation}{section}

\begin{document}

\begin{abstract}
This paper gives a construction for all minimal immersions $f$ of the Poincar\'{e} disc into the complex hyperbolic
plane $\CH^2$  which are equivariant with respect to an irreducible representation $\rho$ of a hyperbolic surface group 
into $PU(2,1)$. We 
exploit the fact that each such immersion is a twisted conformal harmonic map and therefore has a corresponding 
Higgs bundle. We identify the structure of these Higgs bundles and show how each is determined by properties of the map, including 
the induced metric and a holomorphic cubic differential on the surface.
We show that the moduli space of pairs $(\rho,f)$ is a disjoint union of finitely many complex manifolds, whose structure 
we fully describe. The holomorphic (or anti-holomorphic) maps provide multiple components of this union, as do the 
non-holomorphic maps. Each of the latter components has the same dimension as the
representation variety for $PU(2,1)$, and is indexed by the number of complex and anti-complex points of the immersion.
These numbers determine the Toledo invariant and the Euler number of the normal bundle of the immersion.
We show that there is an open set of quasi-Fuchsian representations of Toledo
invariant zero for which the minimal surface is unique and Lagrangian.
\end{abstract}

\maketitle

\section{Introduction}
In this article we provide a complete classification of $\rho$-equivariant minimal immersions $f:\caD\to\CH^2$ and a
parametrisation for their moduli space as a union of complex manifolds. Here $\caD$ is the Poincar\'{e} disc and $\rho$ is an 
irreducible (or more generally, reductive) representation of a 
hyperbolic surface group (i.e., the fundamental group $\pi_1\Sigma$ of a closed orientable surface
$\Sigma$ of genus at least two) into the group $PU(2,1)=U(2,1)/\text{centre}$. 
Recall that a minimal immersion of a surface is the same thing as a conformal harmonic map. 
To say $f$ is $\rho$-equivariant means it intertwines the action of a Fuchsian group on $\caD$ with the action of 
$\rho$ on the complex hyperbolic plane $\CH^2$ by holomorphic isometries. 
One can also think of $f$ as a section of a $\CH^2$-bundle over $\Sigma$ or, when $\rho$ is a discrete
embedding, as a minimal immersion $f:\Sigma\to \CH^2/\rho$ into the quotient manifold. 

We classify these pairs $(\rho,f)$ up to $PU(2,1)$-equivalence, i.e., up to the natural
left action of $PU(2,1)$ by conjugation of $\rho$ and the simultaneous ambient
isometry of $f$. In particular, the space of such pairs has a natural ``forgetful'' map to the moduli space
\[
\caR(G) = \Hom^+(\pi_1\Sigma,G)/G,
\]
of conjugacy classes of reductive representations into $G=PU(2,1)$. Recall that
this is a real analytic variety whose connected components are indexed by the
Toledo invariant $\tau(\rho)\in\tfrac23\Z$, $|\tau(\rho)|\leq -\chi(\Sigma)$ (see, e.g., \cite{Gol}). We show that
there are families of pairs for every value of $\tau$.

We achieve this classification by exploiting 
the powerful machinery which links equivariant harmonic maps to the Yang-Mills-Higgs 
equations over a compact Riemann surface and thereby to Higgs bundles \cite{Don,Hit87,Cor,Sim}, 
Our starting point is 
two facts: 
(i) to each irreducible $\rho$ and marked
conformal structure on $\Sigma$ (i.e., conjugacy class of Fuchsian representations) there is a $\rho$-equivariant 
harmonic map $f$ \cite{Don,Cor}; (ii) when this information is encoded into a $G$-Higgs bundle $(E,\Phi)$ 
$f$ is weakly conformal precisely when $\tr\Phi^2=0$. It is surprising that this approach has not, to our knowledge,
been exploited before, since it provides a very effective way to understand the moduli as holomorophic data and avoids
a direct analysis of the Gauss-Codazzi equations for minimal surfaces (cf.\ \cite{Tau}).

When $G=PU(2,1)$ the structure of the Higgs bundles and their moduli space is quite well understood \cite{Xia,Got,BraGG}. 
In this case the bundle $E$ splits into a sum $V\oplus L$
of a rank two sub-bundle $V$ and a rank one sub-bundle $L$ which are mapped to each other by the Higgs field, i.e.,
we can write $\Phi=(\Phi_1,\Phi_2)$ where $\Phi_1:L\to KV$ and $\Phi_2:V\to KL$ for $K$ the canonical bundle determined
by the marked conformal structure. In fact by projective equivalence we may assume that $L=1$, the trivial
bundle. The Higgs field $\Phi$ corresponds to the differential of $f$. To be
precise, it corresponds to 
\[
\partial f:T^{1,0}\caD\to T^\C\CH^2,
\]
and the components $\Phi_1,\Phi_2$ correspond to the components of $\partial f$ with respect to the type decomposition of
$T^\C\CH^2$. For a minimal surface there are two possibilities: (i) $f$ is holomorphic ($\Phi_2=0$) or anti-holomorphic
($\Phi_1=0$), or (ii) $f$ is neither and has isolated complex and anti-complex points which give finite divisors $D_2$ and
$D_1$ over $\Sigma$ (where, respectively, $\Phi_2$ and $\Phi_1$ have zeroes). We treat these two possibilities separately,
and in fact it is the latter which we treat first, in \S 2 and \S 3. An important role is played by a cubic
holomorphic differential $\caQ$ which can be naturally assigned to any minimal surface in a \Kah manifold of constant
holomorphic section curvature \cite{Woo84}. It vanishes identically for holomorphic or anti-holomorphic immersions 
(but not only for them). When $f$ is neither holomorphic nor anti-holomorphic
we show that, with the two bilinear forms $\gamma_j=\tfrac12\tr\Phi_j\Phi_j^\dagger$, which carry the information of the
metric $\gamma$ induced by $f$, the data $(\gamma_1,\gamma_2,\caQ)$ completely determines the minimal immersion up to
ambient isometries. The principal results of \S 2 and \S 3 (Theorems \ref{thm:minimal} and \ref{thm:xi})
can be summarised as follows.
\begin{thm}\label{thm:nonhol}
Let $\rho$ be irreducible and $f$ be minimal and $\rho$-equivariant. If $f$ is neither holomorphic nor anti-holomorphic
then the pair $(\rho,f)$ is faithfully determined, up to $G$-equivalence, by data $(\Sigma_c,D_1,D_2,\xi)$ where: $c$ is a
marked conformal structure on $\Sigma$, $D_1$ and $D_2$ are effective divisors on $\Sigma$ whose degrees $d_1,d_2$ satisfy 
\[
2d_1+d_2<6(g-1) \text{ and } d_1+2d_2<6(g-1),
\]
and $\xi\in H^1(\Sigma_c,K^{-2}(D_1+D_2))$ represents an extension class which determines $V$ as an extension of $K^{-1}(D_1)$
by $K(-D_2)$. The Higgs bundle is then $E=V\oplus 1$ equipped with a Higgs field determined by the extension class. Under 
the Dolbeault isomorphism this extension class corresponds to the cohomology class of
$-\bar\caQ/\gamma_1\gamma_2$ in $H^{0,1}(\Sigma_c,K^{-2}(D_1+D_2))$, and $\xi=0$ if and only if $\caQ=0$. In this
correspondence, $\rho$ has Toledo invariant $\tau(\rho)=\tfrac23(d_2-d_1)$, and the Euler number of the normal bundle of
$f$ is $\chi(T\Sigma^\perp) = 2(g-1) -d_1-d_2$.
\end{thm}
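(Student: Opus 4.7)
My plan is to extract the data $(\Sigma_c,D_1,D_2,\xi)$ from $(\rho,f)$ via the non-abelian Hodge correspondence, derive the stability inequalities, identify the Dolbeault class of $\xi$ with the cubic differential, and then invert the construction. First I would pull back the complex structure of $\CH^2$ along $f$ to obtain the marked conformal structure $c$, and apply Corlette-Donaldson to associate to $(\rho,c)$ a polystable $G$-Higgs bundle $(E=V\oplus 1,\,\Phi=(\Phi_1,\Phi_2))$ of the shape recalled in the introduction (strictly stable, since $\rho$ is irreducible). Because $f$ is neither holomorphic nor anti-holomorphic both $\Phi_j\ne 0$, and I set $D_j=\operatorname{div}(\Phi_j)$. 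Tracing $\tr\Phi^2=0$ on $E=V\oplus 1$ reduces the minimality condition to the scalar identity $\Phi_2\circ\Phi_1=0$ in $H^0(\Sigma_c,K^2)$. This forces the saturated image of $\Phi_1$, namely $\caO(D_1)\hookrightarrow KV$, to lie in $\ker\Phi_2\otimes K$, so $K^{-1}(D_1)\subset\ker\Phi_2\subset V$; nonvanishing of $\Phi_2$ promotes this to equality and identifies the quotient $V/K^{-1}(D_1)$ with $K(-D_2)$, giving the short exact sequence
\[
0\to K^{-1}(D_1)\to V\to K(-D_2)\to 0
\]
of class $\xi\in H^1(\Sigma_c,K^{-2}(D_1+D_2))$.

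Next I would derive the inequalities by testing stability on two natural $\Phi$-invariant subbundles of $E$. The subbundle $K^{-1}(D_1)\subset V\subset E$ is killed by $\Phi_2$ and is not in the image of $\Phi_1$, so the slope inequality $\mu(K^{-1}(D_1))<\mu(E)=(d_1-d_2)/3$ rearranges to $2d_1+d_2<6(g-1)$. The subbundle $K^{-1}(D_1)\oplus 1\subset E$ is also $\Phi$-invariant, since $\Phi_1(1)$ lands exactly in $\caO(D_1)=K\cdot K^{-1}(D_1)\subset KV$ and $\Phi_2$ annihilates $K^{-1}(D_1)$; its slope inequality rearranges to $d_1+2d_2<6(g-1)$.

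For the identification of the Dolbeault class of $\xi$ with that of $-\bar\caQ/\gamma_1\gamma_2$ I would use the harmonic metric $h$ from Hitchin's equations to produce a smooth orthogonal splitting $V=K^{-1}(D_1)\oplus K(-D_2)$, so that the Dolbeault representative of $\xi$ is the $(0,1)$-part of the second fundamental form of $K^{-1}(D_1)\hookrightarrow V$. A direct calculation, using the adjoints $\Phi_j^\dagger$ and the definitions $\gamma_j=\tfrac12\tr\Phi_j\Phi_j^\dagger$, will express this second fundamental form in terms of the $\Phi_j$, the $\gamma_j$, and their covariant derivatives, and I would then recognise the result as Wood's cubic differential formula for a minimal surface in a \Kah space form. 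I expect this gauge-theoretic-to-geometric dictionary to be the main technical step of the theorem; the equivalence $\xi=0\Leftrightarrow\caQ=0$ then follows immediately.

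Finally, for the inverse construction, given data $(\Sigma_c,D_1,D_2,\xi)$ satisfying the inequalities, $\xi$ reconstructs $V$ as an extension, the conditions $(\Phi_j)_0=D_j$ determine $\Phi_1,\Phi_2$ up to a $\C^\times$-scaling absorbed by the gauge action, and the inequalities guarantee stability; non-abelian Hodge then returns $(\rho,f)$ unique up to $G$-equivalence, and the constraint $\tr\Phi^2=0$ holds by construction so $f$ is minimal. The Toledo formula $\tau(\rho)=\tfrac23(d_2-d_1)$ is the standard identity $\tau=-\tfrac23\deg V$ applied to $\deg V=d_1-d_2$, while the Euler number of the normal bundle follows from $c_1(f^*T^{1,0}\CH^2)=\chi(T\Sigma)+\chi(T\Sigma^\perp)$ together with the identification of $c_1(f^*T^{1,0}\CH^2)$ in terms of $D_1,D_2$ read off from the Higgs filtration.
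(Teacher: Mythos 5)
Your extraction of the Higgs bundle, the reduction of $\tr\Phi^2=0$ to $\Phi_2\circ\Phi_1=0$, the resulting extension \eqref{eq:V}, the stability inequalities read off from the two $\Phi$-invariant subbundles $K^{-1}(D_1)$ and $K^{-1}(D_1)\oplus 1$, and the reverse construction all coincide with the paper's proof of Theorem \ref{thm:minimal}. Your strategy for Theorem \ref{thm:xi} --- realising the Dolbeault representative of $\xi$ as the $(0,1)$-second fundamental form of $K^{-1}(D_1)\subset V$ for the harmonic metric --- is also the route the paper takes, implemented there via explicit local Toda frames in which the off-diagonal entry $-\bar Q/u_1u_2$ of $\bar\partial_E$ in \eqref{eq:dbarE} is exactly that second fundamental form. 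But you have left the whole computation as a black box, including the non-trivial point that $-\bar\caQ/\gamma_1\gamma_2$ extends smoothly as a section of $\caE^{0,1}(K^{-2}(D_1+D_2))$ across the points of $D_1+D_2$, where $\gamma_1\gamma_2$ vanishes; the paper's twisted frames and the local primitives $a_j$ exist precisely to handle this, and it is where most of the work of \S 3 lives.

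Two steps are genuinely wrong or missing. First, ``the equivalence $\xi=0\Leftrightarrow\caQ=0$ then follows immediately'' is only half true: identifying the representative gives $\caQ=0\Rightarrow\xi=0$ at once, but the converse requires showing that $-\bar\caQ/\gamma_1\gamma_2$ cannot be $\bar\partial$-exact without vanishing. The paper proves this by showing the form is the harmonic representative of its class for the Hodge inner product built from the metric $\gamma_1\gamma_2$ on $K^{-2}(D_1+D_2)$; without some such argument $\xi=0\Rightarrow\caQ=0$ is unproved. Second, your derivation of the normal bundle Euler number rests on $c_1(f^{-1}T'\CH^2)=\chi(T\Sigma)+\chi(T\Sigma^\perp)$, which is the adjunction identity for holomorphic curves and fails for a non-holomorphic minimal immersion: the splitting $f^{-1}T\CH^2=T\Sigma\oplus T\Sigma^\perp$ is not $J$-invariant, and the two topological identities \eqref{eq:c_1} and \eqref{eq:chi}, namely $c_1(\rho)=d_1-d_2$ and $\chi(\Sigma)+\chi(T\Sigma^\perp)=-d_1-d_2$, are independent. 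Your formula would give $\chi(T\Sigma^\perp)=2(g-1)+d_1-d_2$, which differs from the stated $2(g-1)-d_1-d_2$ by $2d_1$.
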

In particular, the integers $d_1,d_2$ determine, and are determined by, the Toledo invariant of
$\rho$ and the Euler number of the normal bundle of $f$.

The correspondence is constructive in the sense that, given data $(\Sigma_c,D_1,D_2,\xi)$ satisfying the conditions above,
we give an explicit construction of a stable Higgs bundle $(E,\Phi)$ with $\tr\Phi^2=0$.
The divisors $D_1,D_2$ are precisely the divisors of anti-complex and complex points of the minimal immersion. For $f$ to be
strictly an immersion they must have no common points, but the construction still works to produce branched minimal
immersions if they intersect, with branch points at the intersections. 

This construction includes and greatly extends the construction of minimal Lagrangian embeddings we gave in
\cite{LofM13}. Indeed, up to this time we were not aware of any other examples of non-holomorphic equivariant 
minimal immersions into $\CH^2$ (save for the Lagrangian examples whose existence is a consequence of the ``mountain-pass''
solutions to the Gauss equation described in \cite{HuaLL}). The theorem above not only gives all
examples for reductive representations (when we allow branch points) but allows us to describe the structure of the 
moduli space of these as a complex manifold (Theorem \ref{thm:moduli} below).

By a theorem of Wolfson \cite{Wol89}, $f$ is Lagrangian precisely when it has no complex or anti-complex points,
and is therefore parametrised by the pair $(\Sigma_c,\xi)$. 
The embeddings constructed in \cite{LofM13} all have the property
that the exponential map on the normal bundle provides a diffeomorphism between $T\caD^\perp$ and $\CH^2$. It follows that
$\rho$ has a finite fundamental
domain (given by the normal bundle over a finite fundamental domain for the action of $\pi_1\Sigma$
on $\caD$). Here we call such embeddings \emph{almost $\R$-Fuchsian}, because they are deformations of
the embedding $\RH^2\to\CH^2$, which is equivariant with respect to every Fuchsian representation into $SO(2,1)\subset
PU(2,1)$. In \S 4 we improve on the results in \cite{LofM13} by showing that whenever $f$ is 
minimal Lagrangian with
$\|\caQ\|^2_\gamma< 2$ it is almost $\R$-Fuchsian and the unique $\rho$-equivariant minimal immersion.
The $\R$-Fuchsian case corresponds to $\caQ=0$, and therefore $\xi=0$ by the theorem above.  
The uniqueness of $f$ proved here implies that the
data $(\Sigma_c,\xi)$ also parametrises the almost $\R$-Fuchsian family, although at present 
we do not understand the appropriate bound on $\xi$. It is preferable to have the parametrisation in terms of
$(\Sigma_c,\xi)$ since that gets us directly to the Higgs bundle and therefore to $\rho$. The parametrisation in
\cite{LofM13} using $\caQ$ requires an additional condition to provide a unique solution to the Gauss
equation of the immersion. We describe the subtleties of existence and uniqueness for this equation in 
\S \ref{sec:Gauss}, which also draws on earlier work by Huang, Loftin \& Lucia \cite{HuaLL}.

The minimal Lagrangian case suggests that it is important to understand those minimal immersions for which $\caQ=0$. These
are treated in \S 5. We show that they
have a very interesting interpretation in terms of the Higgs bundle, for $\caQ=0$ exactly when the Higgs bundle is a
\emph{Hodge bundle} (or \emph{variation of Hodge structure}). These are known to be the critical points of the Morse-Bott
function $\|\Phi\|^2_{L^2}$ \cite{Got} and come in two flavours: length two or length three. The length-two Hodge bundles
all correspond to holomorphic or anti-holomorphic maps. The following theorem summarises our results
regarding these.
\begin{thm}\label{thm:introhol}
Let $\rho$ be irreducible and $f$ be branched holomorphic and $\rho$-equivariant. Then the pair $(\rho,f)$ is faithfully 
determined by  data $(\Sigma_c,B,L,\C.\eta)$ where $B$ is an effective divisor of degree $b$, $L$ is a holomorphic 
line bundle of degree $l$ satisfying
\[
3(g-1)+\tfrac12 b<l<6(g-1)-b,\quad 0\leq b<2(g-1),
\]
and $\C.\eta\in \P H^1(\Sigma_c,KL^{-1}(B))$ determines the isomorphism class of $V$ as an extension of $K^{-1}(B)$ by $K^{-2}L$. 
The Toledo invariant of $\rho$ is $\tfrac23 (6g-6-b-l)>0$. 
\end{thm}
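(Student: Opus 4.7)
The plan is to repeat the Higgs-bundle classification of Theorem \ref{thm:nonhol}, specialised to the degenerate case where the branched-holomorphic hypothesis forces $\Phi_2=0$. Starting from a pair $(\rho,f)$ of the stated type, the non-abelian Hodge correspondence (on the conformal structure $\Sigma_c$ induced by $f^*g_{\CH^2}$) produces a stable $PU(2,1)$-Higgs bundle $(E,\Phi)=(V\oplus 1,(\Phi_1,0))$, where $\Phi_1\in H^0(KV)$ is nonzero and its divisor of zeros is the branch divisor $B$ of $f$, of degree $b\ge 0$. Factoring $\Phi_1$ as the canonical composition $1\hookrightarrow\caO(B)=K\otimes K^{-1}B\hookrightarrow KV$ identifies a line subbundle $K^{-1}B\subset V$; defining $L:=K^{2}\otimes(V/K^{-1}B)$, I recognise $V$ as the extension
\[
0\to K^{-1}B\to V\to LK^{-2}\to 0
\]
of class $\eta\in\operatorname{Ext}^1(LK^{-2},K^{-1}B)\cong H^1(\Sigma_c,KL^{-1}B)$. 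The degree identity $\deg V=(b-2(g-1))+(l-4(g-1))=b+l-6(g-1)$, combined with the Toledo formula $\tau(\rho)=-\tfrac{2}{3}\deg V$ inherited from Theorem \ref{thm:nonhol}, produces the claimed $\tau(\rho)=\tfrac{2}{3}(6(g-1)-b-l)$.

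The inequalities on $l$ and $b$ and the nontriviality of $\eta$ will all come out of slope stability, which is forced by irreducibility of $\rho$. Because $\Phi$ vanishes on $V$, every line subbundle of $V$ is Higgs-invariant, and the only rank-$2$ Higgs-invariant subbundles of $E$ are $V$ itself and $K^{-1}B\oplus 1$ (the latter is invariant because $\Phi_1(1)\subset K\cdot K^{-1}B$). Slope stability of $V\subset E$ is equivalent to $\deg V<0$, i.e.\ $l<6(g-1)-b$; slope stability of $K^{-1}B\oplus 1$ reads $(b-2(g-1))/2<\deg V/3$, i.e.\ $l>3(g-1)+b/2$; compatibility of the two bounds forces $b<2(g-1)$. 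If $\eta=0$, the split $V=K^{-1}B\oplus LK^{-2}$ would give a line Higgs subbundle $LK^{-2}\subset V$ of degree $l-4(g-1)>\mu(E)$ by the lower bound on $l$, violating stability; equivalently, $\eta=0$ would render $\rho$ reducible.

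Conversely, given data $(\Sigma_c,B,L,\eta)$ satisfying the hypotheses, one forms $V$ as the extension of class $\eta$, sets $E=V\oplus 1$, $\Phi_2=0$, and takes $\Phi_1\colon 1\to KV$ to be the canonical composition $1\hookrightarrow\caO(B)\hookrightarrow KV$; then $\tr\Phi^2=0$ automatically, and stability plus non-abelian Hodge produce an irreducible $\rho$ and a $\rho$-equivariant weakly conformal harmonic map $f$, with $\tau(\rho)>0$ forcing $f$ to be branched holomorphic with branch divisor $B$. The two assignments are manifestly inverse. The main obstacle is the full slope stability analysis: beyond the two rank-$2$ subs already handled, one must verify that no other line subbundle of $V$ --- necessarily arising as the lift of some sub-line-sheaf $LK^{-2}(-E)\hookrightarrow LK^{-2}$ for effective $E$ --- destabilises the Higgs bundle. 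Such a lift exists precisely when the image of $\eta$ in $H^1(\Sigma_c,KL^{-1}B(E))$ vanishes, and the range $3(g-1)+b/2<l<6(g-1)-b$ is calibrated so that only lifts of small-degree $E$ could possibly destabilise; showing that nontriviality of $\eta$ together with these bounds genuinely rules out all such destabilising subs is the technical heart of the argument.
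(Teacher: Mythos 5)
Your overall route is the paper's route (its Theorem \ref{thm:hol}): pass to the Higgs bundle, observe $\Phi_2=0$, factor $\Phi_1$ through $K^{-1}(B)$ to exhibit $V$ as an extension of $K^{-2}L$ by $K^{-1}(B)$ with class $\eta\in H^1(\Sigma_c,KL^{-1}(B))$, read off $\tau(\rho)=-\tfrac23\deg V$, and extract the degree inequalities from slope stability applied to $V$ and to $K^{-1}(B)\oplus 1$; the exclusion of $\eta=0$ via the invariant quotient line $K^{-2}L$ is also exactly the paper's argument. (You omit the test subbundle $K^{-1}(B)$ itself, which gives $l>2b$, but that is implied by $l>3(g-1)+\tfrac12 b$ and $b<2(g-1)$, so nothing is lost there.) For the forward direction --- which is all that Theorem \ref{thm:introhol} literally asserts --- this is complete: stability is supplied by irreducibility of $\rho$, and the listed subbundles suffice to derive the stated inequalities as necessary conditions.

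The genuine gap is the one you yourself flag and then leave open. Since $\Phi$ annihilates $V$, \emph{every} line subbundle $M\subset V$ is $\Phi$-invariant, so the converse (building a stable $(E,\Phi)$, hence an irreducible $\rho$, from arbitrary admissible data) requires $\deg M<\tfrac13\deg E$ for all of them; calling this ``the technical heart'' and deferring it means the bijectivity claim is not proved. Worse, your conjectured resolution --- that $\eta\neq 0$ together with the degree bounds rules out all destabilising lifts --- is false in general. A lift of $K^{-2}L(-E)$ into $V$ exists exactly when $\eta$ lies in the kernel of $H^1(KL^{-1}(B))\to H^1(KL^{-1}(B)(E))$, and when both degrees are negative that kernel has dimension exactly $\deg E>0$, so nonzero such $\eta$ always exist; whether the resulting subbundle destabilises is a separate degree count, and it can: for $g=2$, $b=0$, $l=5$ one has $\mu(E)=-\tfrac13$, and for each point $p$ a one-dimensional family of nonzero $\eta$ admits a lift of $K^{-2}L(-p)$, of degree $0>\mu(E)$. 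So the stated conditions on $(\Sigma_c,B,L,\eta)$ do not by themselves guarantee stability. Note that the paper's own proof does not confront this either: it asserts that for non-split $V$ the only $\Phi$-invariant proper subbundles are $V_1$, $V_1\oplus 1$ and $V$, which, measured against its own definition of stability, discards precisely the subbundles you identified. A complete argument must either justify restricting the stability test to that short list, or impose the additional open condition on $\eta$ that no destabilising lift of $K^{-2}L(-E)$ exists; your proposal does neither.
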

This also accounts for anti-holomorphic immersions, since $f$ is anti-holomorphic and $\rho$-equivariant if and only if 
$\bar f$ is holomorphic and $\bar\rho$-equivariant.  The latter has the dual Higgs bundle to $\rho$.
But unlike Theorem \ref{thm:nonhol} these conditions are only necessary conditions on the data to produce a
stable Higgs bundle: this is explained in more detail in \S \ref{sec:Q=0}.

As with the non-holomorphic case, the extension class $\eta$ corresponds to the Dolbeault cohomology class of a tensor over 
$\Sigma_c$ which has
geometrical significance and is related to the second fundamental form of $f$ (see Theorem \ref{thm:eta}). We explain how
the limiting value $\eta=0$ corresponds to reducible representations which are not \emph{maximal}, i.e., 
do not have $\tau=\pm\chi(\Sigma)$.

By contrast, the length-three Hodge bundles correspond to those pairs $(\rho, f)$ coming from Theorem \ref{thm:nonhol} 
with $\xi=0$. By using the method of harmonic sequences \cite{BolW,CheW83,EelW,ErdG} we show that $\xi=0$ precisely when
the harmonic sequence of $f$ contains a holomorphic $\rho$-equivariant (and ``timelike'') map into complex de Sitter 
$2$-space. This is a pseudo-Hermitian symmetric space, the 
analogue of the real $2$-dimensional de Sitter space which complements $\RH^2$.

In the final section, \S 6, we describe the moduli space 
\[
\caV = \{(\rho,f):\text{$\rho$ irreducible, $f$ branched minimal}\}/G.
\]
By the results stated above it is a union of components $\caV(d_1,d_2)$ containing those pairs described by Theorem
\ref{thm:nonhol} and $\caW_\tau$ containing all holomorphic or anti-holomorphic maps with Toledo invariant $\tau$.
Of these we prove: 
\begin{thm}\label{thm:moduli}
Each $\caV(d_1,d_2)$ is a complex manifold of dimension $8g-8$, while 
$\caW_\tau$ is a complex manifold of dimension $9(g-1)-\tfrac{3}{2}\tau$. Each
$\caV(d_1,d_2)$ is diffeomorphic to a bundle over the Teichm\"{u}ller space of $\Sigma$, and the 
fibres are complex analytic submanifolds.  Each fibre $\caV_c(d_1,d_2)$ is a rank $5g-5-d_1-d_2$ vector bundle 
over $S^{d_1}\Sigma_c\times
S^{d_2}\Sigma_c$. For $\caW_\tau$ each fibre $\caW_\tau(c)$ is 
birational to a $\CP^N$-bundle over
$\Jac(\Sigma_c)$ where $N=5(g-1)-\tfrac{3}{2}\tau -1$.
\end{thm}
We finish in \S \ref{sec:R(G)} with a brief discussion of the map $\caV\to \caR(G)$ given by forgetting the immersion.
There is much
yet to be understood about this map. For example, we do not know if this map is onto for non-maximal representations, 
or the dimension of its image on components of $\caV$. Nevertheless, we can make some 
salient remarks about its restriction to any fibre over 
Teichm\"{u}ller space. We point out that on the image of $\caV$ the $L^2$-norm of the Higgs
fields equals the area of the minimal immersion. The critical points of $\|\Phi\|^2_{L^2}$ are all accounted for by
the Hodge bundles, and therefore lie in the image of $\caV$. A comparison of the structure of $\caV_c(d_1,d_2)$ with what
is known of the Morse index from \cite{Got}, together with an area bound established in \S 3, suggests that the 
fibres of the vector bundle $\caV_c(d_1,d_2)$ map onto the downward Morse flow of $\|\Phi\|^2_{L^2}$.

For us, one of the outstanding challenges is to use this construction to study the \emph{quasi-Fuchsian}
representations, where ``quasi-Fuchsian'' is meant in
the sense of Parker \& Platis \cite{ParP10}, i.e., a convex cocompact, totally loxodromic, discrete embedding. 
Recent work of Guichard \& Wienhard \cite[Thm 1.8]{GuiW} has shown that $\rho$ is a convex cocompact embedding 
precisely when it is an \emph{Anosov embedding}. Since the latter are totally
loxodromic, the notions ``quasi-Fuchsian'', ``convex cocompact embedding'' and ``Anosov embedding'' 
coincide for $PU(2,1)$ (and more generally, any semisimple
Lie group of real rank one). One knows from \cite{Gui} that quasi-Fuchsian representations comprise
an open subset of the representation variety $\caR(G)$.  Examples for
every even value of $\tau(\rho)$ were constructued by Goldman, Kapovich \& Leeb \cite{GolKL}, while Parker \& Platis 
\cite{ParP06} constructed a open family of quasi-Fuchsian representations in the Toledo invariant 
zero component. The latter family are perturbations of the Fuchsian representations corresponding to the totally geodesic
and Lagrangian embedding $\RH^2\subset \CH^2$, so they must overlap with the almost $\R$-Fuchsian representations constructed
in \cite{LofM13} (all of which are quasi-Fuchsian). 

Beyond these examples, there is very little known;
it is not even known whether there are any quasi-Fuchsian representations for non-integral 
values of the Toledo invariant. One compelling reason for looking to minimal immersions to provide
more insight is the theorem of Goldman \& Wentworth \cite{GolW}, that for convex cocompact
representations the harmonic map energy functional on Teichm\"{u}ller space is a
proper function.  It therefore has at least one critical point, and it is a well-known result of
Sacks \& Uhlenbeck \cite{SacU} that each critical point
corresponds to a weakly conformal harmonic (i.e., branched minimal) map. Since our
construction includes branched minimal maps, it follows that the map
$\caV\to\caR(G)$ has all quasi-Fuchsian representations in its image.

\smallskip\noindent
\textbf{Acknowledgements.} 
The authors gratefully acknowledge support from U.S. National Science Foundation grants DMS 1107452, 1107263, 1107367
\emph{RNMS: Geometric Structures and Representation Varieties} (the GEAR Network). The first author gratefully acknowledges
support support from a Simons Collaboration Grant for Mathematicians 210124. The first author is grateful to Olivier
Guichard, Zeno Huang, and Marcello Lucia for useful conversations. The second author thanks Peter Gothen for his remarks
regarding the Morse flow on the moduli space of Higgs bundles.

\section{Minimal surfaces and their Higgs bundles.}

We begin by setting up the
notation and standard constructions for the minimal surfaces and the Higgs bundles we will be working with.

\subsection{Equivariant minimal surfaces in $\CH^2$.}
Our model for $\CH^2$ will be the projective model, as follows.
Let $\C^{2,1}$ denote the vector space $\C^3$ equipped with the (indefinite) Hermitian metric 
\[
\g{v}{v} = v_1\bar v_1 + v_2\bar v_2 - v_3\bar v_3.
\]
Let $\C^{2,1}_- = \{v\in\C^{2,1}:\g{v}{v}<0\}$, so that $\CH^2\simeq \P\C^{2,1}_-$. Thus we consider $\CH^2$ as the
orbit of the line 
$[0,0,1]\in\P \C^{2,1}_-$ under the standard action of
$G=PU(2,1)$. Consequently $\CH^2\simeq G/H$, where $H\simeq P(U(2)\times U(1))$ is a maximal compact subgroup of $G$. 
We equip $\CH^2$ with its Hermitian metric of constant holomorphic sectional curvature $-4$; so that its sectional
curvature has bounds $-4\leq \kappa\leq -1$. We write the Hermitian metric on $\CH^2$ as $h=g-i\omega$, where $\omega(X,Y) =
g(JX,Y)$, and recall that
$(\CH^2,h)$ is a K\"{a}hler-Einstein manifold. 

We will always think of a minimal immersion $f:\caD\to\CH^2$ as a conformal harmonic immersion, so the induced metric
$\gamma=f^*g$ is conformally equivalent to the hyperbolic metric $\mu$, with
$\gamma = e^u\mu$ for a smooth function $u:\Sigma\to\R$. To say that
$f$ is $\rho$-equivariant means it intertwines $\rho$ with a Fuchsian representation $\pi_1\Sigma\to \Isom(\caD)$.
The conjugacy class of
such a representation is equivalent to a choice of a \emph{marked conformal structure} on $\Sigma$, i.e., a point
$c\in\caT_g$ in the Teichm\"{u}ller space of $\Sigma$. We will write $\Sigma_c$ to denote the surface with this
structure. From now on we will assume $f$ is $\rho$-equivariant.

To understand the properties of such minimal immersions we need some notation for the type decomposition of the
(complexified) differential $df:T^\C\caD\to T^\C\CH^2$. 
Both the domain and the codomain are complex manifolds, so to 
distinguish between the type decompositions of their tangent vectors we will write
\[
T^\C\caD = T^{1,0}\caD\oplus T^{0,1}\caD,\quad T^\C\CH^2 = T'\CH^2+T''\CH^2.
\]
The projections will be such that $X = X^{1,0}+\overline{X^{1,0}}$ (or $X'+\overline{X'}$ as appropriate) whenever $X$ is real. 
Our primary model for $T^\C\CH^2$ will be the projective model: viz,
at any point $\ell\in\P\C^{2,1}_-$ we use the isomorphism
\begin{align}\label{eq:TCH}
T_\ell'\CH^2\oplus T_\ell''\CH^2&\to \Hom(\ell,\ell^\perp)\oplus\Hom(\ell^\perp,\ell)\subset \End(\C^{2,1});\\
(Z,W)&  \mapsto (\pi_\ell^\perp\circ Z,\pi_\ell\circ W),\notag
\end{align}
where $\pi_\ell:\C^{2,1}\to\ell$ is the orthogonal projection and we think of $Z,W$ as operations of differentiation on
local sections of $\CH^2\times\C^3$. In particular, conjugation in $T^\C\CH^2$ corresponds to taking the Hermitian transpose
in $\End(\C^{2,1})$, i.e., whose fixed subspace is $\fu(2,1)$. 
The isomorphism can be derived from the symmetric space model for $\CH^2$, which we will occasionally need to use (cf.\ the 
related model for $\CP^n$ in, for example, \cite{Bur95}). 
For that model, let $\fg= \su(2,1)$ and let $\fh\subset\fg$ denote
the Lie subalgebra for $H$. Then the symmetric space decomposition is $\fg = \fh+\fm$ where
$\fm=\fh^\perp$ with respect to the Killing form, and $T\CH^2\simeq [\fm]= G\times_H\fm$, where the action of $H$
on $\fm$ is its right adjoint action. It is easy to check that the fibre of $[\fm^\C]$ at $\ell$ agrees with the
codomain of \eqref{eq:TCH}, and that the metric corresponds to $g(A,B)=\tfrac12 \tr(AB)$ whenever $A,B\in\fm$.

By extending $\ell$ to mean the tautological sub-bundle the Hermitian metric $h$ on
$T'\CH^2$ is then equivalent to the inner product
\[
h(Z_1,Z_2) = \g{\pi_\ell^\perp Z_1\sigma_0}{\pi_\ell^\perp Z_2\sigma_0},\quad \sigma_0\in\Gamma(\ell),\ \g{\sigma_0}{\sigma_0}=-1.
\] 
Then type decomposition induces an isometry $T\CH^2\to T'\CH^2$.
These type decompositions give four complex linear parts of $df$:
\begin{align}\label{eq:df}
&\partial f':T^{1,0}\caD\to T'\CH^2,\quad \partial f'':T^{1,0}\caD\to T''\CH^2, \\
&\bar\partial f':T^{0,1}\caD\to T'\CH^2,\quad \bar\partial f'':T^{0,1}\caD\to T''\CH^2,
\end{align}
which are related by $\bar\partial f'' = \overline{\partial f'}$ and $\bar\partial f' = \overline{\partial f''}$ using
simultaneously the conjugation in $T^\C\caD$ and $T^\C\CH^2$.
Since $f$ is $\rho$-equivariant, so is $df$, i.e.,
$df d\delta = d\rho(\delta )df$ whenever $\delta \in\pi_1\Sigma\subset\Isom(\caD)$. Therefore we can think of $df$ as a
section of the bundle $T^\C\Sigma^*\otimes (f^{-1}T^\C\CH^2/\rho)$ over $\Sigma_c$. In particular,
\[
\partial f=\partial f'+\partial f'',
\]
is a smooth section of the vector bundle $K\otimes (f^{-1}T^\C\CH^2/\rho)$ over $\Sigma_c$,
where $K$ is the canonical bundle of $\Sigma_c$. 

One says that $f$ has a \emph{complex point} at $p=f(z)$ when $\partial f''$ vanishes at $p$ (i.e., 
when $df(T^{1,0}\caD)\subset
T'\CH^2$), and an \emph{anti-complex point} when $\partial f'$ vanishes at $p$.  
The Levi-Civita connexion induces a holomorphic structure on $f^{-1}T^\C\CH^2/\rho$ which preserves type decomposition,
and $f$ is harmonic when $\nabla^{\CH^2}_{\bar Z}\partial f(Z)=0$ for local holomorphic sections $Z$ of $T^{1,0}\caD$, i.e., 
when 
\[
\nabla^{\CH^2}_{\bar Z}\partial f'(Z)=0 \text{ and } \nabla^{\CH^2}_{\bar Z}\partial f''(Z)=0.
\]
Thus $\partial f'$ and $\partial f''$ are
holomorphic sections of their respective bundles; so a harmonic immersion which is not holomorphic or 
anti-holomorphic must have isolated anti-complex and complex points. We will denote the divisors of zeroes of $\partial
f'$ and $\partial f''$ on $\Sigma$ by $D_1$ and $D_2$ respectively .

Through these identifications there is a sesqui-linear form $h(df',df')$ on $T^\C\Sigma$ which gives
the induced metric as $\gamma = \Re h(df',df')=g(df',df')$. The map $f$ is \emph{weakly conformal} when
\[
h(\partial f',\bar\partial f')=0,
\]
and conformal when additionally $df'$ does not vanish. 
Therefore, for conformal maps the induced metric and the pull-back of the \Kah form are expressed,
with respect to a local complex coordinate $z$, as
\begin{equation}\label{eq:gamma}
\gamma= f^*g =(u_1^2+u_2^2)|dz|^2,\quad
f^*\omega = \frac{i}{2}(u_1^2-u_2^2)dz\wedge d\bar z.
\end{equation}
where
\begin{equation}\label{eq:u}
u_1 =\|\partial f'(Z)\|,\ u_2=\|\partial f''(Z)\|,\quad Z=\partial/\partial z.
\end{equation}
The functions $u_1,u_2$ are locally real analytic and vanish precisely at the anti-complex and complex points, respectively.
They correspond to Hermitian metrics
\begin{equation}\label{eq:gammaj}
\gamma_1 = h(\partial f',\partial f')=u_1^2dz\overline{dz},\quad \gamma_2=h(\partial f'',\partial f'')= u_2^2dz\overline{dz}
\end{equation}
on $K^{-1}(D_1)$ and $K^{-1}(D_2)$ respectively. Note that, to apply $\gamma$ to all elements of
$T^\C\Sigma$ consistently,  the meaning of ``$|dz|^2$'' above is
\[
|dz|^2 = \tfrac12 (dz\overline{dz} + d\bar z\overline{d\bar z}),
\]
in terms of the local complex linear forms $dz,d\bar z$. For this reason we do not write $\gamma=\gamma_1+\gamma_2$.

Because the forms $\gamma,f^*\omega$ live on $\Sigma$
we can use some of the arguments which apply to compact minimal surfaces in
K\"{a}hler-Einstein manifolds \cite{Web,Wol89,CheT} to relate numerical invariants of a minimal immersion.
\begin{thm} Let $f:\caD\to\CH^2$ be a $\rho$-equivariant minimal immersion which is neither holomorphic nor
anti-holomorphic. Let $d_1,d_2$ be the degrees of the divisors $D_1$ and $D_2$ of anti-complex and complex points. 
Then
\begin{eqnarray}
c_1(\rho)& =& d_1-d_2,\label{eq:c_1}\\
\chi(\Sigma) + \chi(T\Sigma^\perp)& =& -d_1-d_2, \label{eq:chi}
\end{eqnarray}
where $c_1(\rho)$ is the first Chern class of the bundle $f^{-1}T\CH^2/\rho$ over $\Sigma$ and $T\Sigma^\perp\subset
f^{-1}T\CH^2/\rho$ is the normal sub-bundle. 
\end{thm}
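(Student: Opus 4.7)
The plan is to prove both identities by identifying the relevant bundles on $\Sigma$ as smooth complex line bundles whose Chern classes are computable from $D_1$ and $D_2$. For \eqref{eq:c_1} I shall split $f^{-1}T'\CH^2/\rho$ as an $h$-orthogonal sum of two line sub-bundles. Since $f$ is harmonic, $\partial f'$ is holomorphic and vanishes precisely on $D_1$, so its saturated image is a smooth line sub-bundle $L_1\cong K^{-1}(D_1)$ of $f^{-1}T'\CH^2$; similarly $\partial f''$ gives $L_2\cong K^{-1}(D_2)\hookrightarrow f^{-1}T''\CH^2$. The conjugate $\bar L_2$ is a smooth complex line sub-bundle of $f^{-1}T'\CH^2$, and since $\bar\partial f'=\overline{\partial f''}$, the weak conformality $h(\partial f',\bar\partial f')=0$ reduces to the pointwise statement $L_1\perp \bar L_2$ in the Hermitian metric $h$. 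As both are line sub-bundles of the rank-two bundle $f^{-1}T'\CH^2/\rho$, this forces the orthogonal decomposition
\[
f^{-1}T'\CH^2/\rho\;=\;L_1\oplus \bar L_2,
\]
and summing smooth Chern classes (using $c_1(\bar L_2)=-c_1(L_2)$) gives $c_1(\rho)=(d_1-(2g-2))+((2g-2)-d_2)=d_1-d_2$.

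For \eqref{eq:chi} the plan is to construct an explicit section of a twist of $T\Sigma^\perp$ and count its zeros. Consider
\[
\nu\;:=\;u_2^2\,\partial f'\,-\,u_1^2\,\partial f'',
\]
a smooth section of $f^{-1}T^\C\CH^2$; as it transforms by $c^2\bar c$ under $Z\mapsto cZ$, it is naturally a section of $K^2\bar K$ tensored with $f^{-1}T^\C\CH^2$. Writing $\alpha=\partial f'(Z)$, $\beta=\partial f''(Z)$, conformality and the Kähler identities for the complex-bilinear extension $g^\C$ of $g$ give $g^\C(\alpha,\beta)=0$, $g^\C(\bar\alpha,\bar\beta)=0$, while $g^\C(\alpha,\bar\alpha)$ and $g^\C(\beta,\bar\beta)$ agree with $u_1^2$ and $u_2^2$ up to a common normalization. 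A direct calculation then shows $\nu$ is $g^\C$-orthogonal to both $\partial f'+\partial f''$ and $\overline{\partial f'+\partial f''}$, so $\nu$ takes values in the complexified normal bundle $T\Sigma^\perp\otimes\C$. Equipping $T\Sigma^\perp$ with its natural almost complex structure $J_N$ coming from the ambient orientation and induced metric, an orientation bookkeeping check places $\nu$ in the anti-holomorphic part $\overline{(T\Sigma^\perp,J_N)}\subset T\Sigma^\perp\otimes\C$; if the orientation comes out the other way one uses $\bar\nu$ and the remainder is unchanged.

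Finally I count zeros. Near $p\in D_1$ of multiplicity $m_1$, write $\partial f'=z^{m_1}e$ with $e$ locally non-vanishing; then $u_1^2=|z|^{2m_1}|e|^2_h$ and
\[
\nu\;=\;z^{m_1}\!\left(u_2^2\,e\,-\,\bar z^{m_1}|e|^2_h\,\partial f''\right),
\]
with the bracketed factor non-vanishing at $p$, so $\nu$ has a smooth zero of order $m_1$ there; the analogous computation at each $p\in D_2$ gives an order-$m_2$ zero. Summing, the total zero-degree of $\nu$ equals $d_1+d_2$, and since $c_1(K^2\bar K)=2g-2$ while $c_1\bigl(\overline{(T\Sigma^\perp,J_N)}\bigr)=-\chi(T\Sigma^\perp)$,
\[
(2g-2)-\chi(T\Sigma^\perp)\;=\;d_1+d_2,
\]
giving $\chi(T\Sigma^\perp)=2g-2-d_1-d_2$; combined with $\chi(\Sigma)=2-2g$ this proves \eqref{eq:chi}. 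The main obstacle is the orientation check locating $\nu$ in $\overline{(T\Sigma^\perp,J_N)}$ rather than $(T\Sigma^\perp,J_N)$; everything else assembles out of local factorizations and direct Chern-class computations.
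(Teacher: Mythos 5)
Your proof is correct and complete, and it is worth noting that the paper itself offers no proof of this theorem: it only remarks that the arguments of Webster, Wolfson and Chen--Tian for closed minimal surfaces in K\"ahler surfaces carry over because $\gamma$ and $f^*\omega$ descend to $\Sigma$. Your treatment of the first identity is exactly the classical one: harmonicity makes $\partial f'$ and $\partial f''$ holomorphic, so their saturated images are line sub-bundles of degrees $d_1-(2g-2)$ and $d_2-(2g-2)$; weak conformality makes $L_1$ and $\bar L_2$ $h$-orthogonal, and since $h$ is positive definite on $T'\CH^2$ orthogonal line sub-bundles are automatically complementary, so the Chern classes add and the complex-linear isometry $T\CH^2\simeq T'\CH^2$ finishes the argument. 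For the second identity your section $\nu$ is a tidy repackaging of the classical zero count, and the one genuine issue you flag --- locating $\nu$ in the correct eigenspace of $J_N$ --- does close, and can be settled pointwise rather than by continuity from a Lagrangian point (which a given $f$ need not possess): at $p\notin D_1\cup D_2$ choose an $h$-orthonormal basis $\epsilon_1,\epsilon_2$ of $T_p'\CH^2$ with $\partial f'(Z)=u_1\epsilon_1$ and $\bar\partial f'(\bar Z)=u_2\epsilon_2$ (possible by conformality), and write $\epsilon_j=\tfrac12(v_j-iJv_j)$; then $f_*\partial_x=u_1v_1+u_2v_2$ and $f_*\partial_y=u_1Jv_1-u_2Jv_2$, the normal plane is spanned by $n_1=u_2v_1-u_1v_2$ and $n_2=u_2Jv_1+u_1Jv_2$, a $4\times4$ determinant shows $(f_*\partial_x,f_*\partial_y,n_2,n_1)$ is positively oriented for the complex orientation, so $J_Nn_2=n_1$ and $J_Nn_1=-n_2$, while $\nu(Z,Z,\bar Z)=\tfrac12u_1u_2(n_1-in_2)$ lies in the $(-i)$-eigenspace of $J_N$, of degree $-\chi(T\Sigma^\perp)$, as you need. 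This orientation convention is the one the paper uses: taking $d_1=d_2=0$ your formula gives $\chi(T\Sigma^\perp)=2g-2=-\chi(\Sigma)$, consistent with the integrated form of $\kappa^\perp=-\kappa_\gamma$ for Lagrangian immersions in \S\ref{sec:minlag}. The only sentence I would strike is your closing hedge: had the orientation come out the other way you could not simply ``use $\bar\nu$ with the remainder unchanged,'' since either choice then yields $\chi(T\Sigma^\perp)=d_1+d_2-(2g-2)$, the negative of the desired value; the eigenspace computation is load-bearing, but as shown above it does come out in your favour.
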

Wolfson also showed that in the absence of complex or anti-complex points a minimal immersion into a
K\"{a}hler-Einstein surface of negative scalar curvature must be Lagrangian \cite[Thm 2.1]{Wol89}. His argument 
extends to $\rho$-equivariant maps, so that in the setting of the previous theorem $f$ will be 
Lagrangian if and only if $d_1=0=d_2$.

Now we recall that the Toledo invariant $\tau(\rho)$ is defined by
\begin{equation}\label{eq:taudef}
\tau(\rho) = \frac{2}{\pi}\int_\Sigma f^*\omega.
\end{equation}
This is the normalisation which fits with the metric of  holomorphic sectional curvature $-4$.
It is known that for any representation $\rho$ into $PU(2,1)$, $|\tau(\rho)|\leq -\chi(\Sigma)$
\cite{DomT} and $\tau(\rho)\in \tfrac{2}{3}\Z$ \cite{GolKL}.
Since $\CH^2$ has Einstein constant $-6$, equation \eqref{eq:c_1} tells us that
\begin{equation}\label{eq:tau}
\tau(\rho) =  -\tfrac{2}{3}c_1(\rho)= \tfrac{2}{3} (d_2-d_1).
\end{equation}
Finally, as well as the degenerate metrics $\gamma_j$ above, there is a third important invariant of minimal equivariant 
immersions \cite[Cor 2.7]{Woo84}, 
the cubic holomorphic differential $\caQ\in H^0(\Sigma,K^3)$ defined by
\begin{equation}\label{eq:Q}
\caQ(Z,Z,Z) = h(\nabla^{\CH^2}_Z\partial f'(Z),\bar\partial f'(\bar Z))
=-h(\partial f'(Z),\nabla^{\CH^2}_{\bar Z}\bar\partial f'(\bar Z)),
\end{equation}
for $Z\in T^{1,0}\caD$.
It follows at once from this that $\caQ$ vanishes identically for holomorphic or anti-holomorphic immersions (but not only
for them, as we shall see). When $f$ is neither holomorphic nor anti-holomorphic
we will see later that the quantities $\gamma_1,\gamma_2,\caQ$ uniquely determine $f$ up to ambient isometries.

\subsection{$G$-Higgs bundles and representations.}
As well as fixing our notation for Higgs bundles, we also need to summarise their correspondence with (projectively)
flat connexions, and hence representations and harmonic maps, since we will be making explicit use of this correspondence 
for most of this article.  A good general introduction to Higgs bundles can be found in, for example, \cite{Garapp,Gar}, 
while details for their moduli spaces in the case $G=PU(2,1)$ can be found in \cite{Got,Xia}.

Suppose $\Sigma$ has been given a fixed conformal structure. With the notation of the previous section, a $G$-Higgs 
bundle for $G=PU(2,1)$ is a projective equivalence class of $U(2,1)$-Higgs bundles. The latter are
pairs $(E,\Phi)$ consisting of a holomorphic rank three vector bundle $E$ over $\Sigma$ equipped
with a splitting $E=V\oplus L$ into a rank two sub-bundle $V$ and a line sub-bundle $L$ (both holomorphic) together with a 
holomorphic section
\begin{equation}\label{eq:Higgs}
\Phi\in H^0(K\otimes[\Hom(L,V) \oplus \Hom(V,L)]),
\end{equation}
called the \emph{Higgs field}. Projective equivalence identifies pairs $(E,\Phi)$ and $(E',\Phi')$ when 
there is a holomorphic line bundle $\caL$ for which $E'=E\otimes\caL$ and $\Phi'=\Phi$. 

We will write $\Phi = (\Phi_1,\Phi_2)$ to denote the two summands implied by the direct sum \eqref{eq:Higgs}.
It is also convenient to write the holomorphic structure on $E$ as a
$\bar\partial$-operator on smooth sections, $\bar\partial_E:\caE^0(E)\to \caE^{0,1}(E)$.
A Higgs bundle is \emph{stable} if for any proper (non-zero) $\Phi$-invariant sub-bundle $W\subset E$ the slope condition
\begin{equation}\label{eq:slope}
\frac{\deg(W)}{\rk(W)} < \tfrac13\deg(E),
\end{equation}
is satisfied. It is \emph{polystable} when it is either stable or the direct sum of stable proper Higgs sub-bundles all 
having the same slope (these latter type are called \emph{strictly polystable}). These properties are independent of the
choice of pair $(E,\Phi)$ representing the projective equivalence class. Now we recall that each polystable $U(2,1)$ Higgs bundle
admits a $\C^{2,1}$ metric and compatible projectively flat connexion, and thereby produces a flat $PU(2,1)$-bundle. 
\begin{thm}[Prop 3.9, \cite{BraGG}]
Fix a K\"{a}hler metric on $\Sigma_c$ in the conformal class $c$, with  K\"{a}hler $2$-form $\omega_c$, normalised so that
$\Sigma_c$ has area $2\pi$.
For each polystable stable $U(2,1)$-Higgs bundle $(E,\Phi)$ of slope $\mu$ there is a $\C^{2,1}$ metric on $E$ for 
which $L=V^\perp$ and
the corresponding Chern connexion $\nabla_E$ and conjugate $\Phi^\dagger$ yield a projectively flat connexion 
$\nabla = \nabla_E + \Phi+\Phi^\dagger$, i.e., 
\begin{equation}\label{eq:projflat}
R^\nabla = R^{\nabla_E}+[\Phi\wedge\Phi^\dagger] = -i\mu \omega_c I_E.
\end{equation}
\end{thm}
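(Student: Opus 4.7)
The approach I would take is to view the statement as the Hitchin-Kobayashi correspondence for $G$-Higgs bundles applied to the real form $G = U(2,1)$, and to split the argument into an analytic existence step and an algebraic verification step. The goal is to build the pseudo-Hermitian $\C^{2,1}$-metric on $E$ from a positive-definite auxiliary metric $\widehat H$ on $E$ in which the holomorphic splitting $V\oplus L$ is orthogonal, and then to translate the defining PDE for $\widehat H$ into the projective-flatness statement \eqref{eq:projflat} for the combined connexion $\nabla = \nabla_E + \Phi + \Phi^\dagger$.

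For the analytic step, I would invoke the existence theorem for harmonic reductions on polystable $G$-Higgs bundles, due to Hitchin, Simpson and Corlette in the complex case and extended to real reductive $G$ by Bradlow, Garc\'{\i}a-Prada, Gothen and their collaborators. The maximal compact subgroup of $U(2,1)$ is $U(2)\times U(1)$, which is exactly the stabiliser of an orthogonal splitting of type $(2,1)$; thus a reduction of structure group from $GL(3,\C)$ to this maximal compact is the same data as a positive-definite Hermitian metric $\widehat H$ on $E$ with $V\perp_{\widehat H} L$. The $G$-polystability hypothesis for $U(2,1)$ coincides with the slope inequality \eqref{eq:slope} applied to $\Phi$-invariant sub-bundles --- exactly the polystability assumed here --- so this correspondence produces an $\widehat H$ satisfying the $U(2,1)$-Hitchin equation, which, in the sign convention adapted to the indefinite signature, reads $R^{\nabla_E^{\widehat H}} - [\Phi\wedge\Phi^{*\widehat H}] = -i\mu\,\omega_c\,I_E$.

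The algebraic step is then to define $h := \widehat H|_V \oplus (-\widehat H|_L)$, giving a non-degenerate Hermitian form of signature $(2,1)$ with $L = V^{\perp_h}$. Because the equations defining the Chern connexion of a non-degenerate Hermitian form are homogeneous in that form, and because $\widehat H$ and $h$ agree up to a global sign on each orthogonal summand of the holomorphic splitting, both yield the same Chern connexion; call this common connexion $\nabla_E$. The $h$-adjoint $\Phi^\dagger$ is computed from $\Phi^{*\widehat H}$ by inspecting the two components $\Phi_1:L\to KV$ and $\Phi_2:V\to KL$ separately: the sign flip on $L$ gives $\Phi^\dagger = -\Phi^{*\widehat H}$, and hence $[\Phi\wedge\Phi^\dagger] = -[\Phi\wedge\Phi^{*\widehat H}]$. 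Substituting this identity into the Hitchin equation above delivers \eqref{eq:projflat} verbatim.

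The hardest part is the analytic existence step: establishing that $U(2,1)$-polystability of $(E,\Phi)$ implies solvability of the corresponding Hitchin equation is a non-linear PDE result that does not follow mechanically from the standard Hermite-Einstein theorem for $U(3)$-Higgs bundles, precisely because of the sign discrepancy between $[\Phi\wedge\Phi^{*\widehat H}]$ and $[\Phi\wedge\Phi^\dagger]$ noted above. One proceeds either by Simpson's variational method, minimising the Donaldson functional adapted to the real form, or by a Donaldson-type heat flow, in either case exploiting the convexity properties of the $U(2,1)$-equation on the space of Hermitian reductions to the maximal compact. By contrast, the bundle-theoretic passage from $\widehat H$ to $h$, and the derivation of \eqref{eq:projflat}, are routine once $\widehat H$ is in hand.
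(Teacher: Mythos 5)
First, a point of comparison: the paper offers no proof of this statement. It is quoted directly from \cite[Prop 3.9]{BraGG}, and the only content the authors add is the remark immediately following it, namely that the positive-definite Hermitian metric $\widehat H$ produced by the usual form of that proposition is converted into the $\C^{2,1}$ metric by reversing its sign on $L$, which changes neither the Chern connexion nor the formula for $\nabla$. Your architecture --- invoke the Hitchin--Kobayashi correspondence for the real form $U(2,1)$ as the analytic input, then perform this sign flip --- is therefore exactly the paper's, and your identification of the existence step as the real content, to be imported rather than re-derived, is faithful to what the paper does.

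However, your algebraic step contains a sign inconsistency that would derail the argument if carried out literally. The computation $\Phi^\dagger=-\Phi^{*\widehat H}$ is indeed the correct formula for the genuine $\C^{2,1}$-adjoint of a map interchanging $V$ and $L$; but with that convention $\Phi+\Phi^\dagger$, evaluated on real tangent vectors, is $h$-\emph{self}-adjoint rather than valued in $\fm\subset\fu(2,1)$, so $\nabla=\nabla_E+\Phi+\Phi^\dagger$ would not preserve the $\C^{2,1}$ metric, would not be the flat connexion supplied by non-abelian Hodge theory, and would have curvature $R^{\nabla_E}-[\Phi\wedge\Phi^{*\widehat H}]=-i\mu\omega_c I_E-2[\Phi\wedge\Phi^{*\widehat H}]$, which is not central. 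You land on \eqref{eq:projflat} only because you also insert a minus sign into the positive-definite Hitchin equation; that equation is the ordinary $GL(3,\C)$ Hitchin--Simpson equation restricted to metrics diagonal with respect to $V\oplus L$, and it carries the usual plus sign, so your two sign changes cancel. The intended reading --- confirmed by the paper's remark that ``the adjoint remains the same'' and by the later local formulas \eqref{eq:dbarE}, \eqref{eq:Phi}, \eqref{eq:nabla} and the Toda equations \eqref{eq:Toda1}--\eqref{eq:Toda2} --- is that $\Phi^\dagger$ denotes the $\widehat H$-adjoint, i.e.\ the conjugate transpose in a pseudo-unitary frame; with that convention \eqref{eq:projflat} is verbatim the equation of \cite{BraGG}, and the role of the off-diagonal condition \eqref{eq:Higgs} is not to make the two adjoints agree (they differ by a sign) but to guarantee that $\Phi+\Phi^{*\widehat H}$ is $\fu(2,1)$-valued on real vectors, so that $\nabla$ does preserve the $\C^{2,1}$ metric. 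A minor further point: the diagonality of the harmonic metric with respect to $V\oplus L$ follows from the $GL(3,\C)$ theorem by uniqueness, since the gauge transformation $\diag(I_V,-I_L)$ fixes $E$ and sends $\Phi$ to $-\Phi$ without altering the equation; so the analytic step is closer to the standard Hermite--Einstein theory than your last paragraph suggests.
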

Here $\Phi^\dagger$ denotes the conjugate with respect to the $\fu(2,1)$ real form, so
the connexion $\nabla$ induces a flat connexion on the principal $PU(2,1)$-bundle whose associated bundle is $\P E$.
The holonomy of this flat connexion yields a reductive representation
$\rho:\pi_1\Sigma\to PU(2,1)$ (determined up to conjugacy) which is irreducible precisely when $(E,\Phi)$ is stable
(hence strictly polystable Higgs bundles correspond to reducible reductive representations).
\begin{rem}
This theorem can be thought of as an example of the Donaldson-Uhlenbeck-Yau correspondence between stable bundles over compact
\Kah manifolds and Hermitian-Einstein connections (Garcia-Prada gives a good overview which fits our context in \cite{Garapp}). 
For $G$-Higgs bundles such a correspondence is due to Hitchin for $G=SL(2,\C)$ \cite{Hit87} and Simpson \cite{Sim} when 
$G$ is a complex reductive algebraic group. 
\end{rem}
Note that the usual statement of the previous theorem gives the existence of a Hermitian metric on $E$. This is
equivalent, since one can simply swap the sign of the metric on $L$, and the condition \eqref{eq:Higgs}
on the Higgs field ensures that $\Phi^\dagger$ corresponds to the adjoint for the Hermitian metric.
This makes the $\C^{2,1}$ metric negative definite on $L$, and therefore $L$ determines a
smooth section of the $\CH^2$ bundle $\P E_-$ (where $E_-$ denotes the bundle of negative length vectors in $E$).
Since $\P E_-\simeq \caD\times_\rho\CH^2$ this section is equivalent to a $\rho$-equivariant
map $f:\caD\to\CH^2$ in such a way that $\Phi = \partial f$. Moreover, $\nabla_E$ induces a metric connexion on
$f^{-1}T\CH^2$ which agrees with the pull-back of the Levi-Civita connexion, so that the equation $\bar\partial_E\Phi=0$ 
is the harmonic map condition for $f$.

In the reverse direction, a representation $\rho:\pi_1\Sigma\to G$ determines the projective bundle
$\P E$ uniquely and therefore a
class of projectively equivalent $\C^{2,1}$ bundles, each with a projectively flat connexion. By Corlette's results
\cite{Cor}, there is a $\rho$-equivariant harmonic map $f:\caD\to\CH^2$ precisely when $\rho$ is reductive, and this map is
unique when $\rho$ is irreducible . The map corresponds to a line sub-bundle $L\subset E$ and therefore a splitting 
$E=L^\perp\oplus L$.
The splitting determines a bundle automorphism $\sigma\in\End(E,E)$ for which $\sigma|L^\perp=1$ and $\sigma|L=-1$, and
therefore a decomposition $\nabla = \nabla_E+\Psi$, where
\[
\nabla_E = \tfrac12(\nabla + \sigma\nabla\sigma),\quad \Psi = \tfrac12(\nabla - \sigma\nabla\sigma).
\]
The Higgs field is $\Phi=\Psi^{1,0}$.
The harmonic map equation, when paired with the projective flatness of $\nabla$, asserts that
$\bar\partial_E\Psi^{1,0}=0$, and thus the Higgs field satisfies \eqref{eq:Higgs} when we take
$V=L^\perp$.

Two such bundles, $(E,\nabla)$ and $(E',\nabla')$, are projectively equivalent when there is a line bundle $\caL$ 
equipped with a connexion $\nabla_\caL$ for which $E'\simeq E\otimes\caL$ and $\nabla'\simeq\nabla\otimes\nabla_\caL$
(the induced connexion on the tensor product). In particular, by taking $\caL = L^{-1}$ equipped with the connexion 
obtained from the restriction of $\nabla$ to $L$, we may assume without loss of generality that $E=V\oplus 1$, where 
$1$ denotes the trivial bundle, and the restriction of $\nabla$ to $1$  is the canonical flat connexion. In that case the Toledo 
invariant of $\rho$ is $-\tfrac23\deg(V)$.
\begin{rem}
The alternative normalisation, used by Xia \cite{Xia}, is to note that since $\deg(E\otimes\caL) =
\deg(E)+3\deg(\caL)$ one can normalise by degree, i.e., insist that $0\leq\deg(E)<3$. In particular, the topological type
of $\P E$ is determined by $\deg(E)\bmod 3$, and the representation $\rho$ only lifts to $SU(2,1)$ when there exists an
$\caL$ for which $E\otimes\caL\simeq\caD\times_{\hat\rho}\C^{2,1}$ for some representation $\hat\rho:\pi_1\Sigma\to U(2,1)$. 
This happens if and only if $\deg(E)\equiv 0\bmod 3$, i.e., when $\tau\in 2\Z$.
\end{rem}
\begin{rem}
From the Higgs bundle perspective, the Toledo invariant is sometimes defined to be $\tfrac23\deg(V L^{-1})$ (see, for
example, \cite{Xia}). This differs by a sign from our convention, since
\begin{equation}\label{eq:Toledo}
\tfrac23 \deg(V L^{-1}) = \tfrac23 \deg\Hom(L,V) = \tfrac23 c_1(\rho) =-\tau(\rho).
\end{equation}
\end{rem}

\subsection{Minimal surfaces and their Higgs bundles.}
We are now in a position to classify, in terms of Higgs bundle data, the minimal $\rho$-equivariant surfaces which 
are neither holomorphic nor anti-holomorphic, when $\rho$ is irreducible.
\begin{thm}\label{thm:minimal}
An irreducible representation 
$\rho\in \Hom(\pi_1\Sigma,G)$ admits a $\rho$-equivariant minimal immersion 
$f:\caD\to\CH^2$ which is neither holomorphic nor anti-holomorphic if and only if it 
corresponds to a Higgs bundle $(E,\Phi)$ for which $E=V\oplus 1$ where 
$V$ is a rank $2$ holomorphic extension 
\begin{equation}\label{eq:V}
0\to K^{-1}(D_1)\stackrel{\Phi_1}{\to} V\stackrel{\Phi_2}{\to} K(-D_2)\to 0.
\end{equation}
Here $D_1,D_2$ are effective divisors with no common points, whose degrees $d_1,d_2$ 
satisfy the stability inequalities
\begin{equation}\label{eq:stab}
2d_1+d_2<6(g-1) \text{ and } d_1+2d_2<6(g-1),
\end{equation}
where $g$ is the genus of $\Sigma$. The Higgs field is given by $\Phi = (\Phi_1,\Phi_2)$,
after making the canonical identifications
\begin{align}\label{eq:ident}
&\Hom(K^{-1}(D_1),V)\simeq K(-D_1)\otimes\Hom(1,V), \notag \\
&\Hom(V, K(-D_2))\simeq K(-D_2)\otimes\Hom(V,1).
\end{align}
These divisors are, respectively, the divisors of anti-complex and complex points of the minimal immersion $f$.
The representation $\rho$ has $\tau(\rho) = \tfrac23(d_2-d_1)$.
\end{thm}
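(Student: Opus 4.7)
The plan is to apply the non-abelian Hodge correspondence stated above, identifying (poly)stable $PU(2,1)$-Higgs bundles with reductive representations, and to translate the geometric hypotheses on $f$ into algebraic constraints on $\Phi$: conformality of $f$ is equivalent to $\tr \Phi^2 = 0$, while $f$ being neither holomorphic nor anti-holomorphic is equivalent to both $\Phi_1$ and $\Phi_2$ being non-zero. A direct block computation gives $\tr \Phi^2 = 2\,\Phi_2\Phi_1 \in H^0(K^2)$, so the conformal condition reduces to $\Phi_2\Phi_1 = 0$.

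For the forward direction, starting from $(\rho, f)$ and normalising $L = 1$ by projective equivalence, the Higgs field decomposes as $\Phi = (\Phi_1, \Phi_2)$ with $\Phi_1 \in H^0(KV)$ and $\Phi_2 \in H^0(KV^*)$ corresponding to $\partial f'$ and $\partial f''$ respectively. Their divisors of zeros are therefore the divisors $D_1, D_2$ of anti-complex and complex points of $f$, and one may factor $\Phi_j = s_j \tilde\Phi_j$ with $s_j \in H^0(\caO(D_j))$ the canonical section and $\tilde\Phi_1: K^{-1}(D_1) \hookrightarrow V$, $\tilde\Phi_2: V \twoheadrightarrow K(-D_2)$ nowhere vanishing. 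The vanishing $\tilde\Phi_2 \tilde\Phi_1 = 0$ forces the image of $\tilde\Phi_1$ to lie in the kernel of $\tilde\Phi_2$; since both are line subbundles of the rank-two $V$, they coincide and yield the desired short exact sequence. The immersion condition $df \neq 0$ at every point translates directly to $D_1 \cap D_2 = \emptyset$, since a common zero would make both $\partial f'$ and $\partial f''$ vanish.

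The stability inequalities come from enumerating all proper $\Phi$-invariant holomorphic subbundles $W \subset E$. A case analysis, using $\Phi_2\Phi_1 = 0$ to rule out rank-one ``eigenline'' invariant subbundles (an invariant line with $\Phi|_W$ non-zero would force $\mu^2 = 0$ in a $K$-valued eigenvalue relation, a contradiction) and a similar argument in the rank-two case, shows that the only possibilities are $W_1 = \tilde\Phi_1(K^{-1}(D_1)) \subset V$ and $W_2 = W_1 \oplus 1$, both of degree $d_1 - 2(g-1)$ and of ranks $1$ and $2$ respectively. Imposing the slope bound $\deg W / \rk W < \deg E / 3 = (d_1-d_2)/3$ for each of these yields exactly the two inequalities $2d_1 + d_2 < 6(g-1)$ and $d_1 + 2d_2 < 6(g-1)$. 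The converse runs this reasoning in reverse: the given data produce a Higgs bundle $(E, \Phi)$ with $L = 1$ and $V$ the prescribed extension, the same case analysis establishes stability from the two inequalities, the correspondence yields an irreducible $(\rho,f)$, and conformality (automatic from exactness of the sequence), the immersion property (from $D_1 \cap D_2 = \emptyset$), and the non-(anti-)holomorphicity (both $\Phi_j \neq 0$) hold by construction. The Toledo invariant follows from $\tau(\rho) = -\tfrac{2}{3}\deg V$ and $\deg V = d_1 - d_2$ read off the extension. The main obstacle will be the case analysis of $\Phi$-invariant subbundles: in particular, ruling out rank-one subbundles that project non-trivially to $L$ and rank-two subbundles not of split form $W_V \oplus 1$ both require a careful local computation combining the block form of $\Phi$ with the conformality constraint.
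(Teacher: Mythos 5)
Your proposal follows essentially the same route as the paper's proof: pass to the Higgs bundle with $L=1$, reduce conformality to $\tr\Phi^2=0$ and hence to $\Phi_2\circ\Phi_1=0$, read off the extension \eqref{eq:V} with $D_1,D_2$ the zero divisors of $\Phi_1,\Phi_2$, and derive \eqref{eq:stab} from the slope condition applied to the only two $\Phi$-invariant proper subbundles $V_1$ and $V_1\oplus 1$, with the converse obtained by reversing the construction. The only cosmetic differences are that you compute $\tr\Phi^2=2\,\Phi_2\Phi_1$ via the invariant identity $\tr_V(\Phi_1\Phi_2)=\Phi_2\Phi_1$ where the paper uses a local $U(2,1)$ frame, and you rule out other invariant subbundles by a nilpotency argument rather than reading them off the local matrix form of $\Phi$; both are sound.
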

\begin{proof}
Given $\rho$ we obtain a projectively flat $\C^{2,1}$ bundle of the form $E=V\oplus 1$, and then $f$ provides 
holomorphic sections
\begin{align*}
\Phi_1&=\partial f'\in H^0(K\otimes\Hom(1,V))\simeq H^0(\Hom(K^{-1},V)),\\ 
\Phi_2&=\partial f'' \in H^0(K\otimes\Hom(V,1))\simeq H^0(\Hom(V,K)),
\end{align*}
whose sum is $\Phi=\partial f$. Since $f$ is conformal we have 
\[
0=g(\partial f,\partial f)=\tfrac12 \tr(\Phi^2).
\]
From this we can show that $\Phi_2\circ\Phi_1=0$, by the following local frame argument. 
Neither of $\Phi_1,\Phi_2$ are identically zero since
$f$ is not $\pm$-holomorphic. So, away from its zero locus, the image of $\Phi_1$ is a rank one sub-bundle $V_1\subset V$.
We can locally frame $E$ by sections $\sigma_1,\sigma_2$ of $V$ and $\sigma_3$ of $1$ such that $\sigma_1$ generates
$V_1$,
and we may assume this is a $U(2,1)$ frame with respect to the metric on $E$. It follows that there are
locally holomorphic sections $a,b,c$ of $K$ for which
\[
\Phi_2(\sigma_1) = a\sigma_3,\quad \Phi_2(\sigma_2) = b\sigma_3,\quad \Phi_1(\sigma_3) = c\sigma_1.
\]
Since $f$ is not anti-holomorphic $c\neq 0$. Now 
\begin{align*}
\tr( \Phi^2) & = \tr(\Phi_1\circ\Phi_2+ \Phi_2\circ\Phi_1)\\
&= \g{\Phi_1\circ\Phi_2(\sigma_1)}{\sigma_1} +\g{\Phi_1\circ\Phi_2(\sigma_2)}{\sigma_2} -
\g{\Phi_2\circ\Phi_1(\sigma_3)}{\sigma_3}\\
& = 2ac. 
\end{align*}
Therefore $\tr(\Phi^2)=0$ implies $a=0$, i.e.,  $\Phi_2\circ\Phi_1=0$.
Thus 
\[
K^{-1}\stackrel{\Phi_1}{\to}V\stackrel{\Phi_2}{\to} K,
\]
has the image of $\Phi_1$ in the kernel of $\Phi_2$. Now $\Phi_1$ vanishes precisely 
on anti-complex points, while $\Phi_2$ vanishes precisely on complex points, so we have
\[
0\to K^{-1}(D_1)\stackrel{\Phi_1}{\to} V\stackrel{\Phi_2}{\to} K(-D_2)\to 0.
\]
This must be exact at the middle since $V$ has rank $2$.

For stability we need to identify the $\Phi$-invariant sub-bundles. With respect to the local frame 
$\sigma_1,\sigma_2,\sigma_3$ above $\Phi$ is represented by the matrix
\[
\begin{pmatrix} 0&0&c\\ 0&0&0\\ 0&b&0\end{pmatrix}.
\]
It follows that the only $\Phi$-invariant proper sub-bundles of $E$ are the image $V_1$ of $\Phi_1$ and $V_1\oplus 1$. 
So stability requires
\[
\deg(V_1)<\tfrac13\deg(E),\quad \tfrac12\deg(V_1)<\tfrac13\deg(E).
\]
Since $\deg(V_1) = d_1-2(g-1)$ and $\deg(E) = d_1-d_2$ these inequalities are equivalent to the inequalities
\eqref{eq:stab}.

Reversing the argument is straightforward: when the Higgs bundle has this form we have $\Phi_2\circ\Phi_1=0$, hence
$\tr(\Phi^2)=0$. So when the Higgs bundle is stable the sub-bundle $1\subset E$ provides a conformal harmonic 
$\rho$-equivariant map $f$ for some irreducible $\rho$.
\end{proof}
The proof works perfectly well in the case where
$D_1,D_2$ have common points, in which case the map $f$ is a branched minimal immersion with branch points on $D_1\cap
D_2$. We will follow common usage and say $D_1,D_2$ are \emph{co-prime} when they are disjoint. 

The Higgs bundle data which appears in the previous theorem can be written as a quadruple
$(\Sigma_c,D_1,D_2,\xi)$, where $\xi\in
H^1(\Sigma_c,K^{-2}(D_1+D_2))$ is the extension class of \eqref{eq:V}.  The theorem shows that this
data determines the pair $(\rho,f)$ up to $G$-equivalance, i.e., up to the simultaneous action of $G$
by conjugacy of $\rho$ and ambient isometry of $f$. The exact sequence \eqref{eq:V}, which
gives us $V$ and $\Phi_1,\Phi_2$, is completely and uniquely determined by its extension class $\xi$ \cite[Ch. 5]{Gun}.
Moreover, since the isomorphisms \eqref{eq:ident} require $D_1,D_2$, not just their linear equivalence classes,
the assignment from $(\Sigma_c,D_1,D_2,\xi)$ to the $G$-equivalance class of $(\rho,f)$ is bijective. 
The moduli space parametrised by this data will be described in \S 6.
\begin{rem}\label{rem:hol}
The proof above shows that when $(E,\Phi)$ has $\tr(\Phi^2)=0$ it cannot split into a direct sum of
$\Phi$-invariant sub-bundles unless one of $\Phi_1$ or $\Phi_2$ is identically zero. It follows that
a  reducible $\rho$ can  only admit $\rho$-equivariant minimal surfaces which are holomorphic or anti-holomorphic. 
We describe all these reducible representations in Remark \ref{rem:reducible} below.
However, there are also irreducible representations which admit
holomorphic or anti-holomorphic surfaces: we give a complete classification in \S\ref{sec:Q=0}. 
\end{rem}

\section{The Higgs bundle data in terms of minimal surface data.}

In this section we will give the explicit correspondence between the minimal surface data $(\gamma_1,\gamma_2,\caQ)$ 
and the Higgs bundle data $(\Sigma_c,D_1,D_2,\xi)$ of the previous section. 
This is achieved by exploiting the harmonic sequence for a minimal surface in $\CH^2$.
It provides us with a preferred system of local $U(2,1)$ frames for the bundle $E$ in Theorem \ref{thm:minimal}, and which we 
will call \emph{Toda frames}. These frames are explictly determined by the minimal surface data, and through them we calculate 
the extension class $\xi$ of the bundle $V$ in Theorem \ref{thm:minimal}. 
The correspondence between the Higgs data and the minimal surface data then comes through the Dolbeault isomorphism
\[
H^1(\Sigma_c,K^{-2}(D_1+D_2))\simeq H^{0,1}(\Sigma_c,K^{-2}(D_1+D_2)).
\]
\begin{thm}\label{thm:xi}
Let the pair $(\rho,f)$ correspond to the Higgs data $(\Sigma_c,D_1,D_2,\xi)$ as in the previous section. 
Let  $\gamma_1,\gamma_2,\caQ$ be the minimal surface data determined by $f$ through \eqref{eq:gammaj} and \eqref{eq:Q}.
Then the extension class $\xi$ corresponds, under the Dolbeault isomorphism, to the cohomology class
\[
-\left[\frac{\bar\caQ}{\gamma_1\gamma_2}\right]\in H^{0,1}(\Sigma_c,K^{-2}(D_1+D_2)).
\]
Moreover $\xi=0$ if and only if $\caQ=0$.
\end{thm}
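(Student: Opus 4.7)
The plan is to compute $\xi$ explicitly from a smooth splitting of the exact sequence \eqref{eq:V} provided by the Hermitian metric on $V$, and then evaluate its $\bar\partial$-obstruction — which represents $\xi$ in Dolbeault cohomology — in terms of the minimal surface data using the Toda frames promised just before the theorem. On a coordinate patch $U$ avoiding $D_1 \cup D_2$ I would pick a $U(2,1)$-unitary frame $(e_1, e_2, e_3)$ of $E$ with $e_3$ a negative-length unit section of the trivial sub-bundle $1 \subset E$, $e_1$ a unit section of $V_1 := \Phi_1(L) \subset V$, and $e_2$ the Hermitian complement of $e_1$ in $V$. Relative to a local holomorphic coordinate $z$ with $Z = \partial/\partial z$, the Higgs field reads $\Phi(e_3) = u_1 e_1\, dz$, $\Phi(e_2) = u_2 e_3\, dz$, and $\Phi(e_1) = 0$, with $u_j$ the functions of \eqref{eq:u}; the canonical identifications \eqref{eq:ident} send $u_1 (\partial/\partial z) \in \Gamma(K^{-1}(D_1))$ to $e_1$ and $u_2\, dz \in \Gamma(K(-D_2))$ to $e_2$.

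Taking $V_2 := \mathrm{span}_\C(e_2)$ as a smooth complement to $V_1 \subset V$, the corresponding splitting $s : K(-D_2) \to V$ with $\Phi_2 \circ s = \mathrm{id}$ is $s(dz) = u_2^{-1} e_2$. Its $\bar\partial$-obstruction projected onto $V_1$ gives the representative of $\xi$, namely $u_2^{-1}(A^1_2)^{0,1}\, e_1$, where the $A^i_j$ are the Chern connection coefficients in our frame. The crucial step is to identify $(A^1_2)^{0,1}$ with a multiple of $\bar\caQ$. I would do this by unwinding the formula \eqref{eq:Q} in the Toda frame: via \eqref{eq:TCH}, $\partial f'(Z)$ is identified with the map $e_3 \mapsto u_1 e_1$, and $\bar\partial f'(\bar Z) = \overline{\partial f''(Z)}$ with the map $e_3 \mapsto -\bar u_2 e_2$, so $\caQ(Z,Z,Z)$ essentially extracts the $e_2$-component of $\nabla^{\CH^2}_Z \partial f'(Z)$ — an off-diagonal Chern connection coefficient — scaled by $u_1 \bar u_2$. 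Hermitian compatibility of $\nabla_E$ (both $e_1, e_2$ carry norm $+1$) then relates this coefficient to $(A^1_2)^{0,1}$, showing it is proportional to $\bar\caQ/(u_1 u_2)$. Collecting factors with $\gamma_j = u_j^2\, dz\, \overline{dz}$, and undoing the identifications of $e_1$ and $dz$ from the previous paragraph, the Dolbeault representative of $\xi$ comes out to exactly $-\bar\caQ/(\gamma_1\gamma_2)$ in $\Omega^{0,1}(K^{-2}(D_1+D_2))$.

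For the equivalence $\xi = 0 \iff \caQ = 0$, the forward direction is immediate from the formula. For the converse, one first checks that $\caQ$ vanishes on $D_1 \cup D_2$ to the order required for $\caQ \in H^0(\Sigma_c, K^3(-D_1-D_2))$: at a point of $D_2$ the factor $\bar\partial f'$ in \eqref{eq:Q} vanishes, while at a point of $D_1$ a Taylor expansion of $\partial f'$ against $\nabla_Z \partial f'$ produces the matching vanishing. This same vanishing is what allows the representative $-\bar\caQ/(\gamma_1\gamma_2)$, defined a priori only on $\Sigma_c \setminus (D_1 \cup D_2)$, to extend smoothly across these points. Since Serre duality identifies $H^1(\Sigma_c, K^{-2}(D_1+D_2))$ with the dual of $H^0(\Sigma_c, K^3(-D_1-D_2))$, the pairing of $\xi$ with $\caQ$ is a nonzero real multiple of
\[
\int_{\Sigma_c} \frac{|\caQ|^2}{\gamma_1 \gamma_2}\, dA,
\]
which vanishes iff $\caQ \equiv 0$. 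The hard part will be the middle paragraph: tracking all sign and scalar conventions arising from the indefinite $(2,1)$ metric on $E$ (as opposed to the positive-definite $h$ on $T'\CH^2$) and the identification \eqref{eq:TCH}, so that the numerical constant in the final formula comes out exactly as $-1$ in front of $\bar\caQ/(\gamma_1\gamma_2)$ rather than any other nonzero scalar.
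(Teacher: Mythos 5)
Your proposal is correct, and its core computation coincides with the paper's: you use the same orthonormal frame (the paper's ``Toda frame'' $f_1,f_2,f_0$ with $f_1$ spanning $\operatorname{im}\Phi_1$ and $f_2$ its complement in $V$), and the key identity you need, namely that the off-diagonal $(0,1)$-connexion coefficient equals $-\bar Q/u_1u_2$, is exactly the content of the paper's frame equation $\g{Zf_1}{f_2}=Q/u_1u_2$ in Lemma \ref{lem:Todaframe}. Where you differ is in packaging. For the representative of $\xi$ you stay entirely in Dolbeault cohomology, reading off the class as the $\bar\partial$-obstruction of the metric splitting of \eqref{eq:V}; the paper instead passes through \v{C}ech cocycles (Lemma \ref{lem:xi}), solving $\partial a_j/\partial\bar z_j=-\bar Q_jz^{p_j+q_j}/u_{1j}^2u_{2j}^2$ locally and then applying the \v{C}ech-to-Dolbeault map. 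Your route is shorter, but note that the paper's detour is precisely where the behaviour at points of $D_1+D_2$ is controlled (via the modified frames $\tilde f_j$ and the factor $z^{p+q}$); your version must do the equivalent work to show the representative is a \emph{smooth} section of $\caE^{0,1}(K^{-2}(D_1+D_2))$ across those points, which your observation that $\caQ$ vanishes on $D_1+D_2$ to the multiplicity of the divisor does indeed supply. For the equivalence $\xi=0\iff\caQ=0$ your argument is genuinely different: you pair the class against $\caQ\in H^0(K^3(-D_1-D_2))$ under Serre duality and get a positive multiple of $\int|\caQ|^2/\gamma_1\gamma_2$, whereas the paper shows the representative $-\bar\caQ/\gamma_1\gamma_2$ is harmonic for the metric $B=\gamma_1\gamma_2$ and hence vanishes if exact. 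These are two faces of the same Hodge-theoretic fact (the harmonic representative is the metric dual of the holomorphic section), and both are complete; your version has the minor advantage of not requiring the computation of $\bar\partial^*$. The only outstanding risk in your plan is the one you already flag --- the overall scalar in front of $\bar\caQ/\gamma_1\gamma_2$ --- and the frame computation above confirms it comes out to $-1$ with the paper's conventions.
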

In particular, this means that $(\gamma_1,\gamma_2,\caQ)$ determines $(\Sigma_c,D_1,D_2,\xi)$, since we get the conformal
structure of the induced metric and the divisors $D_1,D_2$ from $\gamma_1,\gamma_2$. Therefore Theorems 
\ref{thm:minimal} and \ref{thm:xi} together have the following corollary. 
\begin{cor}
If two $\rho$-equivariant minimal surfaces have the same data $(\gamma_1,\gamma_2,\caQ)$ then they are identical up to ambient
isometry.
\end{cor}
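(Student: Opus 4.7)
The plan is to combine Theorems \ref{thm:minimal} and \ref{thm:xi} and reduce the corollary to the statement that the triple $(\gamma_1,\gamma_2,\caQ)$ determines the Higgs bundle quadruple $(\Sigma_c,D_1,D_2,\xi)$. Once that reduction is in hand, Theorem \ref{thm:minimal} says the quadruple specifies the $G$-equivalence class of $(\rho,f)$, so any two $\rho$-equivariant minimal surfaces sharing the same data must differ by an element of $G$ acting as an ambient isometry on the target.

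First I would read off the marked conformal structure $c$ and the divisors $D_1,D_2$ from $\gamma_1$ and $\gamma_2$ alone. In a local complex coordinate one has $\gamma_1+\gamma_2=(u_1^2+u_2^2)\,dz\,\overline{dz}$, which is a smooth nonnegative Hermitian form on $T^{1,0}\caD$ whose conformal factor agrees with that of the induced metric $\gamma$ up to the factor $\tfrac12$ built into the convention for $|dz|^2$ (see the comment following \eqref{eq:gammaj}). Since $u_1$ and $u_2$ are locally real-analytic and have only isolated zeros, the form $\gamma_1+\gamma_2$ is nondegenerate on a dense open set and therefore pins down the conformal class $c$ uniquely. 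The divisor $D_1$ (respectively $D_2$) is then recovered as the zero divisor of $u_1$ (respectively $u_2$), equivalently the locus at which the sesquilinear form $\gamma_1$ (respectively $\gamma_2$) degenerates.

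Next I would appeal to Theorem \ref{thm:xi} to recover the extension class $\xi$: its Dolbeault representative is the $(0,1)$-form $-\bar\caQ/(\gamma_1\gamma_2)$ with values in $K^{-2}(D_1+D_2)$, a tensor built entirely from the given triple, so $\xi\in H^1(\Sigma_c,K^{-2}(D_1+D_2))$ is determined. Feeding the resulting quadruple $(\Sigma_c,D_1,D_2,\xi)$ back into Theorem \ref{thm:minimal} then yields a unique $G$-equivalence class of pairs $(\rho,f)$, completing the argument.

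There is essentially no hard step here beyond Theorems \ref{thm:minimal} and \ref{thm:xi} themselves; the corollary is a brief unpacking of what those theorems have already proved. The only point requiring mild care is the recovery of $c$ from $\gamma_1+\gamma_2$, for which one must invoke that $f$ is an immersion (or a branched immersion with only isolated branch points) to guarantee that the conformal factor $u_1^2+u_2^2$ is positive on a dense open set.
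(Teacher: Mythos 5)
Your proposal is correct and follows exactly the paper's argument: the paper likewise observes that $\gamma_1,\gamma_2$ recover the conformal structure and the divisors $D_1,D_2$, that Theorem \ref{thm:xi} recovers $\xi$ from $-\bar\caQ/\gamma_1\gamma_2$, and that Theorem \ref{thm:minimal} then pins down $(\rho,f)$ up to $G$-equivalence. Your extra care about the $|dz|^2$ convention and the density of the nondegeneracy locus is a harmless elaboration of the same one-line reduction.
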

Before we begin the proof of Theorem \ref{thm:xi} we describe the local Toda frames which link the minimal surface data to the 
local geometry of the Higgs bundle. Fix a pair $(\rho,f)$ and let $E=V\oplus 1$ be the Higgs bundle over
$\Sigma_c$ corresponding to them as above, equipped with its Higgs field $\Phi$, its $\C^{2,1}$ metric and the Chern 
connexion $\nabla_E$.  
\begin{lem}\label{lem:Todaframe}
Let $(U,z)$ be holomorphic chart on $\Sigma_c$ for which $U$ contains no complex or anti-complex points
of $f$. Then over $U$ there is a local trivialisation $\varphi:E|U\to U\times\C^3$ for which
\begin{equation}\label{eq:dbarE}
\varphi\circ\bar\partial_E\circ\varphi^{-1}= 
d\bar z\left[\frac{\partial}{\partial\bar z} +
\begin{pmatrix} -\bar Z\log u_1 & -\bar Q/u_1u_2 & 0\\ 0 & \bar Z\log u_2 & 0\\ 0 & 0 & 0
\end{pmatrix}\right]
\end{equation}
and 
\begin{equation}\label{eq:Phi}
\varphi\circ\Phi\circ\varphi^{-1} = 
\begin{pmatrix} 0 & 0 & u_1\\ 0 & 0 & 0\\ 0 & u_2 & 0 \end{pmatrix}dz,
\end{equation}
where $\bar Z=\partial/\partial\bar z$ and $u_1,u_2,Q$ are given by \eqref{eq:u} and \eqref{eq:Q}.
\end{lem}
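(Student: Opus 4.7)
The plan is to build a specific pseudo-unitary smooth local frame $(\sigma_1,\sigma_2,\sigma_3)$ of $E|_U$ adapted to the Higgs field, and then to read off the matrices of $\Phi$ and $\bar\partial_E$ directly from the geometric data. Let $\sigma_3$ be the trivialising section of $L=1$ normalised by $\g{\sigma_3}{\sigma_3}=-1$; under the normalisation fixed before the lemma, $\nabla_E$ restricted to $L$ is the canonical flat connexion, so $\bar\partial_E\sigma_3=0$. Since $U$ contains no anti-complex or complex points, $\Phi_1$ and $\Phi_2$ are nowhere vanishing on $U$, and by the proof of Theorem~\ref{thm:minimal} we have $\Phi_2\Phi_1=0$. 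Define
\[
\sigma_1 := u_1^{-1}\,\Phi_1(\sigma_3)(\partial/\partial z),
\]
a unit section of $V$ in the direction of $\partial f'(\partial/\partial z)$, and let $\sigma_2$ be the unit section of $V$ orthogonal to $\sigma_1$ whose phase is fixed by $\Phi_2(\sigma_2)(\partial/\partial z)=u_2\sigma_3$; this choice is possible because $\Phi_2\sigma_1=0$ and $u_2>0$ on $U$. In this frame the matrix of $\Phi$ is exactly \eqref{eq:Phi} by construction.

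To pin down $\bar\partial_E$, I would use that $\nabla_E$ preserves the splitting $V\oplus L$, so together with $\bar\partial_E\sigma_3=0$ the third row and column of the matrix of $\bar\partial_E$ vanish, reducing the problem to determining the four entries $A_{ij}$ of the $V$-block. Three of these are forced by holomorphicity of the Higgs field. Since $\Phi_1(\sigma_3)=u_1\sigma_1\otimes dz$ is a holomorphic section of $K\otimes V$ and $dz$ is holomorphic, $\bar\partial_V(u_1\sigma_1)=0$; expanding via Leibniz forces $A_{11}=-\bar Z\log u_1$ and $A_{21}=0$. Applying $\bar\partial\Phi_2=0$ to $\sigma_2$, i.e. $\bar\partial_{KL}(\Phi_2\sigma_2)=\Phi_2(\bar\partial_V\sigma_2)$, and using $\Phi_2\sigma_1=0$, yields $A_{22}=\bar Z\log u_2$.

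The entry $A_{12}$ is not constrained by $\bar\partial\Phi=0$; it is instead determined by the cubic holomorphic differential $\caQ$. I would compute $\caQ(Z,Z,Z)$ in the Toda frame using \eqref{eq:Q}. Under \eqref{eq:TCH}, $\partial f'(Z)\in T'\CH^2=\Hom(L,V)$ corresponds to $\sigma_3\mapsto u_1\sigma_1$, while $\bar\partial f'(\bar Z)=\overline{\partial f''(Z)}$ is the pseudo-Hermitian adjoint of the map $\sigma_2\mapsto u_2\sigma_3$, and so sends $\sigma_3\mapsto -u_2\sigma_2$ owing to the $-1$ sign from $\g{\sigma_3}{\sigma_3}$. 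Meanwhile $\nabla_Z^{\CH^2}$ corresponds to the Chern connexion on $L^*\otimes V$, and since $\sigma_3^*$ is parallel it reduces to the Chern connexion on $V$, whose $(1,0)$-block in our unitary $V$-frame is $-\overline{A_V}^{\,T}$. Hence the $\sigma_2$-coefficient of $\nabla_Z(u_1\sigma_1)$ is $-u_1\overline{A_{12}}$. Pairing this via $h$ with $\bar\partial f'(\bar Z)$ and equating the result to the coefficient $Q$ of $\caQ=Q\,dz^3$ then yields $A_{12}=-\bar Q/(u_1u_2)$.

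The main obstacle is the careful sign bookkeeping in this last step: the indefinite metric on $\C^{2,1}$ introduces one sign via the pseudo-Hermitian adjoint on $\Hom(V,L)\to\Hom(L,V)$, the sesquilinearity of $h$ introduces another, and the phase convention for $\sigma_2$ must be consistent with both. Getting the sign exactly right --- $-\bar Q/(u_1u_2)$ rather than its opposite --- requires tracking all three conventions simultaneously. The remaining steps are essentially routine Leibniz-rule manipulations with the Chern connexion and the holomorphicity of $\Phi$.
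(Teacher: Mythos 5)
Your construction is essentially the paper's: the frame $(\sigma_1,\sigma_2,\sigma_3)$ you build is exactly the Toda frame $(f_1,f_2,f_0)$ of the paper (there $\sigma_1=\pi_0^\perp\nabla_Z f_0$ and $\sigma_2=\pi_0^\perp\nabla_{\bar Z}f_0$, which coincide with your normalisations), and your derivation of $A_{11},A_{21},A_{22}$ and of the vanishing third row and column is a correct, slightly reorganised version of what the paper does. The only difference of route is that the paper computes the $(1,0)$-part of the full flat connexion $\nabla$ via the inner products $\g{\nabla_Z f_i}{f_j}$ and then reads off $\bar\partial_E$ by pseudo-unitarity, whereas you determine the $(0,1)$-part of $\nabla_E$ directly from holomorphicity of $\Phi$; these are equivalent.

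The genuine problem is in the one step you yourself flag as delicate: as written, your computation of $A_{12}$ produces $+\bar Q/(u_1u_2)$, not $-\bar Q/(u_1u_2)$. The error is the claim that $\bar\partial f'(\bar Z)$ sends $\sigma_3\mapsto -u_2\sigma_2$ because it is the indefinite Hermitian adjoint of $\Phi_2(Z)$. Under the paper's conventions, conjugation on $\fm^\C\subset\End(\C^{2,1})$ is \emph{minus} the indefinite Hermitian transpose --- this is precisely why its fixed set is $\fu(2,1)$ rather than the self-adjoint matrices --- so $\bar\partial f'(\bar Z)=\overline{\partial f''(Z)}=-\Phi_2(Z)^\dagger$. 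The reliable way to see the sign is to bypass the adjoint altogether and use the definition $\bar\partial f'(\bar Z)\sigma_3=\pi_0^\perp\nabla_{\bar Z}\sigma_3$ together with metric compatibility: its $\sigma_2$-component is
\[
\g{\nabla_{\bar Z}\sigma_3}{\sigma_2}=\bar Z\g{\sigma_3}{\sigma_2}-\g{\sigma_3}{\nabla_Z\sigma_2}
=-\g{\sigma_3}{u_2\sigma_3}=+u_2 ,
\]
using your normalisation $\pi_0\nabla_Z\sigma_2=\Phi_2(Z)\sigma_2=u_2\sigma_3$ and $\g{\sigma_3}{\sigma_3}=-1$. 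With $\bar\partial f'(\bar Z)\sigma_3=+u_2\sigma_2$ your pairing gives $Q=-u_1u_2\overline{A_{12}}$, hence $A_{12}=-\bar Q/(u_1u_2)$ as required. So the architecture of your argument is sound and the fix is local, but the step you deferred to ``sign bookkeeping'' is exactly where the proposal, taken at face value, comes out wrong.
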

\begin{proof}
For notational convenience, let us set $\ell_0=1\subset E$. Then $V=\ell_0^\perp$
with respect to the $\C^{2,1}$ metric, and $\ell_0$ is the section of $\P E_-$ which represents $f$. Thus $\partial
f',\bar\partial f'\in\Omega_\Sigma^1(\Hom(\ell_0,\ell_0^\perp))$ and they determine line sub-bundles
$\ell_1,\ell_2\subset\ell_0^\perp$ via their images, i.e.,
\begin{equation}\label{eq:harmseq}
\ell_2\otimes\bar K\stackrel{\bar\partial f'}{\leftarrow}\ell_0\stackrel{\partial f'}{\rightarrow}\ell_1\otimes K.
\end{equation}
These two sub-bundles are orthogonal since $f$ is conformal. 
We give each of these line bundles the holomorphic structure it inherits from $\bar\partial_E$, i.e., a local section
$\sigma_j$ of $\ell_j$ is holomorphic when $\pi_j\bar\partial_E\sigma_j=0$, where $\pi_j:E\to\ell_j$ is the orthogonal
projection. Note that since $\Phi\in\Hom(\ell_0,\ell_0^\perp)$ this holomorphic structure on each $\ell_j$ agrees with the
one induced by the projectively flat connexion $\nabla=\nabla_E+\Phi+\Phi^\dagger$. Then $\partial f'=\Phi_1$ is a holomorphic 
map, while $\bar\partial f'=\Phi_2^\dagger$ is anti-holomorphic, since $f$ is harmonic. 
Since $\nabla$ induces the canonical flat connexion on $1$ we can choose a globally flat section $f_0$ of $1$, i.e., $\g{\nabla
f_0}{f_0}=0$ with $\g{f_0}{f_0}=-1$.

Now fix a chart $(U,z)$ and for any section $\sigma$ of $E|U$ write $X\sigma$ to mean $\nabla_X\sigma$ with respect to the
projectively flat connexion $\nabla$. Then the maps above can be written locally as
\[
\ell_2\stackrel{\pi_0^\perp\bar Z}{\leftarrow}\ell_0\stackrel{\pi_0^\perp Z}{\rightarrow}\ell_1.
\]
Define local sections $\sigma_j\in\Gamma(U,\ell_j)$, $j=1,2$ by 
\[
\sigma_1 = \partial f'(Z)f_0 = \pi_0^\perp Zf_0,\quad \sigma_2 = \bar\partial f'(\bar Z)f_0 = \pi_0^\perp\bar Z f_0.
\]
We assume that $U$ does not contain any complex or anti-complex points, and therefore the functions $u_1,u_2$ in
\eqref{eq:u} are non-vanishing.
Clearly $|\sigma_j|=u_j$, and we set $f_j = \sigma_j/u_j$. Then $f_1,f_2,f_0$ is a $U(2,1)$
frame for $E$. We claim that these satisfy the equations
\begin{align}
Zf_1 & =  (Z\log u_1)f_1 + (Q/u_1u_2)f_2,\label{eq:frame1}\\
Zf_2 & =  -(Z\log u_2)f_2 + u_2f_0,\label{eq:frame2}\\
Zf_0 & =  u_1f_1.\label{eq:frame0}
\end{align}
The last of these is obvious, since $Zf_0 = \pi_0^\perp Zf_0$. 

Next consider
\[
Zf_1 = \g{Zf_1}{f_1}f_1+\g{Zf_1}{f_2}f_2.
\]
Since $f_0$ is holomorphic, so is $\sigma_1$, and therefore $Z\g{\sigma_1}{\sigma_1} = \g{Z\sigma_1}{\sigma_1}$. Thus
\begin{align*}
\g{Zf_1}{f_1} & = u_1^{-1}\g{Z(u_1^{-1}\sigma_1)}{\sigma_1}\\
& = -Z\log(u_1) + u_1^{-2}Z\g{\sigma_1}{\sigma_1}\\
& = Z\log(u_1).
\end{align*}
Now
\begin{align*}
\g{Zf_1}{f_2} & = \g{Z(\sigma_1/u_1)}{\sigma_2/u_2}\\
& = \g{Z\sigma_1}{\sigma_2}/u_1u_2\\
& = \g{\pi_0^\perp Z\pi_0^\perp Zf_0}{\pi_0^\perp\bar Zf_0}/u_1u_2.
\end{align*}
On the other hand, using \eqref{eq:Q} and the fact that $\nabla^{\CH^2}$ is the connexion on
$\Hom(\ell_0,\ell_0^\perp)$ induced by the connexions on each bundle $\ell_0,\ell_0^\perp$, 
we have
\begin{eqnarray}
Q& =& \g{[\nabla^{\CH^2}_Z\partial f'(Z)]f_0}{\bar\partial f'(\bar Z)) f_0} \notag \\
& =& \g{\pi_0^\perp Z[\partial f'(Z)f_0] -\partial f'(Z)[\pi_0Zf_0] }{\bar\partial f'(\bar Z)) f_0} \notag \\
&=& \g{\pi_0^\perp Z\pi_0^\perp Zf_0}{\pi_0^\perp\bar Z f_0}
-\g{\pi_0^\perp Z\pi_0 Zf_0}{\pi_0^\perp\bar Z f_0}.\label{eq:Qf0}
\end{eqnarray}
The second term in the last line vanishes since $\ell_{-1}$ is orthogonal to $\ell_1$. Thus
$\g{Zf_1}{f_2} = Q/u_1u_2$.

Finally, consider
\[
Zf_2 = \g{Zf_2}{f_2}f_2 - \g{Zf_2}{f_0}f_0.
\]
For the first term we note that $\sigma_2$ is anti-holomorphic since $f_0$ is, so
\[
0=\g{Z\sigma_2}{f_2} = Z u_2 +u_2\g{Zf_2}{f_2}.
\]
Since $\g{f_2}{f_0}=0$ the second term yields
\[
- \g{Zf_2}{f_0} = \g{f_2}{\bar Z f_0} = \g{f_2}{\sigma_2} = u_2.
\]
Now that we have established the equations for the frame $f_1,f_2,f_0$, we take $\varphi$ to be the 
corresponding trivialisation. In this frame the equations \eqref{eq:frame1}-\eqref{eq:frame0} show us that
\begin{equation}\label{eq:nabla}
\varphi\circ\nabla^{1,0}\circ\varphi^{-1} =
dz\left(\frac{\partial}{\partial z} +
\begin{pmatrix} Z\log u_1 & 0 & u_1\\ Q/u_1u_2 &  -Z\log u_2 & 0\\ 0 & u_2 & 0
\end{pmatrix}\right)
\end{equation}
Thus \eqref{eq:dbarE} and \eqref{eq:Phi} follow.
\end{proof}
At complex or anti-complex points we require a slight adjustment of the frame above. We may choose the chart $(U,z)$ so
that $U$ contains precisely one complex or anti-complex point, at $z=0$. Thus one of $\sigma_1,\sigma_2$ vanishes at $z=0$.
To treat all cases simultaneously, let $p,q$ be the non-negative integers for which
$z^{-q}\sigma_1$ and $\bar z^{-p}\sigma_2$ are respectively holomorphic and anti-holomorphic, and do not vanish at 
$z=0$: at most one of $p,q$ is non-zero.  It follows that the
functions $u_1/|z|^q = |z^{-q}\sigma_1|$ and $u_2/|z|^p = |\bar z^{-p}\sigma_2|$ do not vanish. By setting
\[
\tilde f_1 = \frac{z^{-q}\sigma_1}{|z^{-q}\sigma_1|} = (|z|/z)^qf_1,\quad
\tilde f_2 = (|z|/\bar z)^pf_2= (z/|z|)^pf_2,
\]
we obtain a $U(2)$ frame $\tilde f_1,\tilde f_2$ for $V$ throughout $U$, with corresponding trivialisation $\tilde\varphi$. 
This is easiest to work with by writing it as
\begin{equation}\label{eq:S}
\tilde\varphi = S(\varphi|V),\quad S = \begin{pmatrix} (z/|z|)^q &0\\
0&(|z|/z)^p \end{pmatrix}.
\end{equation}
The next step towards Theorem \ref{thm:xi} is to represent the extension class $\xi$ of $V$ using a $1$-cocycle with
values in $K^{-2}(D_1+D_2)$. For this computation, we choose an atlas $\caU=\{(U_j,z_j)\}$ for $\Sigma_c$ of contractible
charts for which each complex or anti-complex point lies only in one chart, at $z_j=0$,
and for simplicity assume charts containing distinct complex or anti-complex points are disjoint.
\begin{lem}\label{lem:xi}
There is a holomorphic atlas $\caU=\{(U_j,z_j)\}$ for $\Sigma_c$ in which $\xi$ is represented by the $1$-cocycle
$\{(\xi_{jk},U_j,U_k)\}\in H^1(\caU,K^{-2}(D_1+D_2))$ for which
\begin{equation}\label{eq:xijk}
\xi_{jk} = a_jz^{-(p_j+q_j)} dz_j^{-2} - a_kdz_k^{-2}\ \text{on}\ U_j\cap U_k,
\end{equation}
for smooth functions $a_j$ on $U_j$ satisfying 
\[
\partial a_j/\partial\bar z_j = -\frac{\bar Q_jz_j^{p_j+q_j}}{u_{1j}^2u_{2j}^2},
\]
(assuming $U_k$ contains no complex or anti-complex points).
\end{lem}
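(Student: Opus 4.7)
The plan is to construct local holomorphic splittings of the extension \eqref{eq:V} using the Toda frame of Lemma \ref{lem:Todaframe} (suitably modified near $D_1\cup D_2$), and then to extract $\xi_{jk}$ from the differences of these splittings on overlaps. I first fix the atlas $\caU=\{(U_j,z_j)\}$ of contractible holomorphic charts in which each chart contains at most one point of $D_1+D_2$ (placed at $z_j=0$) and charts around distinct special points are disjoint; under this choice, every overlap $U_j\cap U_k$ with $j\neq k$ has at least one of $U_j,U_k$ free of special points, so the stated hypothesis on $U_k$ is not restrictive.

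On a chart $U_k$ free of special points ($p_k=q_k=0$), the Toda frame gives $\Phi_2(f_{2k})=u_{2k}\,dz_k\cdot f_{0k}$ and $\Phi_1(\partial/\partial z_k)=u_{1k} f_{1k}$, so $f_{2k}/u_{2k}$ is a smooth lift of the holomorphic frame $dz_k$ of $K(-D_2)=K$ while $u_{1k}f_{1k}$ is a holomorphic frame of the sub-bundle $\Phi_1(K^{-1}(D_1))=\Phi_1(K^{-1})$. Imposing $\bar\partial_E \tilde s_k = 0$ on the ansatz $\tilde s_k = a_k u_{1k}\,f_{1k} + f_{2k}/u_{2k}$ and expanding via \eqref{eq:dbarE}, the $f_{2k}$-component cancels automatically and the $f_{1k}$-component, after the substitution $\beta_k=a_k u_{1k}$, reduces to a scalar equation equivalent to $\partial a_k/\partial \bar z_k = -\bar Q_k/(u_{1k}^2 u_{2k}^2)$.

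On a chart $U_j$ containing a special point, write $u_{1j}=|z_j|^{q_j}\tilde u_{1j}$ and $u_{2j}=|z_j|^{p_j}\tilde u_{2j}$ for smooth non-vanishing $\tilde u_{1j},\tilde u_{2j}$, and conjugate $\bar\partial_E|V$ by the matrix $S$ of \eqref{eq:S}. The diagonal entries of the conjugated connection form become $-\bar Z_j\log\tilde u_{1j}$ and $\bar Z_j\log\tilde u_{2j}$, while the off-diagonal $\bar Q_j$-entry acquires an extra factor $(z_j/|z_j|)^{p_j+q_j}$; meanwhile $\Phi_2(\tilde f_{2j}/\tilde u_{2j})=z_j^{p_j}\,dz_j\cdot f_{0j}$ is a smooth lift of the holomorphic frame $z_j^{p_j}dz_j$ of $K(-D_2)$ and $\tilde u_{1j}\tilde f_{1j}=\Phi_1(z_j^{-q_j}\partial/\partial z_j)$ is a holomorphic frame of $\Phi_1(K^{-1}(D_1))$. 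The analogous ansatz $\tilde s_j = a_j\tilde u_{1j}\tilde f_{1j} + \tilde f_{2j}/\tilde u_{2j}$, combined with the identity $u_{1j}u_{2j}\tilde u_{1j}\tilde u_{2j}=u_{1j}^2 u_{2j}^2 /|z_j|^{p_j+q_j}$, then yields $\partial a_j/\partial \bar z_j = -\bar Q_j z_j^{p_j+q_j}/(u_{1j}^2 u_{2j}^2)$.

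On an overlap $U_j\cap U_k$ with $U_k$ free of special points, the difference $\tau_j-\tau_k$ of the local holomorphic splittings satisfies $\Phi_2\circ(\tau_j-\tau_k)=0$ and hence defines a holomorphic section of $\Hom(K(-D_2),\Phi_1(K^{-1}(D_1)))\simeq K^{-2}(D_1+D_2)$. Applying it to the frame $dz_k$ and using the transition identities $u_{1k}f_{1k}=(dz_j/dz_k)\,u_{1j}f_{1j}$, $f_{2k}/u_{2k}=(dz_k/dz_j)\,f_{2j}/u_{2j}$, together with $\tilde u_{1j}\tilde f_{1j}=z_j^{-q_j}u_{1j}f_{1j}$ and $\tilde f_{2j}/\tilde u_{2j}=z_j^{p_j}f_{2j}/u_{2j}$ on the overlap, the $f_2$-components cancel and the residual $f_1$-part rewrites, in the frames $dz_j^{-2}$ and $dz_k^{-2}$ of $K^{-2}(D_1+D_2)$ on each chart, as exactly \eqref{eq:xijk}. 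The cocycle condition on triple overlaps is automatic from the difference structure $\xi_{jk}=\alpha_j-\alpha_k$ with $\alpha_j=a_j z_j^{-(p_j+q_j)}dz_j^{-2}$, and the resulting class represents $\xi$ by the standard identification of extension classes with Čech cocycles of local holomorphic splittings. The main obstacle is the computation in charts containing a special point: one must track how conjugation by $S$ acts on both $\bar\partial_E$ and $\Phi$ consistently with the holomorphic frames $z_j^{p_j}dz_j$ of $K(-D_2)$ and $z_j^{-q_j}\partial/\partial z_j$ of $K^{-1}(D_1)$, so that exactly the factor $z_j^{p_j+q_j}$ emerges in the source term of the $\bar\partial$-equation for $a_j$.
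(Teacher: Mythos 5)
Your proposal is correct in substance and follows essentially the same route as the paper: the paper's upper-triangular gauge transformation $R_j$ carrying the (modified) Toda frame to a holomorphically trivial frame is exactly your local holomorphic splitting in matrix form, with the same $\bar\partial$-equation for $a_j$ (solvable by the $\bar\partial$-Poincar\'e lemma since $Q_j$ vanishes to order $p_j+q_j$) and the same extraction of the cocycle from the off-diagonal entry of the transition matrices. One bookkeeping caveat: with the ansatz $\tilde s_k = a_k u_{1k}f_{1k}+f_{2k}/u_{2k}$ and \eqref{eq:dbarE}, holomorphicity actually forces $\partial a_k/\partial\bar z_k = +\bar Q_k/u_{1k}^2u_{2k}^2$, so to land on the signs in the lemma you should take the splitting to be $-a_k u_{1k}f_{1k}+f_{2k}/u_{2k}$ (which is what the paper's $R_j^{-1}$ produces) or, equivalently, adjust the sign convention in $\xi_{jk}=\alpha_j-\alpha_k$.
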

\begin{proof}
For simplicity, set $w = z/|z|$. Using the previous lemma and the transformation \eqref{eq:S} we have, on $V$ 
over a single chart $(U,z)$, 
\[
\tilde\varphi\circ\bar\partial_E\circ\tilde\varphi^{-1} = 
d\bar z\left[\frac{\partial}{\partial\bar z} +
\begin{pmatrix} -\bar Z\log (u_1/|z|^{q}) & -\bar Qw^{p+q}/u_1u_2 \\ 0 & \bar Z\log(u_2/|z|^{p}) 
\end{pmatrix} \right].
\]
Now suppose $U_j\cap U_k\neq \emptyset$, and assume
without loss of generality that $U_k$ contains no complex or anti-complex points, so that $p_k=0=q_k$ and
$\tilde\varphi_k = \varphi_k|V$. Then we have transition relations $\tilde\varphi_j=c_{jk}\tilde\varphi_k$ where
\[
c_{jk} = \begin{pmatrix} w_j^{q_j}\frac{dz_j/dz_k}{|dz_j/dz_k|} & 0 \\ 0 & w_j^{-p_j}\frac{|dz_j/dz_k|}{dz_j/dz_k} 
\end{pmatrix},
\]
where we have used that fact that
\[
\frac{d\bar z_j/d\bar z_k}{|d\bar z_j/d\bar z_k|} = \overline{\frac{dz_j/dz_k}{|dz_j/dz_k|}}= 
\frac{|dz_j/dz_k|}{dz_j/dz_k}.
\]
It follows that, as a smooth bundle, $V\simeq K^{-1}(D_1)\oplus K(-D_2)$.

To elucidate its holomorphic structure we find local trivialisations $\chi_j$ in which
\[
\chi_j\circ \bar\partial_E\circ\chi_j^{-1} = d\bar z_j\frac{\partial}{\partial\bar z_j},
\]
i.e., we seek local gauge transformations $\chi_j = R_j\tilde\varphi_j$ for which
\[
R_j[\tilde\varphi\circ\bar\partial_E\circ\tilde\varphi^{-1}]R_j^{-1} = d\bar z_j
 \frac{\partial}{\partial\bar z_j}.
\]
A straightforward calculation shows that this is achieved by taking
\[
R_j = \begin{pmatrix} 1&a_j\\0&1\end{pmatrix}
\begin{pmatrix} (u_{1j}/|z_j|^{q_j})^{-1}&0\\0&u_{2j}/|z_j|^{p_j}\end{pmatrix},
\]
where 
\begin{equation}\label{eq:aj}
\frac{\partial a_j}{\partial\bar z_j} = -\frac{\bar Q_jz^{p_j+q_j}}{u_{1j}^2u_{2j}^2},
\end{equation}
throughout $U_j$. Such a function $a_j$ exists, by the $\bar\partial$-Poincar\'{e} Lemma, because the right 
hand side is smooth throughout $U_j$ since $Q_j$ 
vanishes to at least order $p_j+q_j$ at $z_j=0$ by \eqref{eq:Q}. Thus the transition between two such 
charts $\chi_j=b_{jk}\chi_k$ is given by
\[
b_{jk} = R_jc_{jk}R_k^{-1} = \begin{pmatrix} z_j^{q_j}dz_j/dz_k & \lambda_{jk}\\ 0 & z_j^{-p_j}dz_k/dz_j\end{pmatrix},
\]
where
\[
\lambda_{jk} = a_jz_j^{-p_j}\frac{dz_k}{dz_j} - a_kz_j^{q_j}\frac{dz_j}{dz_k},
\]
and this is holomorphic on $U_j\cap U_k$ (this follows easily from \eqref{eq:aj}). Now using the same 
convention as \cite[p74]{Gun} this determines the $1$-cocycle with values in $K^{-2}(D_1+D_2)$ given by
\begin{align*}
\xi_{jk} & = (z_j^{q_j}\frac{dz_j}{dz_k})^{-1}\lambda_{jk}dz_k^{-2} \\
& = a_jz_j^{-(p_j+q_j)}dz_j^{-2} - a_kdz_k^{-2}.
\end{align*}
\end{proof}
\begin{proof}[Proof of Theorem \ref{thm:xi}]
For notational simplicity, set $\caL = K^{-2}(D_1+D_2)$. In $U_j$ the quantity 
\[
z_j^{-(p_j+q_j)}dz_j^{-2}
\]
is a local holomorphic section of $\caL$. Therefore we have local smooth sections
\[
\eta_j=a_jz_j^{-(p_j+q_j)}dz_j^{-2} \in \Gamma(U_j,\caE^0(\caL)),
\]
which provide a $0$-cochain $\eta$ for the sheaf $\caE^0(\caL)$ of locally smooth sections of $\caL$.
By \eqref{eq:xijk} the $1$-cocyle $\{(\xi_{jk},U_j,U_k)\}$ is, as a smooth cocycle, the coboundary of $\eta$. 
Now recall that the Dolbeault isomorphism $H^1(\caU,\caL)\to H^{0,1}(\caU, \caL)$
is derived from the short exact sequence
\[
0\to \caO(\caL)\to\caE^0(\caL)\stackrel{\bar\partial}{\to} \caE^{0,1}(\caL)\stackrel{\bar\partial}{\to} 0,
\]
by taking the cohomology class of $\bar\partial\eta$. But
\[
\bar\partial\eta_j = -\frac{\bar Q_j}{u_{1j}^2u_{2j}^2}dz_j^{-2}\bar dz_j.
\]
So $\bar\partial\eta$ is represented by the cohomology class of the smooth form 
\[
-\bar\caQ/\gamma_1\gamma_2\in\Gamma(\Sigma_c,\caE^{0,1}(\caL)).
\]
Finally, we show that this vanishes when it is $\bar\partial$-exact, by showing that it is harmonic with 
respect to Hermitian metric $B=\gamma_1\gamma_2$ on $\caL$. With respect to
the Hodge inner product on $\caE^{*,*}(\caL)$ determined by $B$ on $\caL$ and the induced metric $\gamma$ 
on $\Sigma_c$, the adjoint of $\bar\partial:\caE^0(\caL)\to\caE^{0,1}(\caL)$ is given by 
\[
\caE^{0,1}(\caL)\stackrel{\bar\partial^*}{\to}\caE^0(\caL);\quad
\bar\partial^* = -\bar\star\bar\partial\bar\star,
\]
where $\bar\star$ is the conjugate linear Hodge star operator for our choice of metrics (see, for example, \cite[p168]{Wel}). 
It therefore suffices for us to show that 
\[
\bar\partial\bar\star(\bar\caQ/\gamma_1\gamma_2)=0.
\]
Let $(U,z)$ be a chart in which the divisor $D_1+D_2$ has at most one point, of degree $d$ at $z=0$.
Then $\tau= z^{-d}dz^{-2}$ is a local holomorphic trivialising section of $\caL$ over $U$ and
\[
\caE^{0,1}(U,\caL)\stackrel{\bar\star}{\to}\caE^{1,0}(U,\caL^*);\quad a\tau d\bar z\mapsto -i\bar aB(\cdot,\tau)dz,
\]
for any locally smooth complex function $a$ over $U$. Now in $U$ we write 
\[
\bar\caQ/\gamma_1\gamma_2 = \frac{\bar Q z^d}{u_1^2u_2^2}\tau d\bar z,
\]
and therefore
\[
\bar\partial\bar\star(\bar\caQ/\gamma_1\gamma_2) = 
-i\frac{\partial}{\partial\bar z}\left(\frac{Q\bar z^d\|\tau\|^2}{u_1^2u_2^2}\right)(z^d dz^2) d\bar z\wedge dz.
\]
But
\[
\|\tau\|^2 = B(\tau,\tau) = |z|^{-2d}u_1^2u_2^2,
\]
and therefore $\bar\partial\bar\star(\bar\caQ/\gamma_1\gamma_2)=0$ throughout $U$ since $Q$ is holomorphic.
\end{proof}
We finish this section by giving a global expression which relates the curvatures of the immersion $f$ to the norms of
$\caQ$ and $\caQ/\gamma_1\gamma_2$ with respect to the induced metric. This in turn provides an area bound for such
immersions.

First we note that a straightfoward calculation using \eqref{eq:nabla} shows that the projective flatness of the 
connexion $\nabla$ is equivalent to the local equations
\begin{align}
Z\bar Z\log u_1^2 & = 2u_1^2 + |Q|^2/u_1^2u_2^2 - u_2^2,\label{eq:Toda1}\\
Z\bar Z\log u_2^2 & = 2u_2^2 + |Q|^2/u_1^2u_2^2 - u_1^2,\label{eq:Toda2}
\end{align}
in a chart $(U,z)$ containing no complex or anti-complex points. 
These are the appropriate version of the Toda equations for this geometry (cf.\ \cite{BolW} for the $\CP^n$ version). They 
are also related to the two equations Wolfson derived for the \emph{\Kah angle} in \cite{Wol89}. Recall that the 
\Kah angle is
a continuous function $\theta:\Sigma_c\to\R$ for which $f^*\omega = \cos(\theta)v_\gamma$, where $v_\gamma$ is the area
form for the induced metric. Wolfson showed that, except at complex or anti-complex points where $\theta$ is not
differentiable, 
\begin{align}
i\partial\bar\partial \log\tan^2(\theta/2) & = f^*\Ric,\label{eq:tan}\\
i\partial\bar\partial \log\sin^2(\theta) & = (\kappa_\gamma + \kappa^\perp)v_\gamma,\label{eq:sin}
\end{align}
where $\Ric$ is the Ricci form and $\kappa_\gamma,\kappa^\perp$ are the Gaussian and normal curvatures of the immersion 
respectively. In our case
we have, from \eqref{eq:gamma}, $\cos(\theta) = (u_1^2-u_2^2)/(u_1^2+u_2^2)$. Therefore
\[
\tan^2(\theta/2) = \frac{u_2^2}{u_1^2},\quad \sin^2(\theta) = \frac{4u_1^2u_2^2}{(u_1^2+u_2^2)^2},
\]
and \eqref{eq:tan} is just the difference of \eqref{eq:Toda1} and \eqref{eq:Toda2} since $f^*\Ric =-6 f^*\omega$. 
Now 
\begin{align*}
Z\bar Z\log\sin^2\theta & = Z\bar Zu_1^2 + Z\bar Zu_2^2 - 2Z\bar Z\log(u_1^2+u_2^2)\\
 = & u_1^2 + \frac{2|Q|^2}{u_1^2u_2^2} + u_2^2 - \frac{2}{u_1^2+u_2^2}[Z\bar Z\log  (u_1^2+u_2^2)](u_1^2+u_2^2),
\end{align*}
using \eqref{eq:Toda1} and \eqref{eq:Toda2}. The second term on the right contains the local expression for $\kappa_\gamma$,
so substituting this equation into \eqref{eq:sin} reveals that 
\[
\kappa^\perp -\kappa_\gamma = 2\left(1+\frac{2|Q|^2}{u_1^2u_2^2(u_1^2+u_2^2)}\right).
\]
The right hand side can be written in terms of the quantities
\[
\|\caQ\|_\gamma = \frac{2\sqrt{2}|Q|}{(u_1^2+u_2^2)^{3/2}},\quad \|\frac{\caQ}{\gamma_1\gamma_2}\|_\gamma = 
\frac{|Q|\sqrt{u_1^2+u_2^2}}{\sqrt{2}u_1^2u_2^2}.
\]
It is easy to check that their product is smooth everywhere, so that we obtain the global identity
\begin{equation}\label{eq:curvatures}
\kappa^\perp -\kappa_\gamma = 2(1+\|\frac{\caQ}{\gamma_1\gamma_2}\|_\gamma\|\caQ\|_\gamma).
\end{equation}
By integration over $\Sigma$, and using \eqref{eq:chi},  we arrive at the following conclusion.
\begin{prop}\label{prop:area}
Let $f$ be a $\rho$-equivariant minimal immersion which is neither holomorphic nor anti-holomorphic, with induced
metric $\gamma$, cubic holomorphic differential $\caQ$, $d_1$ anti-complex points and $d_2$ complex points. Then
\begin{equation}\label{eq:Qbounds}
(4(g-1) -d_1-d_2)\pi \geq \Area_\gamma(\Sigma) +\int_\Sigma\|\caQ\|^2_\gamma v_\gamma,
\end{equation}
with equality if and only if either $\caQ=0$ or when $f$ is Lagrangian.
\end{prop}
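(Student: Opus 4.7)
The plan is to integrate the pointwise identity \eqref{eq:curvatures} over $\Sigma$ and then compare the mixed term $\|\caQ/\gamma_1\gamma_2\|_\gamma\|\caQ\|_\gamma$ to $\|\caQ\|_\gamma^2$ via a simple pointwise inequality.

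First I would evaluate the integral of the left-hand side of \eqref{eq:curvatures} by Gauss--Bonnet applied to both $\kappa_\gamma$ and $\kappa^\perp$. Since $(\Sigma,\gamma)$ is a closed oriented Riemannian surface and the normal bundle $T\Sigma^\perp$ is an oriented rank-two real bundle, one has
\[
\int_\Sigma \kappa_\gamma\, v_\gamma = 2\pi\chi(\Sigma), \qquad \int_\Sigma \kappa^\perp v_\gamma = 2\pi\chi(T\Sigma^\perp).
\]
Combining these with the topological relation \eqref{eq:chi} yields
\[
\int_\Sigma(\kappa^\perp-\kappa_\gamma)\,v_\gamma = 2\pi(\chi(T\Sigma^\perp)-\chi(\Sigma)) = 2\pi\bigl(4(g-1)-d_1-d_2\bigr).
\]
Integrating the right-hand side of \eqref{eq:curvatures} and dividing by two then gives the exact identity
\[
\pi\bigl(4(g-1)-d_1-d_2\bigr) = \Area_\gamma(\Sigma) + \int_\Sigma \|\caQ/\gamma_1\gamma_2\|_\gamma\,\|\caQ\|_\gamma\, v_\gamma.
\]

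Next I would produce the inequality \eqref{eq:Qbounds} from this identity by showing that $\|\caQ/\gamma_1\gamma_2\|_\gamma \geq \|\caQ\|_\gamma$ pointwise. Using the explicit local expressions stated just before \eqref{eq:curvatures}, the ratio computes to
\[
\frac{\|\caQ/\gamma_1\gamma_2\|_\gamma}{\|\caQ\|_\gamma} = \frac{(u_1^2+u_2^2)^2}{4u_1^2u_2^2},
\]
at any point where $Q\neq 0$, and by AM--GM this is at least $1$. Hence $\|\caQ/\gamma_1\gamma_2\|_\gamma\|\caQ\|_\gamma \geq \|\caQ\|_\gamma^2$ everywhere on $\Sigma$, which when integrated gives the desired bound.

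Finally, for the equality case I would analyse when $(u_1^2+u_2^2)^2 = 4u_1^2u_2^2$, namely $u_1^2 = u_2^2$. From \eqref{eq:gamma} this is exactly the condition $f^*\omega=0$, i.e.\ that $f$ be Lagrangian. Since the pointwise inequality becomes an equality wherever $Q=0$, the integrated inequality is saturated precisely when, at every point where $\caQ\neq 0$, one has $u_1=u_2$. Because $\caQ$ is holomorphic, either it vanishes identically or its zero set is discrete, and in the latter case continuity forces $u_1=u_2$ on all of $\Sigma$. Hence equality holds exactly when $\caQ\equiv 0$ or $f$ is Lagrangian, which is the claim. The only step that required any real work was the curvature identity \eqref{eq:curvatures} itself, already established above, so this proposition really is a short deduction; no further obstacle is anticipated.
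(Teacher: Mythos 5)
Your proof is correct and follows essentially the same route as the paper's: integrate the curvature identity \eqref{eq:curvatures} using Gauss--Bonnet on $T\Sigma$ and $T\Sigma^\perp$ together with \eqref{eq:chi}, then apply the pointwise AM--GM comparison $\|\caQ/\gamma_1\gamma_2\|_\gamma\geq\|\caQ\|_\gamma$, with equality analysed via $u_1=u_2$ (the Lagrangian condition) on the complement of the discrete zero set of $\caQ$. You have merely made explicit the details the paper leaves implicit.
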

Note that the stability inequalities \eqref{eq:stab} confirm that the left hand side is positive. 
The last statement follows because if $\caQ\neq 0$ equality requires $\|\caQ/\gamma_1\gamma_2\|_\gamma = \|\caQ\|_\gamma$,
which in turn requires $u_1=u_2$, whence $\cos(\theta)=0$.
\begin{rem}
The local equations \eqref{eq:Toda1}, \eqref{eq:Toda2} are clearly invariant under any unimodular scaling
$Q\mapsto e^{i\alpha}Q$.  Globally this corresponds to the symmetry $\caQ\mapsto e^{i\alpha}\caQ$, and by Theorem
\ref{thm:xi} this corresponds in turn to $\xi\mapsto e^{-i\alpha}\xi$. In fact this is equivalent to the well-known symmetry
of the equations \eqref{eq:projflat} $\Phi\mapsto e^{i\psi}\Phi$ (taking $\alpha=2\psi$) which Hitchin showed is a 
Hamiltonian circle action on the 
moduli space of $SL(2,\C)$-Higgs bundles \cite{Hit87}. To see this equivalence it is enough to perform the following gauge
transformation for $\partial_E+e^{i\psi}\Phi$ using the local gauge \eqref{eq:nabla}:
\begin{eqnarray}
& & \Ad \begin{pmatrix}  e^{-2i\psi} 0&0\\ 0&1&0\\ 0&0&e^{-i\psi}\end{pmatrix}\cdot
\left(\frac{\partial}{\partial z} +
\begin{pmatrix} Z\log u_1 & 0 & e^{i\psi}u_1\\ Q/u_1u_2 &  -Z\log u_2 & 0\\ 0 & e^{i\psi}u_2 & 0
\end{pmatrix} \right) \notag \\
&= &
\frac{\partial}{\partial z} +
\begin{pmatrix} Z\log u_1 & 0 & u_1\\ e^{2i\psi}Q/u_1u_2 &  -Z\log u_2 & 0\\ 0 & u_2 & 0
\end{pmatrix} 
\end{eqnarray}
Note in particular that, unlike the $SL(2,\C)$ case, the map $(E,\Phi)\mapsto (E,-\Phi)$ fixes every $PU(2,1)$-Higgs bundle since
$\partial_E+\Phi$ and $\partial_E-\Phi$ are gauge equivalent (by the symmetric space involution). 
\end{rem}

\section{Minimal Lagrangian immersions.}\label{sec:minlag}

By Wolfson's theorem \cite[Thm 2.1]{Wol89} Theorem \ref{thm:minimal} yields minimal Lagrangian immersions when both divisors
$D_1,D_1$ are zero.  Therefore Theorem \ref{thm:minimal} implies the following characterisation of all equivariant 
minimal Lagrangian immersions.  
\begin{cor}\label{cor:minLag}
Given a closed oriented surface $\Sigma$ of hyperbolic type, minimal Lagrangian immersions $f:\caD\to\CH^2$ which are
equivariant with respect to an irreducible representation of $\pi_1\Sigma$ into $PU(2,1)$ are in one-to-one correspondence with
the Higgs data $(\Sigma_c,\xi)$, where $c$ is a point in Teichm\"{u}ller space and $\xi\in H^{0,1}(\Sigma_c,K^{-2})$.
\end{cor}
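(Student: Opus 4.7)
My plan is to deduce the corollary as a direct specialisation of Theorem \ref{thm:minimal} to the case $D_1 = D_2 = 0$, combined with Wolfson's theorem to identify this case with the Lagrangian condition. In the forward direction, starting from an irreducible $\rho$ and a $\rho$-equivariant minimal Lagrangian immersion $f$, I would first argue that $f$ has no complex or anti-complex points, and in particular is neither holomorphic nor anti-holomorphic. The reason is pointwise: at a complex point the tangent plane of $f$ is a complex line of $T\CH^2$, which cannot be Lagrangian because $\omega$ restricts non-trivially to any complex subspace; the anti-complex case is identical. Hence Theorem \ref{thm:minimal} applies and produces data $(\Sigma_c, D_1, D_2, \xi)$ with $D_1 = D_2 = 0$, and the extension class lives in $H^1(\Sigma_c, K^{-2}) \simeq H^{0,1}(\Sigma_c, K^{-2})$ via the Dolbeault isomorphism.

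For the reverse direction, given $(\Sigma_c, \xi)$ with $\xi \in H^{0,1}(\Sigma_c, K^{-2})$, I would set $D_1 = D_2 = 0$ and observe that the stability conditions \eqref{eq:stab} reduce to $0 < 6(g-1)$, which is automatic when $g \geq 2$. Theorem \ref{thm:minimal} then returns, uniquely up to $G$-equivalence, an irreducible $\rho$ and a $\rho$-equivariant minimal immersion $f$ whose divisors of complex and anti-complex points are both empty. I would then invoke the $\rho$-equivariant form of Wolfson's theorem \cite[Thm 2.1]{Wol89}, already noted in the discussion following \eqref{eq:chi}, to conclude that $f$ must be Lagrangian. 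Bijectivity of the correspondence is inherited directly from the bijectivity asserted at the end of Section 2 for the full assignment $(\Sigma_c, D_1, D_2, \xi) \mapsto (\rho, f)$.

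The main obstacle, such as it is, is the pointwise observation that ``Lagrangian'' really does correspond exactly to the vanishing of both divisors rather than to some weaker condition; but this follows immediately from the incompatibility of complex and Lagrangian tangent planes, so no analytic difficulty arises. The only other point to verify is that the converse construction does not inadvertently produce a holomorphic or anti-holomorphic immersion, but this is ruled out because Theorem \ref{thm:minimal} is stated precisely for the non-($\pm$)-holomorphic regime, so the resulting $f$ is automatically of the correct type to which Wolfson's criterion applies.
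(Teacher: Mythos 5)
Your proposal is correct and follows essentially the same route as the paper: specialise Theorem \ref{thm:minimal} to $D_1=D_2=0$ (where stability is automatic) and invoke the equivariant form of Wolfson's theorem to identify the absence of complex and anti-complex points with the Lagrangian condition. Your added pointwise observation for the forward direction (a Lagrangian tangent plane cannot be a complex line) is the easy half of the equivalence that the paper leaves implicit, so nothing of substance differs.
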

It is not strictly necessary to say that $\rho$ is irreducible in this statement: this is implied by a combination of
Corlette's result that twisted harmonic maps correspond to reductive representations \cite{Cor} and Remark \ref{rem:hol}.

For a Lagrangian immersion the \Kah angle satisfies $\sin\theta=1$, so that \eqref{eq:sin} implies $\kappa^\perp=-\kappa_\gamma$.
Moreover, $\|\caQ\|^2_\gamma = 2\|A_f\|^2$, where $A_f$ is the shape operator of $f$ \cite[Lemma 2.8]{LofMsurv} and 
\[
\|A_f\|^2 = \sup\{\tfrac12 (\tr_\gamma A_f(\nu)^2):\nu\in T_p\caD^\perp,\ |\nu|=1\}.
\]
Since $\CH^2$ has constant Lagrangian sectional curvature $-1$, \eqref{eq:curvatures} reduces to the Gauss 
(and Ricci) equation for minimal Lagrangian immersions:
\begin{equation}\label{eq:LagGauss}
-1 = \kappa_\gamma + 2\|A_f\|^2.
\end{equation}
In \cite{LofM13} we wrote this as an equation for the conformal factor $\gamma=e^u\mu$ of the induced metric with
respect to the hyperbolic metric $\mu$:
\begin{equation}\label{eq:minlag}
\Delta_\mu u-2\|\caQ\|_\mu^2e^{-2u} - 2e^u +2=0.
\end{equation}
We gave existence results for this in terms of pairs $(\Sigma_c,\caQ)$,
and showed that there is a constant $k$, independent of $c\in\caT_g$, for which 
$\|\caQ\|_\mu\leq k$ yields a minimal $\rho$-equivariant embedding for which the normal bundle exponential map
$\exp^\perp:T\caD^\perp\to\CH^2$ is a diffeomorphism. We then showed that $\rho$ is quasi-Fuchsian, since the image
under $\exp^\perp$ of a fundamental domain for $\pi_1\Sigma$ gives a globally finite fundamental domain for $\rho$. 
Taking $\caQ=0$ gives the $\R$-Fuchsian representations, i.e., those which factor through the canonical inclusion 
$SO(2,1)\to PU(2,1)$. It is therefore convenient to adopt here the following terminology (similar 
terminology is used in the study of minimal surfaces in $\RH^3$, where it was introduced in \cite{KraS}).
\begin{defn}
A representation $\rho\in\Hom(\pi_1,PU(2,1))$ will be called \emph{almost $\R$-Fuchsian} if it admits a $\rho$-equivariant
minimal Lagrangian embedding $f:\caD\to\CH^2$ whose normal bundle exponential map $\exp^\perp$ is a diffeomorphism.
We will call $f$ an \emph{almost $\R$-Fuchsian embedding}.
\end{defn}
A necessary and sufficient condition for $\exp^\perp$ to be an immersion is that
$\|\caQ\|^2_\gamma\leq 2$ \cite[Thm 7.1]{LofM13}. The following theorem improves
substantially on the existence results in \cite{LofM13} by showing that almost $\R$-Fuchsian immersions exist, 
and are the unique minimal immersion, 
up to this optimal bound on $\caQ$: this is the direct analogue of Uhlenbeck's result \cite[Thm 3.3]{Uhl} for 
almost Fuchsian minimal surfaces in $\RH^3$.
\begin{thm}\label{thm:unique}
Let $f$ be a $\rho$-equivariant minimal Lagrangian immersion for which $\|\caQ\|^2_\gamma < 2$. Then $f$ is an almost
$\R$-Fuchsian embedding (and $\rho$ is almost $\R$-Fuchsian). Moreover, $f$ is the unique $\rho$-equivariant minimal immersion.
\end{thm}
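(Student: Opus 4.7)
The proof splits into two parts. For the almost $\R$-Fuchsian assertion, the authors' earlier Theorem 7.1 of \cite{LofM13} already implies that under the sharp bound $\|\caQ\|^2_\gamma\leq 2$ the normal exponential map $\exp^\perp:T\caD^\perp\to\CH^2$ is an immersion. Under the strict inequality here it is therefore a local diffeomorphism everywhere. To upgrade this to a global diffeomorphism I would show that the pulled-back metric on $T\caD^\perp$ is complete by controlling Jacobi fields along normal geodesics using $\|A_f\|^2=\tfrac12\|\caQ\|^2_\gamma<1$ together with the sectional curvature pinching $-4\leq\kappa_{\CH^2}\leq -1$. Then $\exp^\perp$ is a Riemannian covering, and a diffeomorphism because $\CH^2$ is simply connected. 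The $\pi_1\Sigma$-action descends to $T\caD^\perp$ with compact quotient, so $\rho$ is almost $\R$-Fuchsian.

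For uniqueness, let $g:\caD\to\CH^2$ be any other $\rho$-equivariant minimal immersion. By Corlette's theorem $g$ is the unique $\rho$-equivariant harmonic map for the conformal structure it induces on $\Sigma$, so it suffices to show the images $f(\caD)$ and $g(\caD)$ coincide. The strategy, modelled on Uhlenbeck's \cite[Thm 3.3]{Uhl} argument in $\RH^3$, is a maximum principle applied to the $\pi_1\Sigma$-invariant function
\[
F(\tilde z) = \tfrac12\,\mathrm{dist}_{\CH^2}(g(\tilde z),f(\caD))^2,
\]
which descends to a smooth function on the closed surface $\Sigma$: it is smooth because $\exp^\perp$ is a diffeomorphism, and $\pi_1\Sigma$-invariant because $f(\caD)$ is $\rho$-invariant and $\rho$ acts by isometries. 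Wherever $F>0$ the minimality of $g$ together with the Hessian comparison for the distance function to $f(\caD)$ should yield $\Delta F\geq 0$, where the Laplacian is taken with respect to the metric induced by $g$. The key input is that when $\|A_f\|^2<1$ the principal curvatures of the equidistant tubes at signed distance $r>0$ have a uniformly correct sign, making $\mathrm{tr}(\nabla^2 F)\geq 0$ along $g$. The maximum principle on the compact surface $\Sigma$ then forces $F\equiv 0$, so $g(\caD)\subset f(\caD)$; a reparametrisation argument using that $f$ is an embedding then yields $g=f$.

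The main technical obstacle is the Hessian comparison in $\CH^2$. Unlike $\RH^3$, the normal bundle of a Lagrangian surface in $\CH^2$ has rank two and is identified with the tangent bundle via the complex structure $J$, so Jacobi fields in both normal directions must be controlled, and the holomorphic sectional curvature $-4$ must be carefully separated from the Lagrangian sectional curvature $-1$. The sharp bound $\|\caQ\|^2_\gamma<2$ is precisely what makes the principal curvatures of the equidistant tubes strictly less than $1$ in absolute value, exactly mirroring the condition $|\lambda|<1$ on principal curvatures in the classical almost Fuchsian theorem.
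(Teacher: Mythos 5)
Your first part follows essentially the paper's route: Lemma \ref{lem:qF} also proves that the pullback metric on $T\caD^\perp\simeq\caD\times\Delta$ is complete, so that $\exp^\perp$ is proper and hence a diffeomorphism; the paper gets completeness from an explicit computation of $\Theta^*g$ in the Toda frame (the key positivity being $l^2-|k|^2>0$ for $Q_0<\tfrac12$) rather than from Jacobi field estimates, but the logic is the same and your sketch is sound.

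The uniqueness part contains a genuine gap. The claimed key input --- that for $\|A_f\|^2<1$ the equidistant tubes at distance $r>0$ have principal curvatures of a uniformly correct sign, so that $\Delta F\geq 0$ wherever $F>0$ --- is false, already in the $\RH^3$ model you cite. A minimal surface in $\RH^3$ with principal curvatures $\pm\lambda$, $0<\lambda<1$, has parallel surfaces at distance $r$ with principal curvatures $(\pm\lambda+\tanh r)/(1\pm\lambda\tanh r)$; for $\tanh r<\lambda$ one of these is negative, so the level sets of the distance function to $f(\caD)$ are not convex and the Hessian of $\tfrac12\,\mathrm{dist}(\cdot,f(\caD))^2$ is not positive semidefinite on an arbitrary tangent $2$-plane. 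What is true, and is all that Uhlenbeck (and this paper) uses, is that the \emph{mean} curvature of the $2$-dimensional equidistant surfaces $\varphi_r=\exp^\perp(r\eta)$ has the correct sign --- a trace condition, equivalent via the first variation of area to $d\vol_r/dr>0$ --- and this only yields a contradiction at a point where the competing surface is tangent to such an equidistant surface, i.e.\ at the maximum of the distance function. That is exactly how Proposition \ref{prop:unique} (proved in Appendix \ref{sec:unique}) argues: write the competitor locally as $\exp^\perp(\nu)$, observe that at a local maximum of $u=\|\nu\|$ its tangent plane coincides with that of $\varphi_{u(x)}=\exp^\perp(u(x)\nu/u)$, and combine the composition formula for tension fields with $d\rho(H_r)=-\vol_r^{-1}\,d\vol_r/dr<0$ to force the competitor's mean curvature to be nonzero there. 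The monotonicity $d\vol_r/dr>0$ is then verified by an explicit computation of the induced area form $a(r)$ of $\varphi_r$ using $Q_0<\tfrac12$; this is the statement your hypothesis $\|\caQ\|^2_\gamma<2$ actually delivers. Your argument can be repaired by localising the comparison at the maximum point and replacing ``principal curvatures of the tubes'' by ``mean curvature of the $2$-dimensional equidistant surfaces'', but as written the step ``$\Delta F\geq 0$ wherever $F>0$'' does not hold.
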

\begin{rem}
This theorem should also be compared with the parametrisation of hyperbolic affine sphere immersions 
$\caD\to\R^3$ equivariant with respect to an irreducible representation into $PSL(3,\R)$, or equally, 
to the parametrisation of all convex real projective structures on $\Sigma$ \cite{Wan,Lof01,Lab07}. 
The data is a pair $(\Sigma_c,\caQ)$ where $\caQ$ is a cubic holomorphic differential. By a theorem of
Choi \& Goldman \cite{ChoG} this data parametrises
an entire component, the Hitchin component, of the representation space $\caR(SL(3,\R))$.
Labourie \cite{Lab07} directly related the hyperbolic affine sphere data to the Higgs bundles identified by 
Hitchin in \cite{Hit92}. In that case the Hitchin component is parametrized by Higgs bundles over $\Sigma_c$ with bundle
$K^{-1} \oplus 1 \oplus K$ and Higgs field 
\[ 
\begin{pmatrix} 0&1&0 \\ 0&0&1 \\ \mathcal Q & 0&0 \end{pmatrix}.
\]
Theorem \ref{thm:unique} shows that Corollary \ref{cor:minLag} provides a similar parametrisation of the almost Fuchsian locus 
inside the $\tau=0$ component $\caR(PU(2,1))$.  It is unlikely, however, to parametrise the entire component.  In the
analogous case of equivariant minimal surfaces in $\RH^3$, there are examples of quasi-Fuchisan hyperbolic 3-manifolds which
admit more than one minimal surface \cite{And,HuaW}.
\end{rem}
We will break the proof of Theorem \ref{thm:unique} into two parts, starting with the proof that $f$ is an embedding. 
For this we need to
recall from \cite[\S 7]{LofM13} an explicit expression for $\exp^\perp:T\caD^\perp\to\CH^2$ when $f$ is minimal Lagrangian. In this
case the frame $f_1,f_2,f_0$ used in the proof of Theorem \ref{thm:xi} has the simple form
\[
f_1 = \frac{1}{s}(f_0)_z,\quad f_2 = \frac{1}{s}(f_0)_{\bar z},\quad s = u_1=u_2.
\]
Let $S_-\subset \C^{2,1}_-$ be the pseudo-sphere into which $f_0$ maps, and $\pi:S_-\to\CH^2$ be the projection. 
When $f$ is Lagrangian $T\caD^\perp = Jf_*T\caD$ is a Lagrangian $2$-plane in $T\CH^2$. 
This implies that $\exp^\perp(T_z\caD^\perp)$ is a totally geodesic Lagrangian disc normal to $f(\caD)$ at $f(z)$. 
This Lagrangian disc has the form
\[
\{\pi[(ia-b)f_1(z) +(ia+b)f_2(z) + f_0(z)]:a^2+b^2< \tfrac12\}.
\]
Let $\Delta\subset\C$ denote the open disc of radius $1/2$.
Then we have an identification between $T\caD^\perp$ and $\caD\times\Delta$ for which $\exp^\perp$ is represented by
the map
\[
\Theta :\caD\times\Delta\to\CH^2;\quad \Theta(z,w) = \pi(-\bar w f_1(z) + wf_2(z) +f_0(z)).
\]
The following result improves \cite[Thm 8.1]{LofM13}.
\begin{lem}\label{lem:qF}
When $\|\caQ\|^2_\gamma< 2$ the pullback metric $\Theta^*g$ is complete, and therefore $\Theta$ is a proper map and 
hence a diffeomorphism. In that case $f$ is an embedding and $\rho$ is almost $\R$-Fuchsian.
\end{lem}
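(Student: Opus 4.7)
The plan is to reduce the lemma to showing that $\Theta^*g$ is complete; the remaining assertions then follow by standard arguments. By the earlier result [LofM13, Thm 7.1], the strict inequality $\|\caQ\|_\gamma^2<2$ already guarantees that $\Theta$ is an immersion, so $\Theta^*g$ is a genuine Riemannian metric and $\Theta:(\caD\times\Delta,\Theta^*g)\to(\CH^2,g)$ is a local isometry. A local isometry with complete domain into a connected target is automatically a covering map (standard path-lifting argument), and since $\CH^2$ is simply connected this covering is trivial, so $\Theta$ is a diffeomorphism. Properness then follows from Hopf--Rinow applied to $\Theta^*g$: the preimage of any compact set is closed and bounded, hence compact. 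Since $f=\Theta(\cdot,0)$, this forces $f$ to be an embedding, and $\rho$-equivariance of the entire construction produces a finite fundamental domain for $\rho$ by applying $\exp^\perp$ to a fundamental domain for $\pi_1\Sigma$ in $\caD$, which is exactly the definition of almost $\R$-Fuchsian.

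For completeness of $\Theta^*g$ the strategy is to compare it from below with a product-type complete reference metric. Working in a Toda frame with $u_1=u_2=s$, write $\phi(z,w)=-\bar w f_1(z)+wf_2(z)+f_0(z)$, so that $\langle\phi,\phi\rangle=2|w|^2-1$ and $\Theta=\pi\circ\phi$. Differentiating $\phi$ using the Lagrangian specialisations of \eqref{eq:frame1}--\eqref{eq:frame0} and then expressing $g$ on $\CH^2$ via the standard Hermitian formula in terms of the indefinite form on $\C^{2,1}$, one finds that the diagonal blocks of $\Theta^*g$ scale like $(1-2|w|^2)^{-1}\gamma$ on the $\caD$-direction and like $(1-2|w|^2)^{-2}|dw|^2$ on the $\Delta$-direction, with cross terms governed by $|Q|/s^3=\|\caQ\|_\gamma$. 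The natural reference metric is $\tilde g=\gamma+|dw|^2/(1-2|w|^2)^2$, which is complete: $\gamma$ descends to the compact quotient $\Sigma$, so is complete on $\caD$; and the Poincar\'e-type factor is complete as $|w|^2\to\tfrac12$ because $\phi$ approaches the null cone and $\Theta(z,w)\to\partial\CH^2$.

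The main obstacle is making this comparison rigorous and uniform on all of $\caD\times\Delta$. The strict bound $\|\caQ\|_\gamma^2<2$ is exactly what is needed for the cross terms to be absorbed into the diagonal blocks so that $\Theta^*g\geq c\,\tilde g$ for some $c>0$ pointwise; indeed this is the same inequality that enforces positive definiteness in [LofM13, Thm 7.1], and it degenerates precisely at $\|\caQ\|_\gamma^2=2$. Once this pointwise comparison is established, completeness of $\Theta^*g$ is immediate from completeness of $\tilde g$, and the remaining assertions of the lemma follow as in the first paragraph.
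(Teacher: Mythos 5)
Your overall architecture is the same as the paper's: bound $\Theta^*g$ below by a complete product-type metric on $\caD\times\Delta$ and then deduce properness/diffeomorphism by the standard covering-map or Hopf--Rinow argument (that last reduction is fine and matches what the paper does). The problem is that the entire analytic content of the lemma is the uniform pointwise comparison $\Theta^*g\geq c\,\tilde g$, and you assert it rather than prove it. Your stated justification --- that the bound $\|\caQ\|^2_\gamma<2$ is ``the same inequality that enforces positive definiteness in [LofM13, Thm 7.1]'' --- does not close the gap: Theorem 7.1 of that paper gives pointwise nondegeneracy of $d\Theta$ (an immersion) even at $\|\caQ\|^2_\gamma=2$, whereas completeness requires a lower bound with a constant $c>0$ uniform as $|w|^2\to\tfrac12$, and the $w$-disc is not compact. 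Indeed, at the threshold $\|\caQ\|^2_\gamma=2$ the map is still an immersion but the relevant quantity degenerates at the boundary of $\Delta$, which is exactly why nondegeneracy and uniformity are different statements and why the strict inequality matters.

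Your description of where the difficulty sits is also not accurate. In the actual computation (normalising the frame at a point $p$ so that $f_1,f_2,f_0$ are standard basis vectors, $s(0)=1/\sqrt2$, and $Q_0=Q(p)\in[0,\tfrac12)$ real), the pullback splits \emph{exactly} as $\Theta^*g=\theta_1+\theta_2$ with no cross terms between the $\caD$- and $\Delta$-directions: $\theta_2$ is precisely the Klein metric on $\Delta$, and all the $\caQ$-dependence sits inside the $\caD$-block $\theta_1$ through $\phi=l\,dz+k\,d\bar z$ with $k=-2Q_0w-\tfrac1{\sqrt2}\bar w^2$. So there are no cross terms to ``absorb''; the issue is that $\theta_1$ could degenerate as $|k|\to l$. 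The paper handles this by an eigenvalue computation reducing everything to the explicit identity
\[
l^2-|k|^2=r^2(1-4Q_0^2)+\tfrac12-2\sqrt2\,Q_0r^3\cos(3\alpha),\qquad w=re^{i\alpha},
\]
which is bounded below by a positive constant uniformly in $w$ precisely because $Q_0<\tfrac12$ and $\sqrt2 r<1$. Without this (or an equivalent) computation your proof does not go through; with it, your argument becomes essentially the paper's.
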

\begin{proof} Fix a point $p\in\caD$. We can normalise the frame $f_1,f_2,f_0$ so that these are the standard basis vectors
$e_1,e_2,e_3$ at $p$, and choose a conformal normal coordinate $z$ centred at $p$ so that $\gamma(p)=|dz|^2$ (i.e., 
$s(0)=1/\sqrt{2}$) with $s_z=0=s_{\bar z}$ at this point. We may also rotate $z$ so that $Q_0=Q(p)$ is real and
non-negative. Since $\caQ=Q_0dz^3$ and $\|dz\|=\sqrt{2}$ we have $0\leq Q_0<\tfrac12$. With such choices, in 
\cite[\S 7]{LofM13}, we computed the differential of $\Theta$ (in affine coordinates) at $p$ to be given by
\[
\begin{pmatrix} d\Theta_1\\ d\bar\Theta_1\\ d\Theta_2\\ d\bar\Theta_2\end{pmatrix}
=
\begin{pmatrix} l & k & 0 &-1\\ \bar k & l & -1 & 0 \\ \bar k & l & 1 & 0 \\ l & k & 0 & 1\end{pmatrix}
\begin{pmatrix} dz \\ d\bar z \\ dw \\ d\bar w \end{pmatrix},
\]
where
\[
l = \tfrac{1}{\sqrt{2}}(1+|w|^2),\quad k = -2Q_0w-\tfrac{1}{\sqrt{2}}\bar w^2.
\]
Now set $\phi = ldz+kd\bar z$ and notice that in affine coordinates $\Theta(p)=(-\bar w,w)$. Then at $p$ we compute the 
pull-back of $g$ to be
\begin{align}
(\Theta^*g)_p= &  \sum_{i,j=1}^2 \frac{1}{1-\|\Theta\|^2}\left(\delta_{ij} + 
\frac{\bar\Theta_i\Theta_j}{1-\|\Theta\|^2}\right)d\Theta_id\bar\Theta_j\notag \\
= &  \frac{1}{1-2|w|^2}\left[ \left(1+\frac{(-w)(-\bar w)}{1-2|w|^2}\right)(\phi-d\bar w)(\bar\phi - dw)\right. \notag \\
& + \left(0+\frac{(-w) w}{1-2|w|^2}\right)(\phi-d\bar w)(\bar\phi + d\bar w)\notag \\
& + \left(0+\frac{\bar w(-\bar w)}{1-2|w|^2}\right)(\bar\phi+dw)(\bar\phi - dw)\notag \\
&  +\left.\left(1+\frac{\bar ww)}{1-2|w|^2}\right)(\bar\phi+dw)(\phi + d\bar w)\right]\notag \\
=& \frac{1}{(1-2|w|^2)^2}\left([2|\phi|^2 -(w\phi+\bar w\bar\phi)^2] +[2|dw|^2 + (wd\bar w-\bar w dw)^2]\right). 
\label{eq:Theta*g}
\end{align}
Now consider the two terms in this expression:
\[
\theta_1 = \frac{1}{(1-2|w|^2)^2}\left(2|\phi|^2 -(w\phi+\bar w\bar\phi)^2\right),\quad
\theta_2 = \frac{1}{(1-2|w|^2)^2}\left(2|dw|^2+ (wd\bar w-\bar w dw)^2\right).
\]
The term $\theta_2$, which is the induced metric on $\Delta$, is just the Klein model for the hyperbolic plane and reflects
the fact that the fibres of $\exp^\perp$ are totally geodesic. The first term $\theta_1$ is the expression at $p$ for the
metric induced by the immersion $\varphi_w:\caD\to\CH^2$, $\varphi_w(z) = \Theta(z,w)$. We can think of each vector 
$(-\bar w,w)$ as determining a section of $T\caD^\perp$, and $\varphi_w$ is the image of this under $\exp^\perp$. We claim
that there is a constant $\varepsilon_1>0$ for which, for every $w$, 
\[
\varphi_w^*g(X,X)\geq \varepsilon_1\gamma(X,X),\quad\forall X\in T_p\caD.
\]
It follows that $\Theta^*g=\theta_1+\theta_2$ is bounded below by $\varepsilon_2\gamma + \theta_2$. The latter is a product
of complete metrics on $\caD\times\Delta$ and therefore $\Theta^*g$ is also complete. 

To prove the claim, write $w=w_1+iw_2$ and
$\phi=\phi_1+i\phi_2$ for real and imaginary parts, and set $r^2=|w|^2<1/2$. Then
\begin{equation}\label{eq:theta1}
\theta_1 = \frac{1}{(1-2r^2)^2}\begin{pmatrix} \phi_1 & \phi_2\end{pmatrix}\begin{pmatrix} 2-4w_1^2 & 4w_1w_2\\
4w_1w_2 & 2-4w_2^2\end{pmatrix} \begin{pmatrix} \phi_1 \\ \phi_2\end{pmatrix}.
\end{equation}
The eigenvalues of the matrix are $2$ and $2-4r^2$. Therefore, using the smaller eigenvalue $2-4r^2$,
\[
\theta_1 \geq \frac{2}{1-2r^2}|\phi|^2\geq 2|\phi|^2.
\]
It now suffices to show that 
\[
|\phi|^2\geq\varepsilon_2\gamma = \varepsilon_2(dx^2+dy^2),
\]
for a constant $\varepsilon_2>0$ independent of $w$, where $z=x+iy$. For this, write $k=k_1+ik_2$ so that
\begin{equation}\label{eq:B}
\begin{pmatrix} \phi_1\\ \phi_2 \end{pmatrix} = B \begin{pmatrix} dx \\ dy\end{pmatrix}\text{ for }
B=\begin{pmatrix} l+k_1& k_2\\k_2 & l-k_1\end{pmatrix}.
\end{equation}
The components of the metric $|\phi|^2$ are the entries of $B^tB=B^2$, and the eigenvalues are the roots of
\[
\lambda^2 - 2(l^2+|k|^2)\lambda +(l^2-|k|^2)^2=0,
\]
and are therefore $(l\pm|k|)^2$. Thus for any unit vector $X\in T_p\caD$,
\[
|\phi|^2(X,X)\geq (l-|k|)^2.
\]
Now
\[
l-|k| = \frac{l^2-|k|^2}{l+|k|},
\]
and $l+|k|$ is clearly bounded above, so it suffices to show that $l^2-|k|^2$ is bounded below by a positive constant 
independent of $w$. We compute
\begin{align}
l^2-|k|^2 & = \tfrac12(1+r^2)^2 - (2Q_0w+\tfrac1{\sqrt{2}}\bar w^2)(2Q_0\bar w+\tfrac1{\sqrt{2}}w^2) \notag \\
& = r^2(1 -4Q_0^2) +\tfrac12 - Q_0(\sqrt{2}r)^3\cos(3\alpha) \label{eq:l2-k2},
\end{align}
where $w=re^{i\alpha}$.
Now $Q_0<\tfrac12$ and $\sqrt{2}r<1$, so $1-4Q_0^2>0$ and $\tfrac12-2\sqrt{2}Q_0r^3\cos(3\alpha)>0$. With $\caQ$ fixed we get a
uniform positive lower bound over $r,\alpha$. Thus the metric $\Theta^*g$ is complete on $\caD\times\Delta$. We conclude, as
in \cite{LofM13}, that $\Theta$ is a proper map and a diffeomorphism, whence $f$ is an embedding and $\rho$ is almost
$\R$-Fuchsian.
\end{proof}
To prove that $f$ is unique we first need a result which can be given in greater generality than our current situation. 
Let $(N,g)$ be a complete Riemannian manifold. 
\begin{prop}\label{prop:unique}
Let $f:M\to (N,g)$ be a compact embedded minimal submanifold for which 
$\exp^\perp:TM^\perp\to N$ is a diffeomorphism. For a local section $\eta$ of
$TM^\perp$ of unit length and a positive constant $r$, set $\varphi_r=\exp^\perp(r\eta)$ and
let  $\vol_r$ be the volume form for the metric $\varphi^*_rg$ on an open subset of $M$.
Suppose $d\vol_r/dr>0$ for all $r$ and for every $\eta$. Then $f$ is the unique minimal immersion of $M$
transverse to the fibres of $\exp^\perp$.  
\end{prop}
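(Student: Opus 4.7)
The plan is to use a maximum principle on the ``height'' of $f'$ above $f(M)$ measured via $\exp^\perp$. Because $\exp^\perp:TM^\perp\to N$ is a diffeomorphism, the function $u:M\to\mathbb{R}_{\ge 0}$, $u(x)=\|(\exp^\perp)^{-1}(f'(x))\|$, is smooth; and $u\equiv 0$ if and only if $f'(M)\subset f(M)$, which by transversality and dimension counting forces $f'=f$. By compactness $u$ attains its maximum $r_0$ at some $x_0$, and we aim to derive a contradiction from $r_0>0$.

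\textbf{Local reduction.} Let $\pi:N\to M$ be the retraction induced by the bundle projection $TM^\perp\to M$. Transversality makes $\pi\circ f'$ a local diffeomorphism near $x_0$, so on a neighbourhood of $y_0:=(\pi\circ f')(x_0)$ we may reparametrise and write $\tilde f'(y)=\exp^\perp(s(y))=F(y,r(y))$, where $s$ is a local section of $TM^\perp$, the function $r(y)=\|s(y)\|$ has a local maximum $r_0$ at $y_0$, $\eta(y)=s(y)/r(y)$ is a unit local section, and $F(y,t)=\exp^\perp(t\eta(y))$. In particular $\varphi_r(y)=F(y,r)$ are the parallel submanifolds from the hypothesis.

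\textbf{Key identities at $y_0$.} Since $dr(y_0)=0$, one has $d\tilde f'(y_0)(X)=dF(y_0,r_0)(X,0)=d\varphi_{r_0}(y_0)(X)$, so $\tilde f'(M)$ and $\varphi_{r_0}(M)$ share the same tangent (and normal) spaces at $F(y_0,r_0)$ and $\tilde f'^*g|_{y_0}=\varphi_{r_0}^*g|_{y_0}$. The vector $F_t=\partial F/\partial t$ is unit-length and normal to every $\varphi_t(M)$ by the Gauss lemma for the normal exponential, so the first-variation formula for a unit-speed normal deformation gives $d\vol_r/dr=-\langle H_{\varphi_r},F_t\rangle\vol_r$; the hypothesis $d\vol_r/dr>0$ therefore translates to
\[
\langle H_{\varphi_{r_0}}(y_0),F_t(y_0,r_0)\rangle<0.
\]
A second-derivative calculation, in which $dr(y_0)=0$ kills all cross-terms and allows the coordinate Hessian trace to be replaced by the invariant Laplacian, produces the mean-curvature identity
\[
H_{\tilde f'}(y_0)=H_{\varphi_{r_0}}(y_0)+(\Delta_{g'}r)(y_0)\,F_t(y_0,r_0),
\]
where $g'$ denotes the induced metric at $y_0$. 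Minimality of $f'$ forces $H_{\tilde f'}(y_0)=0$, and taking the inner product with $F_t(y_0,r_0)$ yields $(\Delta_{g'}r)(y_0)>0$, contradicting the fact that $r$ has a local maximum at $y_0$.

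\textbf{Main obstacle.} The most technical step is the derivation of the displayed mean-curvature identity, which requires tracking the Christoffel symbols of the ambient connection in both $H_{\tilde f'}$ and $H_{\varphi_{r_0}}$. The saving grace is that these symbols contribute quadratic expressions in the first derivatives of the immersions, and the latter agree at $y_0$ by the equality $d\tilde f'(y_0)=d\varphi_{r_0}(y_0)$; all cross terms involving $dr(y_0)$ vanish. Once the identity is in hand, what remains is a clean maximum principle argument.
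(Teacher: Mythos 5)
Your proposal is correct and follows essentially the same route as the paper: a maximum principle applied to the radial distance function $\rho=\|(\exp^\perp)^{-1}(\cdot)\|$ composed with the competing immersion, combined with the first-variation formula to turn the hypothesis $d\vol_r/dr>0$ into $\langle H_{\varphi_r},\grad\rho\rangle<0$, and a Hessian comparison at an interior maximum. The only cosmetic difference is that you derive the identity $H_{\tilde f'}=H_{\varphi_{r_0}}+(\Delta_{g'}r)F_t$ directly from the second fundamental forms, whereas the paper extracts just its $F_t$-component via the Eells--Lemaire composition formula for tension fields, $\tau(\rho\circ\varphi)=d\rho(\tau(\varphi))+\tr_\gamma\nabla d\rho(d\varphi,d\varphi)$; the two computations are equivalent at the critical point.
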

The proof is given in Appendix \ref{sec:unique}. 

Now we can complete the proof of Theorem \ref{thm:unique}. Since $\rho$ is quasi-Fuchsian the quotient $\CH^2/\rho$ is a
manifold, and by the previous lemma $f:\Sigma\to\CH^2/\rho$ is a minimal embedding such that 
$\exp^\perp:T\Sigma^\perp\to \CH^2/\rho$ is a diffeomorphism. 
Now if $\varphi:\caD\to\CH^2$ is any $\rho$-equivariant immersion then it must be transverse to the fibres of 
$\exp^\perp$, 
since $d\varphi\circ d\delta = d\rho(\delta)\circ d\varphi$ for every $\delta\in\pi_1\Sigma$ and the action of $\rho$ is 
transverse to the fibres. Therefore the uniqueness claim in Theorem \ref{thm:unique} follows from the previous
proposition and the following lemma.
\begin{lem}
Under the assumptions of Theorem \ref{thm:unique}, if $v_r$ is the area form for the metric $\varphi_r^*g$ induced 
by any local immersion of the form $\varphi_r=\exp^\perp(r\eta)$ for a local section $\eta$ of
$TM^\perp$ of unit length and a positive constant $r$, then $dv_r/dr>0$.
\end{lem}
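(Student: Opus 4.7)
The plan is to reduce the statement to an explicit pointwise calculation at a point $p\in\caD$, using the concrete model for $\exp^\perp$ provided by Lemma \ref{lem:qF}. By $PU(2,1)$-invariance I may normalise at $p$ exactly as in the proof of that lemma: choose a conformal normal coordinate $z$ centred at $p$ with $s(0)=1/\sqrt 2$, a unitary adapted frame $f_1,f_2,f_0$ equal to the standard basis of $\C^{2,1}$ at $p$, and a rotation of $z$ so that $\caQ(p)=Q_0\,dz^3$ is real and non-negative. The hypothesis $\|\caQ\|^2_\gamma<2$ then reads $Q_0<\tfrac12$. A unit section $\eta$ determines a direction $e^{i\alpha}$ in the fibre of $T\caD^\perp$ over $p$, and because the fibres of $\exp^\perp$ are totally geodesic Lagrangian discs with the metric $\theta_2$, the geodesic arc-length parameter $r$ is a strictly increasing function of the Klein-model radial coordinate $\varrho=|w|$.

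Next I would read off $v_r(p)$ from equation \eqref{eq:theta1}. The matrix of $\theta_1$ in the $(x,y)$ coordinates is $(1-2\varrho^2)^{-2}B^TMB$, and the identities $\det M=4(1-2\varrho^2)$ and $\det B=l^2-|k|^2$ give the compact expression
\[
v_r(p) \;=\; \frac{2(l^2-|k|^2)}{(1-2\varrho^2)^{3/2}}\,dx\wedge dy.
\]
Substituting \eqref{eq:l2-k2} turns this into an explicit function $F(\varrho)$ of $\varrho$, $\alpha$ and $Q_0$. A short calculation then shows that the sign of $F'(\varrho)$ coincides with the sign of
\[
g(\varrho) \;=\; (10-16Q_0^2) + (4-16Q_0^2)\varrho^2 - 12\sqrt{2}\,Q_0\,\varrho\cos(3\alpha),
\]
so it suffices to prove $g>0$ on $(0,1/\sqrt 2)$ uniformly in $\alpha$ and in $Q_0\in[0,\tfrac12)$. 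Monotonicity of $\varrho$ in $r$ then yields $dv_r/dr>0$.

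The main obstacle is precisely this uniform positivity of $g$; only the linear term can decrease $g$, and since $Q_0\geq 0$ the worst case is $\cos(3\alpha)=1$, which reduces matters to a quadratic in $\varrho$. A short case split finishes the argument. If $Q_0<\tfrac14$ the critical point $\varrho^{*}=3\sqrt{2}\,Q_0/(2(1-4Q_0^2))$ lies inside $(0,1/\sqrt 2)$, and its value is bounded below by the elementary estimate
\[
(10-16Q_0^2)-\frac{18Q_0^2}{1-4Q_0^2} \;>\; 9-\tfrac32 \;>\; 0.
\]
If instead $\tfrac14\leq Q_0<\tfrac12$ the quadratic is monotone on the interval, so its minimum is the boundary value $g(1/\sqrt 2)=12(1-2Q_0)(1+Q_0)$, which is strictly positive precisely because $Q_0<\tfrac12$. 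This last case is exactly where the bound $\|\caQ\|^2_\gamma<2$ is sharp, matching the sharpness of the hypothesis in Theorem \ref{thm:unique}.
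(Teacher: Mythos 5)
Your proof is correct and follows essentially the same route as the paper: normalise at a point as in Lemma \ref{lem:qF}, read off $v_r=2(l^2-|k|^2)(1-2r^2)^{-3/2}dx\wedge dy$ from \eqref{eq:theta1} and \eqref{eq:l2-k2}, and differentiate (your $g(\varrho)$ is exactly four times the paper's bracket). The only difference is cosmetic: the paper groups the bracket as $(1+r^2)(1-4Q_0^2)+\tfrac32(1-2\sqrt{2}Q_0 r\cos(3\alpha))$, where both summands are manifestly positive from $Q_0<\tfrac12$ and $r<1/\sqrt{2}$, which avoids your two-case minimisation of the quadratic.
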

\begin{proof}
Since $\varphi_r(z) = \Theta(z,re^{i\alpha})$ for some fixed $r$ and $\alpha$, the
induced metric $\varphi_r^*g$ at a point $p\in\Sigma$ is given by $\theta_1$ from the proof of Lemma \ref{lem:qF}.
Using the expression \eqref{eq:theta1} we can write 
\[
\varphi_r^*g = \frac{1}{(1-2r^2)^2}\begin{pmatrix} dx & dy \end{pmatrix} B^t
\begin{pmatrix} 2-4w_1^2 & 4w_1w_2\\ 4w_1w_2 & 2-4w_2^2\end{pmatrix}
B\begin{pmatrix} dx\\ dy\end{pmatrix},
\]
where $B$ is given by \eqref{eq:B}. The determinant of the matrix in the middle is $2(2-4r^2)$, so that 
$v_r = a(r)dx\wedge dy$ where
\begin{align*}
a(r) &= \frac{1}{(1-2r^2)^2} 2\sqrt{1-2r^2}(l^2-|k|^2)\\
&= \frac{1}{(1-2r^2)^{3/2}}\left( 1+2r^2(1-4Q_0^2)-4\sqrt{2}Q_0r^3\cos(3\alpha)\right),
\end{align*}
using \eqref{eq:l2-k2}. A calculation shows that
\[
\frac{da}{dr} = \frac{4r}{(1-2r^2)^{5/2}}\left[ (1+r^2)(1-4Q_0^2) + \tfrac32 (1-2\sqrt{2}Q_0r\cos(3\alpha))\right].
\]
Now $Q_0<\tfrac12$, $r<1/\sqrt{2}$ and $\cos(3\alpha)\leq 1$, so $da/dr>0$.
\end{proof}

\subsection{Families of solutions to the Gauss equation.}\label{sec:Gauss}
Theorem \ref{thm:unique}  shows that the norm $\|\caQ\|_\gamma$ gives control over uniqueness
of minimal Lagrangian immersions, but at present we have no clear way of relating it to the parametrisation by the 
extension class $\xi$. Moreover, since this norm depends upon the solution to \eqref{eq:minlag} it is difficult to control
\emph{a priori}. On the other hand, the
combined results of \cite{LofM13} and \cite{HuaLL} show that a bound on $\|\caQ\|_\mu$ must be combined with a condition
the solution of \eqref{eq:minlag} to get existence and uniqueness. 
One knows that the zero solution $u\equiv 0$ is the unique solution for $\caQ=0$, and that for $\|\caQ\|_\mu$ small and
non-zero there are always solutions \cite{LofM13}, but these are not unique \cite{HuaLL}. 
The challenge is to 
understand how solutions behave as one moves along a ray $t\caQ_0$, $t\geq 0$, given a fixed cubic holomorphic
differential $\caQ_0$. 
To study solutions along such rays, Huang et.\ al \cite{HuaLL} introduced the following terminology.
\begin{defn}
A solution $u$ to \eqref{eq:minlag} is \emph{$\caF$-stable} if the linearised operator
\begin{equation}\label{eq:lin}
\caL = -\Delta_\mu +2e^u-4\|\caQ\|_\mu^2e^{-2u},
\end{equation}
is positive.
\end{defn}
This condition ensures, by the Implicit Function Theorem in the appropriate Sobolov spaces, that there is locally a 
smooth curve $u(t)$ of solutions to
\begin{equation}\label{eq:family}
\caH(u,t) = \Delta_\mu u-2\|t\caQ_0\|_\mu^2e^{-2u} - 2e^u +2=0,
\end{equation}
nearby any $\caF$-stable solution $u(t_0)$. Our aim here is to show that, given $\caQ_0$, the $\caF$-stable solutions form
a continuous curve terminated at one end by the zero solution and at the other end by the first
solution which is not $\caF$-stable. This, together with the results of \cite{LofM13,HuaLL}, gives the following summary
of the behaviour of solutions to \eqref{eq:minlag} as the cubic differential is scaled.
\begin{thm}\label{thm:rays}
Fix a non-zero cubic holomorphic differential $\caQ_0$ on $\Sigma_c$. Set 
\[
T_0=\sqrt{4/27}(\sup_\Sigma\|\caQ_0\|_\mu)^{-1}.
\] 
Then:
\begin{enumerate}
\item there exists a $T_2>T_0$ such that \eqref{eq:family} has no solutions for $t\geq T_2$; 
\item there exists $T_0\leq T_1< T_2$ such that for $t<T_1$ there is a unique continuous family of $\caF$-stable solutions to 
\eqref{eq:family}. All $\caF$-stable solutions lie on this family and the limiting solution $u(T_1)$ is not
$\caF$-stable;
\item for $t<T_0$ the $\caF$-stable solutions yield almost Fuchsian embeddings; 
\item for $0<t<T_1$ there is at least one solution which is not $\caF$-stable.
\end{enumerate}
\end{thm}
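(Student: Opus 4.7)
The plan is to build the continuous family of $\caF$-stable solutions by the Implicit Function Theorem starting from the trivial solution $u\equiv 0$ at $t=0$, extend it by a priori estimates up to a maximal $T_1$, and then quote \cite{LofM13} and \cite{HuaLL} for parts (i), (iii) and (iv). First I would verify the starting conditions: at $t=0$ the equation $\caH(u,0)=0$ reduces to $\Delta_\mu u = 2(e^u-1)$, so the maximum principle forces $u\equiv 0$, and at this solution the linearisation \eqref{eq:lin} is $\caL_0=-\Delta_\mu+2>0$, making $u\equiv 0$ an $\caF$-stable solution. Applying IFT to $\caH:C^{2,\alpha}(\Sigma)\times[0,\infty)\to C^{0,\alpha}(\Sigma)$ at $(0,0)$ produces a smooth curve $t\mapsto u(t)$ of $\caF$-stable solutions on some $[0,\varepsilon)$; continuity of the first eigenvalue of $\caL$ keeps $\caF$-stability open along the curve. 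Define $T_1$ to be the supremum of $T>0$ for which such a smooth $\caF$-stable curve extends across $[0,T)$.

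The heart of (ii) is then to show this curve extends continuously to $T_1$ with limit $u(T_1)$ failing $\caF$-stability. For this I would derive uniform a priori $C^{2,\alpha}$ bounds on the curve on compact subintervals of $[0,T_1)$: positivity of $\caL$ together with the structure of \eqref{eq:family} allows the construction of sub- and supersolutions giving $L^\infty$ bounds, which standard elliptic regularity upgrades to $C^{2,\alpha}$ bounds; Arzel\`a--Ascoli then produces $u(T_1)$ as a subsequential limit, which solves $\caH(u,T_1)=0$. It cannot be $\caF$-stable, because otherwise IFT at $(u(T_1),T_1)$ would extend the family past $T_1$, contradicting maximality. For the uniqueness claim that \emph{every} $\caF$-stable solution lies on this branch, I would continue in reverse: any $\caF$-stable solution $u^*$ at some $t^*\in(0,T_1)$ extends, by IFT, to a local $\caF$-stable curve, and running this curve backwards in $t$ the same a priori estimates keep it in a compact set of $C^{2,\alpha}$, so it reaches $t=0$ where only $u\equiv 0$ is available; hence $u^*$ lies on the unique branch emanating from the origin.

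Parts (i), (iii) and (iv) are then essentially quotations from earlier work. An upper bound $T_2$ beyond which $\caH(u,t)=0$ admits no solution at all is proved in \cite{HuaLL}; combined with the existence result of \cite{LofM13} for $t<T_0$ this gives $T_0\leq T_1<T_2$. For $t<T_0$ the almost $\R$-Fuchsian family of \cite[Thm 8.1]{LofM13} provides solutions that one can verify are $\caF$-stable (either by Theorem \ref{thm:unique} or directly from the construction), and by the uniqueness just proved they coincide with our $u(t)$. Finally, the mountain-pass construction of \cite{HuaLL} produces, for each $t\in(0,T_1)$, a second solution of Morse index at least one, hence one that is not $\caF$-stable.

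The main obstacle, I expect, is obtaining the a priori $C^0$ estimate along the $\caF$-stable curve purely from positivity of $\caL$ and the structure of \eqref{eq:family}. Without such a bound the continuation argument fails both at $T_1$ and in the reverse direction needed for uniqueness, so one cannot rule out either blow-up or the emergence of $\caF$-stable solutions disconnected from the branch through $u\equiv 0$. I would attempt it through a careful maximum-principle comparison using sub- and supersolutions of \eqref{eq:family}, or alternatively via the variational formulation of $\caH$ combined with the pointwise positivity of $\caL$ along the curve.
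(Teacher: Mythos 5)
Your architecture matches the paper's: quote \cite{LofM13} for (i) and (iii), quote \cite{HuaLL} for (iv) and for the local existence of $\caF$-stable branches, and reduce (ii) to a continuity-method argument whose closedness step requires an a priori $L^\infty$ bound along any $\caF$-stable branch. But you have correctly diagnosed, and then left open, exactly the step that constitutes the paper's entire contribution to this theorem. The missing idea is a \emph{monotonicity lemma}: along any local family $u(t)$ of $\caF$-stable solutions through $u(\tau)$, one has $\dot u(\tau)\leq 0$ pointwise on $\Sigma$. This is proved not by sub/supersolution barriers but by differentiating \eqref{eq:family} in $t$ to get $-\caL_\tau\dot u = 4\tau\|\caQ_0\|^2_\mu e^{-2u}\geq 0$, and then testing the positive operator $\caL_\tau$ against $\dot u^+=\max\{\dot u,0\}$: a careful integration by parts (over the superlevel sets $\{\dot u>\epsilon_j\}$ for regular values $\epsilon_j\searrow 0$) shows $0\geq\langle-\caL\dot u^+,\dot u^+\rangle=\int_{\{\dot u>0\}}v\,\dot u\,dA_\mu\geq 0$ with $v>0$ a.e., forcing $\dot u\leq 0$. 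Your proposed barrier construction is not a substitute: the non-uniqueness of solutions for $0<t<T_1$ (part (iv)) means no naive comparison argument can isolate the stable branch, and you give no mechanism by which ``positivity of $\caL$'' converts into a $C^0$ bound.

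Once the monotonicity lemma is in hand, two things follow that your sketch also needs but does not supply. First, the $L^\infty$ bound: the maximum principle gives $u_t\leq 0$ for every solution, and monotonicity gives $u_t\geq u_\tau$ for $t\in(\tau_0,\tau]$, so the backward continuation from an arbitrary $\caF$-stable solution at time $\tau$ stays in a fixed $L^\infty$ (hence, by elliptic bootstrapping, $C^{2,\alpha}$) ball all the way to $t=0$. Second, and this is a gap in your uniqueness argument independent of the estimates: you must rule out the backward branch \emph{losing} $\caF$-stability at some $t'\in(0,t^*)$ before it reaches the origin, since the IFT continuation would halt there. The paper handles this by computing $\dot\caL = 2\dot u e^u + 8t\|\caQ_0\|^2_\mu e^{-2u}(\dot u - t)\leq 0$ from $\dot u\leq 0$, so $\caL_t\geq\caL_\tau>0$ for all $t\in[0,\tau]$ along the path; ``continuity of the first eigenvalue'' alone gives only openness, not this global persistence. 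Note also that the paper runs the continuity method \emph{backwards} from an arbitrary $\caF$-stable solution to $t=0$, which delivers existence of the branch through that solution and its identification with the branch from $u\equiv 0$ in one stroke, whereas your forward construction from $t=0$ would still leave the ``all $\caF$-stable solutions lie on this family'' claim to be proved separately.
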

\begin{rem}
This result is analogous to Uhlenbeck's description of the bifurcation in families of solutions to the Gauss equation for
minimal surfaces in $\RH^3$ \cite[Thm 4.4]{Uhl}. We note that for Uhlenbeck the right notion of stability was stability with
respect to the area functional. In our case that gives no extra control, since all minimal Lagrangian surfaces in
$\CH^2/\rho$ are stable by a theorem of Oh \cite{Oh}.
\end{rem}
Part (i) comes from \cite{LofM13}, while (iii) comes from \cite{LofM13} and Theorem \ref{thm:unique} above. Part (iv) and the 
existence of a local family of unique $\caF$-stable solutions
for $t<T_1$ come from \cite{HuaLL}. Here we provide the rest of (ii) via the following lemma.
\begin{lem}
Let $\caQ_0$ be a holomorphic cubic differential on $\Sigma_c$, and let $\tau>0$ be such that $u(\tau)$ is an
$\caF$-stable solution. Let $u(t)$ be the local family of $\caF$-stable solutions to
\eqref{eq:family} through $u(\tau)$. Then $\dot u(\tau)\leq 0$ on all of $\Sigma_c$.
\end{lem}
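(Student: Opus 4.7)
The plan is to differentiate the equation \eqref{eq:family} along the family of $\caF$-stable solutions at $t=\tau$, identify the resulting equation as $\caL\dot u = g$ with $g\leq 0$, and then use the $\caF$-stability hypothesis to invoke a maximum principle forcing $\dot u \leq 0$.

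Since $\|t\caQ_0\|_\mu^2 = t^2\|\caQ_0\|_\mu^2$, differentiating $\caH(u(t),t)=0$ at $t=\tau$ and rearranging gives
\[
-\Delta_\mu \dot u + \bigl(2e^{u} - 4\tau^2 \|\caQ_0\|_\mu^2 e^{-2u}\bigr)\dot u \;=\; -4\tau\,\|\caQ_0\|_\mu^2 e^{-2u},
\]
whose left-hand side is $\caL\dot u$ for the $\caL$ of \eqref{eq:lin}, evaluated at $u(\tau)$ and cubic differential $\tau\caQ_0$. Since $\tau>0$ and the nonzero holomorphic cubic differential $\caQ_0$ has only isolated zeros, the right-hand side is pointwise non-positive and strictly negative on a dense open subset of $\Sigma_c$.

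Next, pair this equation with the positive part $\phi:=(\dot u)^+$. Elliptic regularity makes $\dot u$ smooth, so $\phi\in H^1(\Sigma_c)$ and the standard Stampacchia truncation identities $\nabla\phi = \chi_{\{\dot u>0\}}\nabla\dot u$ and $\dot u\,\phi = \phi^2$ hold almost everywhere. An integration by parts then yields
\[
\langle \caL\phi,\phi\rangle_{L^2} \;=\; \langle \caL\dot u,\phi\rangle_{L^2} \;=\; -4\tau\int_{\Sigma_c}\|\caQ_0\|_\mu^2 e^{-2u}\,\phi\,v_\mu \;\leq\; 0.
\]
The $\caF$-stability of $u(\tau)$ says precisely that $\langle\caL\cdot,\cdot\rangle_{L^2}$ is positive definite, so $\phi\equiv 0$, i.e., $\dot u \leq 0$ on all of $\Sigma_c$.

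I do not anticipate a substantive obstacle: $\caF$-stability is designed to make exactly this kind of maximum principle available, and the regularity of $\dot u$ needed to justify the truncation identity follows from the Implicit Function Theorem setup in the appropriate Sobolev spaces in which the family $u(t)$ is produced.
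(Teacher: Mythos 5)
Your proof is correct and follows essentially the same route as the paper: differentiate the family to get $\caL\dot u = -4\tau\|\caQ_0\|_\mu^2 e^{-2u}\leq 0$, test against $(\dot u)^+$, and let $\caF$-stability force $(\dot u)^+\equiv 0$. The only difference is cosmetic: you justify the identity $\langle\caL(\dot u)^+,(\dot u)^+\rangle=\langle\caL\dot u,(\dot u)^+\rangle$ via the standard Stampacchia truncation identities in $H^1$, whereas the paper reaches the same identity by integrating by parts over the sets $\{\dot u>\epsilon_j\}$ for a Sard sequence of regular values $\epsilon_j\searrow 0$.
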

\begin{proof}
By differentiating \eqref{eq:family} we find that $\dot u $ satisfies
\[
-\caL_\tau \dot u =4\tau\|\caQ_0\|_\mu^2 e^{-2u}.
\]
Elliptic regularity implies $\dot u$ is $C^\infty$.  Now define $\dot u^+ = \max\{\dot u(\tau),0\}$.  

Then $\dot u^+$ is in the Sobolev space $H(\Sigma)$ and $d\dot u^+=0$ wherever $\dot u\le0$ (see e.g.\cite{GilT}).  Recall we
define
$$
\langle -\Delta_\mu \dot u^+, \dot u^+\rangle = \int_\Sigma \|d\dot u^+\|^2_\mu\,dA_\mu
$$
in this case. Let  $v = 4\tau \|\mathcal Q_0\|_\mu^2 e^{-2u}$, and let $\epsilon_j\searrow 0$ be regular values of $\dot u$
(as guaranteed by Sard's Theorem). Thus we can integrate by parts, as each $\{\dot u =\epsilon_j\}=\partial\{\dot u
>\epsilon_j\}$ is a smooth 1-manifold. Let $w_j = \max\{\dot u-\epsilon_j,0\}$. Now since $\mathcal L>0$,
\begin{eqnarray*}
0 &\ge& \langle -\mathcal L \dot u^+,\dot u^+\rangle \\
&=& \int_{\Sigma} \left[ -\|d\dot u^+\|_\mu^2 +(-2e^u +4 \|\mathcal Q_0\|_\mu^2 e^{-2u})(\dot u^+)^2 \right] \,dA_\mu \\
&=& \int_{\{\dot u>0\}} \left[ -\|d\dot u\|_\mu^2 +(-2e^u +4 \|\mathcal Q_0\|_\mu^2 e^{-2u})\dot u^2 \right] \,dA_\mu \\
&=& \lim_{j\to\infty}\int_{\{\dot u>\epsilon_j\}} \left[ -\|d w_j\|_\mu^2 +(-2e^u +4 \|\mathcal Q_0\|_\mu^2 e^{-2u})\dot
u^2 \right] \,dA_\mu \\
&=& \lim_{j\to\infty}\int_{\{\dot u>\epsilon_j\}} \left[ w_j\Delta_\mu w_j +(-2e^u +4 \|\mathcal Q_0\|_\mu^2 e^{-2u})\dot
u^2 \right] \,dA_\mu \\
&=& \lim_{j\to\infty}\int_{\{\dot u>\epsilon_j\}} \left[ (\dot u -\epsilon_j) \Delta_\mu \dot u  +(-2e^u +4 \|\mathcal
Q_0\|_\mu^2 e^{-2u})\dot u^2 \right] \,dA_\mu \\
&=& \int_{\{ \dot u>0\}} (-\mathcal L \dot u)\dot u \,dA_\mu \\
&=& \int_{\{ \dot u>0\}} v\dot u \,dA_\mu \ge0. \\
\end{eqnarray*}
(The limits above are valid by the Dominated Convergence Theorem.) 

Since $v$ is positive almost everywhere, the last inequality is strict if $\dot u>0$ anywhere on $\Sigma$. Thus
by contradiction $\dot u(\tau)\le 0$ everywhere on $\Sigma$.
\end{proof}
\begin{proof}[Proof of Thm \ref{thm:rays}(ii)] 
Let $u(\tau)$ be an $\caF$-stable solution, with local family $u(t)$ and let $\caL_t$ be the corresponding family of
linearised operators \eqref{eq:lin}. Now
\[
\dot\caL = 2\dot{u}e^u + 8t\|\caQ\|^2_\mu e^{-2u}(\dot{u}-t),
\]
which is nonpositive for $t>0$ by the previous Lemma. Thus $\caL_\tau>0$ implies $\caL_t>0$ for all 
$t\in[0,\tau]$ in the path of solutions.

The Maximum Principle shows that every solution to \eqref{eq:family} is nonpositive.  Thus for any $t$ in an interval of
the form $(\tau_0,\tau]$ the proposition implies $0\ge u_t \ge u_\tau$, and thus we have uniform $L^\infty$ bounds on
$u_t$ for all $t\in(\tau_0,\tau]$.  Then the $L^p$ theory, applied to \eqref{eq:family}, and standard bootstrapping show
that the limit
\[
\lim_{t\to\tau_0^+} u_t
\]
exists and is a solution $u_{\tau_0}$ to the equation.  This provides a closedness argument for the continuity method.
On the other hand the $\caL_t>0$ condition, verified in the previous paragraph, provides openness as well, and thus
we can extend the solution space back down to $t=0$.
\end{proof}

\section{Surfaces with zero cubic holomorphic differential.}\label{sec:Q=0}

Minimal (possibly branched) immersions for which $\caQ=0$ have particularly important properties. They include all the
holomorphic and anti-holomorphic immersions and, by Theorem \ref{thm:xi}, the extension class zero cases when $f$ is not
holomorphic or anti-holomorphic. In all such cases, the Higgs bundle $E$ is a \emph{Hodge bundle} (or \emph{variation of Hodge
structure}), i.e., $E=\oplus_{i=1}^m E_i$ for proper sub-bundles $E_i$ for which $\Phi:E_i\to E_{i+1}\otimes K$, with
$E_{m+1}=0$. For $PU(2,1)$ the \emph{length} $m$ of the Hodge bundle must be either
two or three \cite{Got}. We will show below that the length-two
Hodge bundles correspond to holomorphic or anti-holomorphic immersions, while immersions which arise from Theorem
\ref{thm:xi} with $\xi=0$ give length-three Hodge bundles.

Hodge bundles play the central role in calculating the Betti numbers of the smooth components
of the moduli space $\caH(\Sigma_c,G)$ of polystable Higgs bundles, and therefore the Betti numbers of the
representation space $\caR(G)$. For on smooth components the \emph{Hitchin function}
$\|\Phi\|^2_{L^2}:\caH(\Sigma_c,G)\to\R$,
is a perfect Morse-Bott function, whose critical points are the Hodge bundles. The length-two Hodge bundles are minima, 
while the length-three Hodge bundles have non-zero Morse index \cite{Got}.

\subsection{Holomorphic and anti-holomorphic surfaces.}

By Toledo's theorem \cite{Tol} every \emph{maximal representation} (those for which $\tau(\rho)=\pm\chi(\Sigma)$) 
leaves invariant a complex line and acts on that line as a Fuchsian representation. Such representations 
are reducible. To understand the non-maximal $\rho$-equivariant holomorphic or anti-holomorphic immersions, we will start by
describing their Hodge bundles.  First we note that for
any holomorphic $\rho$-equivariant immersion $f:\caD\to\CH^2$ the area form $v_\gamma$ for the induced
metric equals $f^*\omega$. It follows from the definition \eqref{eq:taudef} that $\tau(\rho)>0$. 
For anti-holomorphic immersions $f^*\omega = -v_\gamma$ so that $\tau(\rho)<0$. The next lemma characterises
the Higgs bundle data for representations which admit either holomorphic or anti-holomorphic branched immersions.
\begin{thm}\label{thm:hol}
Each irreducible representation $\rho$ which admits a branched holomorphic $\rho$-equivariant immersion 
corresponds to a Hodge bundle $(V\oplus 1,\Phi)$ with $\Phi=(\Phi_1,0)$, where $V$ is given by a non-trivial
extension of the form
\begin{equation}\label{eq:holext}
0\to K^{-1}(B)\stackrel{\Phi_1}{\to} V \to K^{-2}L\to 0.
\end{equation}
Here $B$ is an effective divisor of degree $b\geq 0$ (the divisor of branch points of the immersion) and $L$ is a 
line bundle of degree $l$, only determined up to isomorphism. These satisfy the inequalities
\begin{equation}\label{eq:ineq}
3(g-1)+\tfrac12 b< l< 6(g-1)-b ,\quad 0\leq b< 2(g-1).
\end{equation}
In particular, $\tau(\rho) = \tfrac23(6g-6-b-l)$ and $0<\tau(\rho)<2(g-1)-b$. 

Moreover, $(E,\Phi)$ corresponds to a branched anti-holomorphic immersion $f$ if and only if $\bar f$ is the  branched
holomorphic immersion determined by $(E^*,\Phi^t)$.
\end{thm}
Note that by $\bar f$ we mean the post-composition of $f$ with the natural anti-holomorphic involution on $\CH^2$ which
descends from complex conjugation on $\C^{2,1}$. Clearly $f$ is $\rho$-equivariant precisely when $\bar f$ is
$\bar\rho$-equivariant. The map $\rho\mapsto\bar\rho$ is an involution on $\Hom(\pi_\Sigma,G)/G$ for which
$\tau(\bar\rho)=-\tau(\rho)$: it fixes the representations with values in $SO(2,1)$. 

The conditions in \eqref{eq:ineq} are only necessary conditions to ensure stability: there will be
extensions which do not provide stable Higgs bundles. As the proof below shows, stability requires the additional
condition that every line subbundle $F\subset V$ with $F\neq\Im(\Phi_1)$ has $\deg(F)<\tfrac13\deg(V)$.  
Nevertheless one can describe the structure of the moduli space of such stable extensions:
see \S\ref{subsec:caW} below.
\begin{proof}
First suppose $(E,\Phi)$, with $E=V\oplus 1$ and $\Phi=(\Phi_1,\Phi_2)$, is a length-two Hodge bundle with $\deg(V)<0$. 
In this case by \cite[\S 3]{Got} we have $\Phi_2=0$. The corresponding representation $\rho$ admits a $\rho$-equivariant 
harmonic map $f:\caD\to\CH^2$ determined by the sub-bundle $1$ as a section $\P E_-$ with $df' = \Phi$. Therefore
$\bar\partial f'=0$ and $f$ is holomorphic. Conversely, suppose $\rho$ admits a holomorphic immersion $f$.
Taking $V=f^{-1}T'\CH^2$ and $\Phi = \partial f':1\to KV$ gives a length-two Hodge bundle with $\deg(V) < 0$.  
The involution $(E,\Phi)\to (E^*,\Phi^t)$ on Higgs bundles maps length-two Hodge bundles with $\Phi_2=0$ to those with
$\Phi_1=0$. In the latter case $V\simeq f^{-1}T''\CH^2$ with the opposite complex structure, and the map $f$ is
anti-holomorphic. 

Now the structure of $(V\oplus 1,\Phi)$ follows a simplified version of the argument in the proof of Theorem
\ref{thm:minimal}. We can think of $\Phi_1$ as a holomorphic section of $KV$, with divisor $B  \geq 0$ corresponding to the
branch divisor of $f$. The
bundle injection $\Phi_1:K^{-1}(B  )\to V$ has image $V_1$ and quotient line bundle $V_2=V/V_1$. 
Provided $V$ is not the direct sum $V_1\oplus V_2$ the $\Phi$-invariant sub-bundles of $E$
are $V_1, V_1\oplus 1$, $V$ and any line subbundle $F\subset V$ with $F\neq V_1$. The stability inequalities 
are therefore
\[
\deg(V_1)<\tfrac13 \deg(V),\quad \tfrac12\deg(V_1)<\tfrac13\deg(V),\quad \tfrac12\deg(V)<\tfrac13\deg(V),\quad
\deg(F)<\tfrac13\deg(V).
\]
The inequalities \eqref{eq:ineq} are equivalent to the first three of these.
On the other hand, if $V$ is the direct sum then $V_2$ is also $\Phi$-invariant and stability
requires the additional inequality 
\[
\deg(V_2)<\tfrac 13\deg(V),\ \text{i.e.,}\quad \tfrac 23\deg(V) < \deg(V_1),
\] 
so this is not possible. For later convenience we write $V_2=K^{-2}L$ and the inequalities \eqref{eq:ineq} follow 
from $\deg(V_1)=b-2(g-1)$ and $\deg(V_2)=l-4(g-1)$. 
\end{proof}
Note that while the splitting of $V\simeq f^{-1}T'\CH^2/\rho$ into $T\Sigma\oplus T\Sigma^\perp$ is $J$-invariant, 
the sub-bundle $T\Sigma^\perp$ is not $\bar\partial_E$-invariant unless $\rho$ is reducible. Indeed, the normal bundle is
$\partial_E$-invariant (since it is paired with $T\Sigma$ by the Hermitan metric) 
so the induced structure of this splitting makes the normal bundle \emph{anti-holomorphic}. 
\begin{rem}[Reducible representations.]\label{rem:reducible}
Although $E =V\oplus 1$ cannot be stable when $V$ is a trivial extension, it can be polystable.
This corresponds to a reducible reductive representation. Such representations either:
\begin{enumerate}
\item factor through a maximal compact subgroup, or,
\item factor through $P(U(1,1)\times U(1))$.
\end{enumerate}
This is easy to see. We may simplify things, by replacing $\rho$ by $\bar\rho$ if necessary, to assume that
$\tau(\rho)\geq 0$ and thus $\Phi_2=0$.  To be strictly polystable $(E,\Phi)$ must decompose into either
\[
(i)~(V,0)\oplus (1,0),\ \text{or}\quad (ii)~(1\oplus V_1,\Phi_1)\oplus (V_2,0),
\]
where $V=V_1\oplus V_2$ and $\Phi_1:1\to V_1$. In the first case $V$ must be a stable rank two bundle of degree zero (to
have the same slope as $1$) and therefore the representation lies in a maximal compact subgroup and has $\tau(\rho)=0$. 
We note that the corresponding harmonic map $f$ is constant. In the second case $(1\oplus V_1,\Phi_1)$ corresponds to a
representation into $U(1,1)$. In this case polystability requires
\[
\deg(V_1)<\tfrac12\deg(V_1\oplus 1) = \tfrac12 \deg(V_1),\ \text{i.e.,}\quad \deg(V_1)<0,
\]
together with the ``same slope'' condition $\tfrac12\deg(V_1) = \deg(V_2)$.
When we write $V_1 = K^{-1}(B)$ as above we deduce that 
\[
b<2(g-1),\quad \deg(V_2)=\tfrac12 b- g+1.
\]
Thus $b$ is even and
\[
\tau(\rho) = -\tfrac23\deg(V) = -\deg(V_1) = 2g-2-b\in 2\Z.
\]
In particular, such $\rho$ can only admit an unbranched holomorphic map $f$ when $\tau(\rho)$ is maximal, 
i.e., when $\rho$ factors through a Fuchsian representation. The map $f:\caD\to\CH^2$ is a totally geodesic embedding onto a
complex line. 
More generally, the $PU(1,1)$ representation corresponding to the rank two Higgs bundle $(1\oplus V_1,\Phi_1)$ has
Toledo invariant $-\deg(\Hom(1,V_1))$, which therefore equals $\tau(\rho)$.  
Every irreducible representation into $PU(1,1)$ of even Toledo invariant lifts
to $SU(1,1)$, and therefore provides a representation into $P(U(1,1)\times U(1))$. Thus the whole structure of
Higgs bundles for irreducible representations in $SU(1,1)$ \cite{Hit87} lifts up to provide reducible representations into 
$PU(2,1)$, and this is what we are seeing above. Note that those which are non-maximal cannot be convex cocompact, since
they preserve a complex line but act non-cocompactly on this line.
\end{rem}
Let $\eta\neq 0$ be the extension class of the extension \eqref{eq:holext} for $\rho$ irreducible. Since $L$ is only determined up to 
isomorphism the Higgs bundle only determines
$\eta$ up to scale. Therefore each Higgs bundle in Theorem \ref{thm:hol} corresponds to a quadruple $(\Sigma_c,B,L,\C.\eta)$ where 
$\C.\eta$ is the line generated by $\eta$, i.e., a point in $\P H^1(\Sigma_c,KL^{-1}(B))$. In fact, it is not hard to show 
that the rescaling of $\eta$ corresponds to the $\C^*$-action on the Higgs bundle $(E,\Phi)\mapsto (E,t\Phi)$. Since Hodge bundles are invariant under
this action, this gives another way of interpreting the independence of the data on the scale of $\eta$. However, once $(\rho,f)$ is known there
is a preferred representative for $\eta$ given by the geometric invariants of $f$ 
via the Dolbeault isomorphism. First we need to introduce the tensor
\[
S\in\caE^0(\Sigma_c,K^2\bar K^2),\quad S = h(\II^{2,0},\II^{2,0}),
\]
where $\II^{2,0}=\pi^\perp\nabla'\partial f'$ is the $(2,0)$ component of the second fundamental form of $f$ (here
$\pi^\perp:f^{-1}T'\CH^2/\rho\to T\Sigma^\perp$ is projection onto the normal bundle). We will show that $\II^{2,0}$ is a 
holomorphic section of $K^2\otimes T\Sigma^\perp$.
\begin{thm}\label{thm:eta}
Let $f:\caD\to\CH^2$ be a $\rho$-equivariant branched holomorphic immersion, with $\rho$ irreducible and data 
$(\Sigma_c,B,L,\C.\eta)$. 
Then $L\simeq \caO(D)$, where $D$ is the divisor of zeroes of $\II^{2,0}$, and we can choose $\eta$ so that under
the Dolbeault isomorphism it maps to the cohomology class
\[
-[S/\gamma]\in H^{0,1}(\Sigma_c,K(B-D)).
\]
\end{thm}
\begin{proof}
We follow the steps in the proofs of Theorem \ref{thm:xi} but using a local frame more suited to holomorphic maps. As
before, use $\ell_0\subset E$ to denote $1$ and write its $\partial f'$ transform as $\ell_1\otimes K$. But now take the
further transform of $\ell_1$, so we have a harmonic sequence
\[
\ell_0\stackrel{\pi_0^\perp Z}{\to}\ell_1\stackrel{\pi_1^\perp Z}{\to}\ell_2\stackrel{\pi_2^\perp Z}{\to}0,
\]
in each chart $(U,z)$. The last step terminates the sequence because $\pi_2^\perp Z:\ell_2\to\ell_0$ is the adjoint to 
$\pi_0^\perp\bar Z:\ell_0\to\ell_2$, which 
represents $\bar\partial f'=0$. As before, let $f_0\in\Gamma(\ell_0)$ be global and parallel with $\g{f_0}{f_0}=-1$.
Set
\[
\sigma_1 = \pi_0^\perp Zf_0 = Zf_0,\quad \sigma_2 = \pi_1^\perp Z\sigma_1=\pi_1^\perp ZZf_0.
\]
Since $f_0$ is a holomorphic section of $\ell_0$, $\sigma_j$ is a holomorphic section of $\ell_j$
(by standard harmonic sequence theory \cite{BolW}).
Under the isomorphism $f^{-1}T'\CH^2/\rho\simeq\Hom(1,V)\simeq V$ the image $\partial f'(T^{1,0}\Sigma)$ of the holomorphic 
tangent bundle of $\Sigma$ is identified with $\ell_1$, i.e., $\ell_1\simeq K^{-1}(B)$. 
Clearly the induced metric of $f$ is $u_1^2|dz|^2$ for $u_1 = |\sigma_1|$.  Further, since
\[
(\nabla_Z Z)f_0 = \pi_0^\perp Z\pi_0^\perp Z f_0- \pi_0^\perp Z \pi_0 Zf_0=\pi_0^\perp ZZf_0,
\]
we have
\[
\II(Z,Z)=[\pi^\perp(\nabla_Z Z)]f_0 = \pi_1^\perp ZZf_0=\sigma_2.
\]
In particular, $\II^{2,0}$ is a holomorphic section of $K^2\otimes \ell_2$.
Set $s_2=|\sigma_2|$. Then 
\[
S(Z,Z,\bar Z,\bar Z) = \g{\II(Z,Z)}{\II(Z,Z)} = |\sigma_2|^2= s_2^2.
\]
Thus in a chart $U$ in which neither $u_1$ nor $s_2$ vanishes we have a local $U(2,1)$ frame given by $f_1,f_2,f_0$ where 
$f_1=\sigma_1/u_1$, $f_2 = \sigma_2/s_2$. Straighforward calculations as before give
\begin{align}\label{eq:Zhol}
Zf_1& = (Z\log u_1) f_1 + u_1^{-1}s_2f_2,\\
Zf_2& = (Z\log s_2) f_2,\notag \\
Zf_0& = u_1f_1.\notag
\end{align}
From this we can read off the connexion $1$-form for the projectively flat connexion $\nabla$ in this frame.
Now let $\varphi:V|U\to U\times\C^2$ be the local trivialisation corresponding to $f_1,f_2$, then it follows that
\[
\varphi\circ\bar\partial_E\circ\varphi^{-1} =d\bar z\left[ 
\frac{\partial}{\partial\bar z} - \begin{pmatrix} \bar Z\log u_1 &s_2/u_1\\ 0 &\bar Z\log s_2 \end{pmatrix}
\right].
\]
To deal with zeroes of $u_1$ and $s_2$, we may assume $U$ only has these at $z=0$, to order $p$ and $q$ respectively. In
such a chart we take the local frame $\tilde f_1 = w^{-p}f_1$, $\tilde f_2 = w^{-q}f_2$, where $w=z/|z|$, and in the
corresponding trivialisation $\tilde\varphi$ we have
\[
\tilde\varphi\circ\nabla^{0,1}\circ\tilde\varphi^{-1} =d\bar z\left[ 
\frac{\partial}{\partial\bar z} - \begin{pmatrix} \bar Z\log (u_1/|z|^p) &-s_2w^{p-q}/u_1\\ 0 &\bar Z\log (s_2/|z|^q) 
\end{pmatrix}
\right].
\]
We obtain a local holomorphic trivialisation, with respect to $\bar\partial_E$, by applying a gauge transformation
of the form
\[
R = \begin{pmatrix} 1& a\\ 0&1\end{pmatrix}\begin{pmatrix} |z|^p/u_1 & 0 \\ 0 & |z|^q/s_2\end{pmatrix},
\]
i.e., for $\chi = R\tilde\varphi$ we have $\chi\circ\bar\partial_E\circ\chi^{-1} = d\bar z(\partial/\partial\bar z)$.
This requires
\begin{equation}\label{eq:s/u}
\partial a/\partial\bar z =- s_2^2z^{p-q}/u_1^2.
\end{equation}
When $U$ is contractible this equation has a solution since $s_2^2/u_1^2\sim |z|^{2(q-p)}$ near $z=0$, therefore the 
right hand side of \eqref{eq:s/u} is smooth throughout $U$. 

Now cover $\Sigma_c$ by charts $(U_j,z_j)$ of the type used above, and index the local objects living over $U_j$ by $j$. 
Thus for $V$ we have transition relations $\tilde\varphi_j = \tilde c_{jk}\tilde\varphi_k$ where
\[
\tilde c_{jk} = \begin{pmatrix} \frac{dz_j/dz_k}{|dz_j/dz_k|}& 0 \\ 
0 & \frac{(dz_j/dz_k)^2}{|dz_j/dz_k|^2}\end{pmatrix}
\begin{pmatrix} w_j^{p_j}w_k^{-p_k} & 0 \\ 0 & w_j^{q_j}w_k^{-q_k}\end{pmatrix}.
\]
Therefore $\chi_j = b_{jk}\chi_k$ where 
\begin{equation}
b_{jk} = R_j\tilde c_{jk} R_k^{-1} = 
\begin{pmatrix} z_j^{p_j}z_k^{-p_k}dz_j/dz_k & \lambda_{jk}\\ 0 & z_j^{q_j}z_k^{-q_k} (dz_j/dz_k)^2 \end{pmatrix},
\end{equation}
for
\begin{equation}
\lambda_{jk} = a_jz_j^{q_j}z_k^{-q_k} (dz_j/dz_k)^2 - a_k z_j^{p_j}z_k^{-p_k}dz_j/dz_k,
\end{equation}
and we have used the fact that
\[
u_{1k}/u_{1j} = |dz_j/dz_k|,\quad s_{2k}/s_{2j} = |dz_j/dz_k|^2.
\]
In particular, this shows that $V$ is an extension of the line bundle $K^{-1}(B)$ by the line bundle $K^{-2}(D)$ where $D$ is the divisor of zeroes of
$\II^{2,0}$, and therefore we have fixed an isomorphism $L\simeq \caO(D)$.  
As earlier, the extension class of $V$ is given by the Cech cohomology class $\eta$ of the $1$-cocycle $\{(\eta_{jk},U_j,U_k)\}$ where 
\begin{align*}
\eta_{jk}& = z_j^{-p_j}z_k^{p_k} (dz_k/dz_j)\lambda_{jk} dz_k\\
& = a_jz_j^{q_j-p_j}dz_j - a_kz^{q_k-p_k}dz_k.
\end{align*}
This is plainly a coboundary for Cech cohomology in smooth sections of $K(B-D)$ of the form
$\delta\tau$ where $\{(\tau_j,U_j)\}$ has $\tau_j = a_jz^{q_j-p_j}dz_j$. Under the Dolbeault isomorphism this corresponds to 
\[
\bar\partial\tau_j = -\frac{s_{2j}^2}{u_{1j}^2}dz_jd\bar z_j,
\]
which is the local expression for $-S/\gamma$.
\end{proof}

\subsection{Surfaces arising from zero extension class.}
Theorems \ref{thm:minimal} and \ref{thm:xi} imply that Higgs bundles for non-holomorphic minimal surfaces with 
$\caQ=0$ are exactly the length-three Hodge bundles. When $\caQ=0$ we have a trivial extension bundle and can take
\[
E_1\oplus E_2\oplus E_3= K(-D_2) \oplus 1\oplus K^{-1}(D_1),
\] 
to get $\Phi:E_i\to KE_{i+1}$. Conversely, any length-three Hodge bundle is projectively equivalent to one of this form
and has $\tr\Phi^2=0$. We will show that $f$ is still related to a holomorphic map, via its harmonic 
sequence  (in the sense of Erdem \& Glazebrook \cite{ErdG}, $f$ is isotropic but non-holomorphic). 
To explain this, we first recall (from e.g., \cite{BolW,EelW}), the notion of the 
\emph{Gauss transforms} $\varphi_1,\varphi_2$ of $f$.

The line bundles $\ell_1,\ell_2$ defined by \eqref{eq:harmseq} both lie inside the subset $E_+$ consisting of fibre 
vectors which have positive length with respect to the $\C^{2,1}$ metric. Let 
\[
\C^{2,1}_+ = \{v\in\C^{2,1}:\g{v}{v}>0\}. 
\]
Then its projective space $\P\C^{2,1}_+$ is an open submanifold of $\CP^2$ on which
$G=PU(2,1)$ acts transitively. We identify it with the orbit of the line $[e_2]$, with isotropy subgroup 
$H_2\simeq P(U(1,1)\times U(1))$, so that $\P\C^{2,1}_+\simeq G/H_2$. 
In fact we can think of it as the complex version of two dimensional de Sitter space, and will henceforth denote it by 
$\CdS^2$.  Its tangent space at the base point $[e_2]$ is identified with the orthogonal
complement $\fm_2=\fh_2^\perp\subset \su(2,1)$ with respect to the Killing form $-\tfrac12\tr(AB)$, and the latter gives 
$\fm_2$ an indefinite Hermitian metric. This extends to the tangent bundle, isomorphic to $G\times_{H_2}\fm_2$, and makes 
$\CdS^2$ a pseudo-Hermitian symmetric space. 
Clearly $\P E_+\simeq \caD\times_\rho\CdS^2$ and therefore $\ell_1,\ell_2$ each determine a smooth $\rho$-equivariant map
$\varphi_1,\varphi_2:\caD\to \CdS^2$, and these will be conformal harmonic with respect to the pseudo-Hermitian metric
(they are isotropic in the sense of \cite{ErdG}). 
Following the terminology of harmonic sequences, we call $\varphi_1$ the 
\emph{$\partial$-Gauss transform} of $f$, and $\varphi_2$ the \emph{$\bar\partial$-Gauss transform} of $f$. 
An immersion into $\CdS^2$ is \emph{timelike} when its induced metric is negative definite (away from branch points).
\begin{prop}
Let $f:\caD\to\CH^2$ be $\rho$-equivariant and not $\pm$-holomorphic. Then $\caQ=0$ if and only if 
the $\bar\partial$-transform $\varphi_2:\caD\to\CdS^2$ of $f$ is a timelike $\rho$-equivariant holomorphic map, 
branched at the divisor of complex points $D_2$ of $f$.
\end{prop}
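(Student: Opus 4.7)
The plan is to compute $d\varphi_2$ directly in the local Toda frame of Lemma~\ref{lem:Todaframe}, determining both when $\varphi_2$ is holomorphic and the signature of the induced metric.

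First I would observe that $\varphi_2$ is $\rho$-equivariant by construction: lifting $E$ to the universal cover, the projectively flat connexion $\nabla$ trivialises it as $\caD\times\C^{2,1}$ with monodromy $\rho$, and the smooth line subbundle $\ell_2\subset E$ becomes a $\rho$-equivariant map $\caD\to\P\C^{2,1}$.  Since $\g{\sigma_2}{\sigma_2}=u_2^{2}\ge 0$ with equality exactly on $D_2$, the image lies in $\CdS^2=\P\C^{2,1}_+$ away from the complex points.  Moreover, a smooth line subbundle of the flat bundle defines a holomorphic map into $\P\C^{2,1}$ if and only if, for every local smooth section $\sigma$ of it, the flat derivative $\nabla_{\bar Z}\sigma$ lies back in the subbundle.

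Working in the Toda frame with $\sigma_2=u_2 f_2$, the $(0,1)$-part of $\nabla$ is determined by the unitarity of the frame with respect to $\mathrm{diag}(1,1,-1)$ together with the explicit $(1,0)$-matrix \eqref{eq:nabla}; a direct calculation yields
\[
\nabla_{\bar Z}\sigma_2=-\tfrac{\bar Q}{u_1}\,f_1+2(\bar Z u_2)\,f_2,\qquad \nabla_{Z}\sigma_2=u_2^{2}\,f_0.
\]
The first formula shows that $\nabla_{\bar Z}\sigma_2$ lies in $\ell_2$ iff $Q=0$, so on $\Sigma_c\setminus D_2$ the map $\varphi_2$ is holomorphic precisely when $\caQ=0$.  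Replacing $f_2$ with the modified frame $\tilde f_2=(z/|z|)^p f_2$ from \eqref{eq:S} in a chart centred at a point of $D_2$ of order $p$ then confirms that $\varphi_2$ extends holomorphically across $D_2$ with a branch point of order $p$ there; the reverse implication is immediate from the same formula.

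Granted $\caQ=0$, the second formula gives $d\varphi_2(Z)\equiv u_2^{2}f_0\bmod\ell_2$.  On $\CdS^2\cong G/P(U(1,1)\times U(1))$, the pseudo-Hermitian metric at a positive-length line $\ell$ is given, via the identification $T'\CdS^2\simeq\Hom(\ell,\ell^\perp)$ analogous to \eqref{eq:TCH}, by $H(A,B)=\g{A\sigma}{B\sigma}/\g{\sigma}{\sigma}$ for any $\sigma\in\ell$.  With $A\sigma_2=u_2^{2}f_0\in\ell_0$ and $\g{f_0}{f_0}=-1$ this gives $\varphi_2^{*}g_{\CdS^2}(Z,\bar Z)=-u_2^{2}<0$, so the induced metric is negative definite on $\Sigma_c\setminus D_2$ and the map is timelike with branch locus $D_2$.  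The step that needs most care is this identification of the pseudo-Hermitian metric in the projective model, and in particular the sign: the timelike character of $\varphi_2$ is exactly the switch of sign compared with the $\CH^2$ case of \eqref{eq:TCH}, arising because $\ell_2$ now has positive length.
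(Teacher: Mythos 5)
Your proposal is correct and follows essentially the same route as the paper: both arguments run through the harmonic sequence and the local Toda frame, identify $\bar\partial\varphi_2'$ with $\pi_2^\perp\nabla_{\bar Z}|_{\ell_2}$ (your formula $\nabla_{\bar Z}\sigma_2=-\tfrac{\bar Q}{u_1}f_1+2(\bar Z u_2)f_2$ is the frame-level version of the paper's identity $Q=-\g{\pi_0^\perp Zf_0}{\pi_2^\perp\bar Z\,\pi_0^\perp\bar Zf_0}$), and then compute the induced metric to be $-u_2^2|dz|^2$ when $\caQ=0$. Your treatment is in fact slightly more explicit than the paper's on the extension of $\varphi_2$ across $D_2$ and on the sign normalisation of the pseudo-Hermitian metric, which agrees with the paper's convention $-\tfrac12\tr(AB)$ on $\fm_2$.
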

\begin{proof}
Let $\varphi_2:\caD\to \CdS^2$ be the $\bar\partial$-Gauss transform $f$. Write the differential of $\varphi_2$ 
as $d\varphi_2' = \partial\varphi_2'+\bar\partial\varphi_2'$ in terms of the type decomposition
\[
\varphi_2^{-1}T_{\ell_2}\CdS^2 = \Hom(\ell_2,\ell_2^\perp)\oplus\Hom(\ell_2^\perp,\ell_2).
\]
In local coordinates
\[
\bar\partial\varphi_2'(\bar Z) = \pi_2^\perp \bar Z.
\]
But from \eqref{eq:Qf0} and the fact that $\ell_0,\ell_1,\ell_2$ are mutually orthogonal we have
\begin{align*}
Q & = \g{\pi_0^\perp Z\pi_0^\perp Zf_0}{\pi_0^\perp\bar Z f_0},\\
& = -\g{\pi_0^\perp Zf_0}{\pi_2^\perp\bar Z\pi_0^\perp\bar Z f_0}.
\end{align*}
Therefore if neither $\pi_0^\perp Zf_0$ nor $\pi_0^\perp\bar Z f_0$ is zero, $\caQ$ vanishes if and only if
$\pi_2^\perp\bar Z$ is identically zero on $\ell_2$. Hence $\bar\partial\varphi_2'$ vanishes and $\varphi_2$ is 
holomorphic. 

Finally, we claim that when $\caQ=0$ the induced metric for $\varphi_2$ is $-u_2^2|dz|^2$. To see this, 
let $(f_0,f_1,f_2)$ be a local Toda frame for $f$. 
Then by definition $\varphi_2$ is given locally by the family of lines $[f_2]$. To calculate the differential we use
\eqref{eq:nabla} to deduce that in this frame $d\varphi_2$ is represented by 
\[
\begin{pmatrix}  & 0 & \\ Q/u_1u_2 & & 0\\  & u_2 & \end{pmatrix} dz +
\begin{pmatrix}  &-\bar Q/u_1u_2 & \\ 0 & & u_2\\  & 0 & \end{pmatrix} d\bar z. 
\]
Here we use blank spaces to indicate the Lie subalgebra $\fh_2\subset\fg$ of isotropy group $H_2$: 
relative to the frame $d\varphi_2$ takes values in $\fh_2^\perp$.
Therefore the induced metric is 
\[
-\tfrac12\tr (d\varphi_2)^2 = (\frac{|Q|^2}{u_1^2u_2^2} -u_2^2)|dz|^2.
\]
\end{proof}
Note that, since $\partial\varphi_2':\ell_2\to K\ell_0$, $f$ is the $\partial$-Gauss transform of $\varphi_2$. 

\section{Moduli}\label{sec:moduli}

Theorems \ref{thm:minimal} and \ref{thm:hol} provide parameterisations for different components of the set
\[
\caV = \{(\rho,f):\text{$\rho$ irreducible, $f$ branched minimal}\}/G,
\]
where the quotient is by the simultaneous action of $G$ as conjugation of $\rho$ and ambient isometry of $f$. 
By those theorems it is natural to write $\caV$ as a disjoint union of the sets
\[
\caV(d_1,d_2) = \{(\rho,f)\in\caV:\text{$f$ has $d_1$ anti-complex points and $d_2$ complex points}\}
\]
and 
\[
\caW_\tau = \{(\rho,f)\in\caV: \text{$f$ is $\pm$-holomorphic and}\  \tau(\rho)=\tau\}.
\]

\subsection{The structure of $\caV(d_1,d_2)$.}

As explained at the end of \S 2, each point of $\caV(d_1,d_2)$ is parametrised by a quadruple 
$(\Sigma_c,D_1,D_2,\xi)$. To understand the space of these quadruples we must understand 
how $H^1(\Sigma_c,K^{-2}(D_1+D_2))$ varies with $(\Sigma_c,D_1,D_2)$.  Note that
\[
\deg(K^{-2}(D_1+D_2)) = d_1+d_2-4(g-1)<0,
\]
by the inequalities \eqref{eq:stab}. Whenever a holomorphic line bundle $\caL$ over $\Sigma_c$ has negative degree 
$d$ its first cohomology has, by the Riemann-Roch theorem, dimension
\[
h^1(\caL) = g-1 -d.
\]
Therefore as $\caL$ moves over $\Pic_d(\Sigma_c)$, the Picard component of degree $d$ line bundles, the dimension of
$H^1(\Sigma_c,\caL)$ is constant. Now $\Sigma_c\times\Pic_d(\Sigma_c)$ carries a tautological line bundle
$\caP$ (sometimes called a Poincar\'{e} line bundle) whose fibre over $(p,\caL)$ is the fibre of $\caL$ at $p$. The 
vector spaces $H^1(\Sigma_c,\caL)$ are the fibres of the higher direct image $R^1\pi_*(\caP)$ for the
projection $\pi:\Sigma_c\times \Pic_d(\Sigma_c)\to\Pic_d(\Sigma_c)$ to the second factor. By 
a theorem of Grauert \cite[III, Cor 12.9]{Har} their constant dimension implies
they form a vector bundle over $\Pic_d(\Sigma_c)$. 
In particular, for $d=d_1+d_2-4(g-1)$ this bundle has rank
\[
h^1(K^{-2}(D_1+D_2)) = 5g-5 -d_1-d_2.
\]
The pairs $(D_1,D_2)$ are parametrised by the product of symmetric products $S^{d_1}\Sigma_c\times S^{d_2}\Sigma_c$ (in
which the co-prime pairs occupy an open subvariety).  The bundle can be pulled back along the holomorphic map
\[
S^{d_1}\Sigma_c\times S^{d_2}\Sigma_c\to \Pic_d(\Sigma_c);\quad (D_1,D_2)\mapsto K^{-2}(D_1+D_2),
\]
and the total space of the pullback parametrises the data $(D_1,D_2,\xi)$. It is a connected non-singular 
complex manifold of dimension $5g-5$. 

As $c$ varies over the Teichm\"{u}ller space $\caT_g$ of $\Sigma$ we can take the disjoint union
$\caC_g=\cup_{c\in\caT_g}\Sigma_c$, and likewise for any of the objects $S^d\Sigma_c$ or $\Pic(\Sigma_d)$ above.
In each of these cases we obtain a \emph{complex analytic family over $\caT_g$}, i.e., the total space is a
complex manifold for which the projection onto $\caT_g$ is holomorphic map, and although only a fibre bundle in the 
smooth category the fibre over $c$ is a complex submanifold biholomorphic to the structure determined
by $c$. In particular, 
$\pi_\caC:\caC_g\to\caT_g$ is called the \emph{(universal) Teichm\"{u}ller curve}.
About each point $\caC_g$
has a \emph{permanent uniformising local parameter}, i.e., a complex chart $(\caU,\fz)$ for which
$\fz=(z_1,\ldots,z_{3g-3},\zeta)$ has the properties that: (i) each non-empty intersection $\caU_c=\pi_\caC^{-1}(c)\cap\caU$ 
is such that $(\caU_c,\zeta)$ is a chart on $\Sigma_c$; (ii) the coordinates $z_j$ are constant on the fibres.
The existence of such a chart is an immediate consequence of Bers' construction of $\caC_g$ as a quotient 
of an open submanifold $\caF_g\subset \caT_g\times\C$ by a properly discontinuous action of $\pi_1\Sigma$ which preserves the
fibes over $\caT_g$ \cite{Ber73} (this is also just the standard picture of Kodaira-Spencer for unobstructed deformations of
complex structure \cite{Kod}). 
It follows that one can put complex charts on the symmetric fibre-products of $\caC_g$ over $\caT_g$, whose fibres are
$S^d\Sigma_c$, to obtain non-singular complex analytic families over $\caT_g$. The corresponding families of Picard 
components have been constructed by Earle \cite{Ear}.

Thus for each pair $d_1,d_2$ satisfying \eqref{eq:stab}
we obtain a manifold parametrising the data $(\Sigma_c,D_1,D_2,\xi)$ with $\deg(D_j)=d_j$. Each 
is clearly a connected non-singular complex manifold of dimension $8g-8$. Therefore we have proved the following
theorem.
\begin{thm}
Each set $\caV(d_1,d_2)$ can be given the structure of a non-singular complex manifold of dimension $8g-8$. With this
structure $\caV(d_1,d_2)$ is complex analytic family over Teichm\"{u}ller space $\caT_g$.  The fibre 
over $c\in\caT_g$ is a complex submanifold biholomorphic to a holomorphic vector bundle over 
$S^{d_1}\Sigma_c\times S^{d_2}\Sigma_c$ of rank $5(g-1)-d_1-d_2$.
\end{thm}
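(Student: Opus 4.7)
The plan is to leverage the bijective correspondence established in Theorems \ref{thm:minimal} and \ref{thm:xi}, which parametrises $\caV(d_1,d_2)$ by quadruples $(\Sigma_c, D_1, D_2, \xi)$ with $\xi \in H^1(\Sigma_c, K^{-2}(D_1+D_2))$. It thus suffices to endow the space of such quadruples with the structure of a complex analytic family over $\caT_g$ and verify the dimension and fibre assertions.

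First I would work over a fixed conformal structure $c \in \caT_g$. The stability inequalities \eqref{eq:stab} ensure that $\caL = K^{-2}(D_1+D_2)$ has strictly negative degree $d = d_1+d_2-4(g-1)$, so $h^0(\caL) = 0$ and by Riemann--Roch
\[
h^1(\caL) = g-1-\deg(\caL) = 5g-5-d_1-d_2,
\]
independent of the choice of $(D_1,D_2)$. Over $\Sigma_c \times \Pic_d(\Sigma_c)$ one has a Poincar\'e line bundle $\caP$; constancy of $h^1$ combined with Grauert's theorem \cite[III, Cor 12.9]{Har} forces $R^1\pi_*\caP$ to be locally free of rank $5g-5-d_1-d_2$ on $\Pic_d(\Sigma_c)$. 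Pulling this bundle back along the holomorphic map
\[
S^{d_1}\Sigma_c \times S^{d_2}\Sigma_c \to \Pic_d(\Sigma_c),\quad (D_1,D_2) \mapsto K^{-2}(D_1+D_2),
\]
realises the fibre $\caV_c(d_1,d_2)$ as a rank $5(g-1)-d_1-d_2$ holomorphic vector bundle over $S^{d_1}\Sigma_c \times S^{d_2}\Sigma_c$, of total complex dimension $5g-5$.

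The main step is to globalise the construction over Teichm\"uller space. I would use the universal Teichm\"uller curve $\pi_\caC: \caC_g \to \caT_g$ from Bers' construction \cite{Ber73}, which carries permanent uniformising local parameters and is therefore a non-singular complex analytic family over $\caT_g$. The symmetric fibre powers $S^{d_j}\caC_g \to \caT_g$ inherit this structure by taking invariants of the relevant symmetric group actions in the permanent parameters, and the relative Picard variety is a non-singular complex analytic family over $\caT_g$ by Earle \cite{Ear}. A relative version of the Grauert argument above, applied to the Poincar\'e line bundle over the fibre product $S^{d_1}\caC_g \times_{\caT_g} S^{d_2}\caC_g$, then assembles the fibres $\caV_c(d_1,d_2)$ into a holomorphic vector bundle whose total space parametrises all quadruples $(\Sigma_c,D_1,D_2,\xi)$. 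The dimension count
\[
(3g-3) + (d_1+d_2) + (5g-5-d_1-d_2) = 8g-8
\]
then follows.

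The hardest step is to verify that the complex structure built this way on the parameter space of quadruples agrees with a natural complex structure on $\caV(d_1,d_2)$ itself, i.e., that the bijection from Theorem \ref{thm:minimal} is biholomorphic in families. This reduces to showing that the extension-class reconstruction of $V$ (and hence the Higgs bundle $(E,\Phi) = (V\oplus 1, (\Phi_1,\Phi_2))$) from $\xi$ depends holomorphically on $(\Sigma_c,D_1,D_2,\xi)$. That in turn follows because the short exact sequence \eqref{eq:V}, together with the canonical identifications \eqref{eq:ident}, produces $(V,\Phi_1,\Phi_2)$ directly from $\xi$ by a \v{C}ech construction in the permanent parameters; since the resulting Higgs bundle data is holomorphic in the parameters, and the passage from stable Higgs bundles to pairs $(\rho,f)$ via the Hermitian-Einstein theorem is holomorphic on the moduli level, the claimed biholomorphism follows, completing the proof.
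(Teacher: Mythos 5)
Your proposal is correct and follows essentially the same route as the paper: fibrewise Riemann--Roch plus Grauert applied to the Poincar\'e bundle over $\Pic_d(\Sigma_c)$, pullback along $(D_1,D_2)\mapsto K^{-2}(D_1+D_2)$, and globalisation over $\caT_g$ via Bers' Teichm\"uller curve and Earle's relative Picard families. The only difference is your final paragraph on compatibility of complex structures, which the paper sidesteps by simply declaring the parameter space's structure to be the structure on $\caV(d_1,d_2)$.
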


\subsection{The structure of $\caW_\tau$}\label{subsec:caW}
With $G=PU(2,1)$ let $\caH(\Sigma_c,G)_\tau$ denote the connected component of Higgs bundles with Toledo invariant $\tau$.
From \S 5 we note that for each $c\in\caT_g$ the subspace $\caW_\tau(c)\subset\caW_\tau$ of maps for which $c$ is fixed
is identifiable with the space of all stable length two Hodge bundles in $\caH(\Sigma_c,G)_\tau$. Since we are only
considering irreducible representations, $|\tau|<2(g-1)$. 
As Gothen \cite{Got} points out, stable length two Hodge bundles can be identified with $\alpha$-stable holomorphic
triples of the form $(V,K^{-1},\Phi_1)$ with $\alpha=2(g-1)$ (and $\deg(V)=-\tfrac32\tau$). There are several places 
in the literature where spaces of such 
objects have been studied, but the most convenient description for our purposes is \cite{BraGPG}. As a corollary of their results
(in particular, \cite[Theorem A]{BraGPG}) we have the following. 
\begin{thm}\label{thm:caW}
The set $\caW_\tau$ has the natural structure of a smooth complex manifold of dimension $9(g-1)-\tfrac32\tau$. The subset
$\caW_\tau(c)$ is a smooth analytic subvariety birational to a $\CP^N$-bundle over $\Jac(\Sigma_c)$, where $N=5(g-1)-\tfrac32\tau-1$.
\end{thm}
\begin{proof}
According to \cite{BraGPG} a holomorphic triple $(E_1,E_2,\varphi)$, where $\varphi:E_2\to E_1$ with $\rk(E_1)=2$ and
$\rk(E_2)=1$, corresponds to a stable $U(2,1)$-Higgs bundle $(E_1\oplus E_2,\varphi)$ when the holomorphic triple is
$\alpha$-stable for $\alpha=2(g-1)$. The moduli space of such triples is, in their notation, $\caN^s_\alpha(2,1,k_1,k_2)$
where $k_j=\deg(E_j)$. For our situation $k_1=-\tfrac32\tau$ and $k_2=2(1-g)$. By \cite[Theorem A]{BraGPG} this is a smooth, 
irreducible variety of dimension $7(g-1)-\tfrac32\tau +1$ and it is birationally equivalent to a $\CP^N$-bundle over 
$\Pic_{k_1-k_2}(\Sigma_c)\times\Pic_{k_2}(\Sigma_c)$ which we will denote $\caN^s_\ell$. This latter space is the 
space of \emph{all} triples $(E_1,E_2,\varphi)$ which can be constructed as extensions of the form
\[
0\to E_2\stackrel{\varphi}{\to} E_1\to F\to 0,
\]
where $F$ is a line bundle of degree $k_1-k_2$. The fibres of $\caN^s_\ell$ are the projective spaces of $H^1(\Sigma_c,
F^{-1}\otimes E_1)$ which parametrise such extensions. The index $\ell$ is sufficiently large that these are all $\ell$-stable and 
$\ell \geq 2(g-1)$ so that all $\alpha$-stable triples are $\ell$-stable. 
The birational map $\caF:\caN^s_\alpha\to \caN^s_\ell$ is the map which recognizes the $\ell$-stability of all
$\alpha$-stable triples which arise from such an extension (in particular, it is defined on the open set
of those triples for which $\varphi$ is a nowhere zero section of $\Hom(E_2,E_1)$). 

Now the abelian variety $\Jac(\Sigma_c)$ acts
freely on triples by $(E_1,E_2,\varphi)\mapsto (E_1\otimes L,E_2\otimes L,\varphi)$ and the birational map $\caF$ is
equivariant for this action.  Triples of the form $(V,K^{-1},\Phi_1)$ are identified with points in the quotient
$\caQ_\alpha(c)=\caN^s_\alpha/\Jac(\Sigma_c)$, which is smooth since the action is free and proper. The 
quotient is therefore birationally equivalent to a $\CP^N$-bundle $\caQ_\ell(c)$ over
\[
(\Pic_{k_2}(\Sigma_c)\times\Pic_{k_1-k_2}(\Sigma_c))/\Jac(\Sigma_c)\simeq \Jac(\Sigma_c).
\]
This bundle has dimension $6(g-1)-\tfrac32\tau$. 
Now, we can identify 
\[
\caW_\tau = \cup_{c\in\caT_c}\caW_\tau(c)
\]
with an analytic subvariety of the ``universal'' $G$-Higgs bundle moduli space $\caH(\caC_g,G))$ for $G=PU(2,1)$. 
Here $\caC_g$ is the universal Teichm\"{u}ller curve over $\caT_g$ and $\caH(\caC_g,G)$ is the complex analytic family
over $\caT_g$ whose fibre at $c$ is $\caH(\Sigma_c,G)$, the moduli space of $G$-Higgs bundles over $\Sigma_c$ (a
construction for $\caH(\caC_g,G)$ can be found in \cite[Thm 7.5]{AleC}). The space $\caW_\tau$ corresponds to the stable
$PU(2,1)$-Higgs bundles with $\Phi_2=0$. This total space has dimension $9(g-1)-\tfrac32\tau$.
\end{proof}
Note that from Theorem \ref{thm:hol} the Toledo invariant places an upper bound on the degree $b$ of the branch divisor $B$ 
of the map $f$: $0\leq b<2(g-1)-\tau$. One can think of $\caW_\tau$ as stratified by subvarieties corresponding to maps
with degree of branching $b$. 
 
\subsection{Map from $\caV$ to $\caR(G)$.}\label{sec:R(G)}
In order to understand when we can use minimal surface data to parametrise representations, we must understand the map
\begin{equation}\label{eq:R}
R:\caV\to\caR(G),\quad (\rho,f)\mapsto \rho.
\end{equation}
We can expect this to be smooth. From the results above, this is likely to be most interesting on the components
$\caV(d_1,d_2)$ since these have the same dimension as $\caR(G)$. While a full understanding of this map will require
further work, we can at least make some interesting comments about its behaviour on the fibres $\caV_c$ of $\caV$ over
Teichm\"{u}ller space. With a fixed conformal structure $c$ we can identify $\caR(G)$ with the moduli space
$\caH(\Sigma_c,G)$ of $G$-Higgs bundles. Then $R$ is injective on $\caV_c$, since it amounts to inclusion (equally, this is a
consequence of the uniqueness theorem for twisted harmonic maps \cite{Cor,Don}). Indeed
\[
\caV_c = \{(E,\Phi)\in \caH(\Sigma_c,G): \tr\Phi^2=0\},
\]
and so it plays the role of the \emph{nilpotent cone} in $\caH(\Sigma_c,G)$. Let us consider the structure of this in light of
the discussion above. Recall that $\|\Phi\|_{L^2}^2$ is a proper Morse-Bott function on $\caH(\Sigma_c,G)$ (at least at
smooth points): we will normalise this by defining
\[
\fF(E,\Phi)= \frac{i}{2}\int_\Sigma\tr(\Phi\wedge\Phi^\dagger). 
\]
Whenever the twisted harmonic map determined by $(E,\Phi)$ is conformal, we have 
\[
\fF(E,\Phi)=\int_\Sigma v_\gamma= \Area_\gamma(\Sigma),
\]
for the induced metric $\gamma$. Now fix a non-maximal value $\tau$ for the Toledo invariant and consider the connected component
$\caH(\Sigma_c,G)_\tau$. Whenever $d_2=\tfrac32\tau +d_1$ this component contains $\caV_c(d_1,d_2)$. Inside the latter lies
the locus $\xi=0$ consisting of length-three Hodge bundles, and this represents one connected critical manifold of $\fF$
(cf.\ \cite[\S 3]{Got}). Since $\xi=0$
exactly when $\caQ=0$ we deduce from Prop.\ \ref{prop:area} that this is the level $\fF=(4g-4-d_1-d_2)\pi$. As we move along
the fibres of the bundle $\caV_c(d_1,d_2)$ Prop.\ \ref{prop:area} tells us that $\fF <(4g-4-d_1-d_2)\pi$. Moreover, a
comparison with \cite[Prop 3.2]{Got} shows that the
dimension of these fibres equals the Morse index of the critical manifold. Indeed, we conjecture that the
bundle $\caV_c(d_1,d_2)$ is precisely the unstable manifold of the
vector field $-\grad\caF$ as a bundle over the critical manifold.  This seems to be a manifestation of Hausel's
theorem \cite[Thm 5.2]{Hau}. He proved that in the moduli space of stable $GL(2,\C)$-Higgs bundles of odd degree, 
the downward Morse
flow coincides with the nilpotent cone. Although Hausel only gave the proof for $GL(2,\C)$, the ingredients hold
equally well in the case of the smooth components of $\caH(\Sigma_c,G)$ for a real form $G$ \cite{Gotprivate}.  

By contrast the image of $\caW_\tau(c)$ in $\caH(\Sigma_c,G)_\tau$ gives the absolute minima of $\caF$ in
that connected component \cite{Got}. On the smooth locus of $\caH(\Sigma_c,G)$, hence particularly when $\tau\neq 2\Z$, there is a single 
connected component of minima for fixed $\tau$ (since $\caF$ is a perfect Morse-Bott function).

\appendix

\section{Uniqueness of minimal embeddings.}\label{sec:unique}

Here we give the proof of Proposition \ref{prop:unique}. We are assuming that $\exp^\perp:TM^\perp\to N$ is a diffeomorphism,
and therefore there is a radial distance function $\rho:N\to \R^+_0$ given by $\rho(p)=\|(\exp^\perp)^{-1}(p)\|$. 
The idea of the proof is to show that the condition on $d v_r/dr$ means each 
$\varphi_r=\exp^\perp(r\eta)$ must have non-zero mean curvature, and that by local comparison every immersion must also 
have non-zero mean curvature at non-zero maximum values of $\rho$. Note that this is a local argument: 
we do not need the existence of global sections of $TM^\perp$ of unit length (which will not, in general, exist).

First recall that for a family of immersions $\varphi_t:M\times\R\to (N,g)$ with 
variational vector field $V = \varphi_*\partial/\partial_t$ a standard calculation gives
\[
\frac{d\vol_\gamma(t)}{dt}|_0 = d\star g(V,d\varphi) - g(V,H_\varphi)\vol_\gamma,
\]
where $H_\varphi=\tr_\gamma\II_\varphi$ is the mean curvature for $\gamma=\varphi^*g$.
In particular, for the mean curvature $H_r$ of the map $\varphi_r$, 
\begin{equation}\label{eq:H}
d\rho(H_r) = g(\grad\rho,H_r) = -\frac{1}{\vol_r}\frac{d\vol_r}{dr},
\end{equation}
since $\grad\rho$ is a normal variation.

Next we show that a local embedding $\varphi$ which comes from an arbitrary non-vanishing local section 
$\nu$ of $TM^\perp$ must have non-zero mean curvature at any point at which $|\nu|$ has a local 
maximum.
\begin{lem}\label{lem:nonmin}
Let $\varphi:\caU\to N$ be an embedding of the form $\varphi=\exp^\perp(\nu)$ for some local 
section $\nu$ of $TM^\perp$ which does not vanish on an open subset $\caU\subset M$,
and suppose $u=\|\nu\|=\rho\circ\varphi$ has a local maximum at $x\in \caU$.  For each $r>0$ set $\varphi_r
= \exp^\perp(r\nu/u)$, and let $\vol_r$ be the volume form for $\varphi_r^*g$. Suppose that
$d\vol_r/dr>0$ at $x$ for each $r$, then $\varphi$ must have non-zero mean curvature $H_\varphi$ at $x$. 
\end{lem}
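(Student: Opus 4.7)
The plan is to compare $\varphi$ with the fixed-distance map $\varphi_{u(x)}$ at the point $x$. The hypothesis on $dv_r/dr$ gives information about $H_{\varphi_{u(x)}}$ directly via \eqref{eq:H}, and I will transfer that information to $H_\varphi$ by exploiting the fact that the two immersions are first-order tangent at $x$.

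The first step is the tangency observation: $\varphi(x)=\varphi_{u(x)}(x)$ and $d\varphi|_x=d\varphi_{u(x)}|_x$. The equality of images is immediate from $u(x)\cdot(\nu(x)/u(x))=\nu(x)$. For the differentials, I would write $\nu=u\hat\nu$ with $\hat\nu=\nu/u$ of unit length; since $x$ is a critical point of $u$, the product rule gives $d\nu|_x=u(x)\,d\hat\nu|_x=d(u(x)\hat\nu)|_x$ in $T_{\nu(x)}TM^\perp$, and applying $d\exp^\perp$ gives the claim. In particular the induced metrics $\gamma$ and $\varphi_{u(x)}^*g$ at $x$ coincide, as do the pullbacks of $\Hess(\rho)$.

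Next I would apply the standard Laplacian-composition identity
\[
\Delta_{\psi^*g}(\rho\circ\psi) = \tr_{\psi^*g}\Hess(\rho)(d\psi,d\psi) + g(\grad\rho,H_\psi)
\]
to $\psi=\varphi$ and $\psi=\varphi_{u(x)}$. For $\psi=\varphi_{u(x)}$ the left-hand side vanishes since $\rho\circ\varphi_{u(x)}\equiv u(x)$ is constant, while for $\psi=\varphi$ it equals $\Delta_\gamma u|_x$. Because of the tangency step the Hessian trace terms agree at $x$, so subtracting the two identities yields
\[
\Delta_\gamma u|_x = g(\grad\rho,H_\varphi)|_x - g(\grad\rho,H_{\varphi_{u(x)}})|_x.
\]

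The conclusion then follows from two inequalities. Since $u$ has a local maximum at $x$, the weak maximum principle gives $\Delta_\gamma u|_x\leq 0$. Since $\grad\rho$ is the unit radial normal to the fibres of $\exp^\perp$, the hypothesis $d\vol_r/dr>0$ at $x$ combined with \eqref{eq:H} (applied to $\eta=\nu/u$) gives $g(\grad\rho,H_{\varphi_{u(x)}})|_x<0$. Together these force $g(\grad\rho,H_\varphi)|_x<0$, so in particular $H_\varphi(x)\neq 0$. The only genuine subtlety is the tangency step establishing $d\varphi|_x=d\varphi_{u(x)}|_x$; once that is in hand the rest is a direct manipulation of the composition formula and the hypotheses.
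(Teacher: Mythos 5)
Your proof is correct and follows essentially the same route as the paper: the paper likewise notes that $du|_x=0$ forces $d\varphi|_x=d\varphi_{u(x)}|_x$ and $\gamma|_x=\gamma_{u(x)}|_x$, applies the tension-field composition formula to $\rho\circ\varphi$ and $\rho\circ\varphi_{u(x)}$, and subtracts to get $d\rho(H_\varphi)|_x=\tr_\gamma\Hess(u)|_x+d\rho(H_{u(x)})|_x<0$. The only cosmetic difference is that you spell out the first-order tangency argument in more detail than the paper does.
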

\begin{proof}
Consider the expressions for the mean curvatures $H_\varphi$ and $H_r$, considered as the 
tension fields $\tau(\varphi)$ and $\tau(\varphi_r)$, in terms the tension fields for $u=\rho\circ\varphi$ and
the constant function $r=\rho\circ\varphi_r$. The composition formulas \cite[2.20]{EelL} give
\begin{eqnarray*}
\tau(u) &=& d\rho(H_\varphi) + \tr_\gamma\nabla d\rho(d\varphi,d\varphi),\\
0=\tau(\rho\circ\varphi_r) &=& d\rho(H_r) + \tr_{\gamma_r}\nabla d\rho(d\varphi_r,d\varphi_r),
\end{eqnarray*}
where $\gamma = \varphi^*g$ and $\gamma_r=\varphi_r^*g$.
Now $\tau(u) = \tr_\gamma\Hess(u)$ and at the local maximum $x$ we have $du=0$, which implies
$d\varphi=d\varphi_{u(x)}$ and $\gamma=\gamma_{u(x)}$.
Therefore at $x$ we have
\[
d\rho(H_\varphi)|_x = \tr_\gamma\Hess(u)|_x + d\rho(H_{u(x)})|_x.
\]
Since $x$ is a local maximum we have $\tr_\gamma\Hess(u)|_x\leq 0$, and 
by assumption $d\rho(H_r)|_x <0$ for all $r>0$, using \eqref{eq:H}. Thus $H_\varphi$ cannot vanish at $x$.
\end{proof}
\begin{proof}[Proof of Prop \ref{prop:unique}]
Suppose $\psi:M\to N$ is any immersion transverse to the fibres of $\exp^\perp$, other than $f$. The function $\rho\circ\psi$ 
must have a non-zero maximum at some $y\in M$. 
Then there is a local section $\nu$ of $TM^\perp$ and a local diffeomorphism $\alpha$ on $M$
for which $\psi =\varphi\circ\alpha$ where $\varphi= \exp^\perp(\nu)$, and 
$H_\psi|_y =  H_\varphi|_{\alpha(y)}$ as elements of $T_{\psi(y)}N$.
Now $(\varphi,\alpha(y))$ satisfy the conditions of Lemma \ref{lem:nonmin}, so $H_\psi|_y\neq 0$.  
\end{proof}

\end{document}